\numberwithin{equation}{section}
\newtheorem{theorem}{Theorem}[section]
\newtheorem{lemma}{Lemma}[section]
\newtheorem{remark}{Remark}[section]
\newtheorem{definition}[theorem]{Definition}
\newtheorem*{SAV-CN}{SAV-CN}
\newtheorem*{ESAV-CN}{ESAV-CN}
\newtheorem*{ESAV-Gauss}{ESAV-Gauss}
\newtheorem*{ESAV-Gauss-PC}{ESAV-Gauss-PC}
\begin{document}
\begin{frontmatter}
\title{Arbitrary high-order linearly implicit energy-preserving algorithms for Hamiltonian PDEs\vspace{-1mm}}
\author{Yonghui Bo, Yushun Wang and Wenjun Cai$^*$}
\address{Jiangsu Key Laboratory for NSLSCS,\\ Jiangsu Collaborative Innovation Center of Biomedial Functional Materials,\\ School of Mathematical Sciences, Nanjing Normal University, Nanjing 210023, China\vspace{-9mm}}
\begin{abstract}

In this paper, we present a novel strategy to systematically construct linearly implicit energy-preserving schemes with arbitrary order of accuracy for Hamiltonian PDEs. Such novel strategy is based on the newly developed exponential scalar variable (ESAV) approach that can remove the bounded-from-blew restriction of nonlinear terms in the Hamiltonian functional and provides a totally explicit discretization of the auxiliary variable without computing extra inner products, which make it more effective and applicable than the traditional scalar auxiliary variable (SAV) approach. To achieve arbitrary high-order accuracy and energy preservation, we utilize the symplectic Runge-Kutta method for both solution variables and the auxiliary variable, where the values of internal stages in nonlinear terms are explicitly derived via an extrapolation from numerical solutions already obtained in the preceding calculation. A prediction-correction strategy is proposed to further improve the accuracy. Fourier pseudo-spectral method is then employed to obtain fully discrete schemes. Compared with the SAV schemes, the solution variables and the auxiliary variable in these ESAV schemes are now decoupled. Moreover, when the linear terms are of constant coefficients, the solution variables can be explicitly solved by using the fast Fourier transform. Numerical experiments are carried out for three Hamiltonian PDEs to demonstrate the efficiency and conservation of the ESAV schemes.	
\end{abstract}

\begin{keyword}
Hamiltonian PDEs; energy-preserving method; linearly implicit method; high-order; exponential scalar variable approach
\end{keyword}
\end{frontmatter}
\begin{figure}[b]
\small \baselineskip=10pt
\rule[2mm]{1.8cm}{0.2mm} \par
$^{*}$Corresponding author.\\
E-mail address: caiwenjun@njnu.edu.cn (W. Cai).
\end{figure}
\vspace{-3mm}
\section{Introduction}
In this paper, we consider Hamiltonian PDEs of the form
\begin{equation}\label{eq-1-1}
z_t=\mathcal{D}\frac{\delta \mathcal{H}}{\delta z},
\end{equation} 
where $\mathcal{D}$ is a skew-adjoint operator and independent of the solution variable $z(x,t)$ and its derivatives, $(x,t) \in \Omega\times[0,T]$ and $\Omega \in \mathcal{R}^d$. Many classical conservative systems can be recast into the above Hamiltonian system, such as the nonlinear Schr\"odinger (NLS) equation, the sine-Gordon (SG) equation, the Korteweg-de Vries (KdV) equation and so on \cite{leimkuhler-04-SHD}. One of the most fundamental properties of  Hamiltonian PDEs \eqref{eq-1-1} is the conservation of energy, i.e., the energy functional $\mathcal{H}$ is constant along the continuous flow. Nowadays, numerical schemes that can preserve a discrete counterpart of the energy are more preferable than other non-conservative schemes, especially in the long-term simulations.

Fully implicit schemes play a dominant role in the early development of energy-preserving methods for Hamiltonian PDEs \eqref{eq-1-1}. When the energy functional has a quadratic form, any symplectic Runge-Kutta (RK) method can achieve a natural energy conservation \cite{cooper-87-RK-IMA,hairer-06-GNI-ODE}, with arbitrary high-order accuracy by increasing the internal RK stages. For energies of general forms, the discrete gradient methods \cite{harten-83-DG-SIAM-REV,itoh-88-VDQ-JCP,gonzalez-96-DH-JNS,mcLachlan-99-DG-PTRSLA} provide a novel framework to construct energy-preserving schemes, among which the most popular one is the averaged vector field (AVF) method \cite{quispel-08-AVF-JPAMT} as well as a series of subsequent extensions \cite{wu-13-oscil-JCP,cai-18-PAVF-JCP}. Such ideas are later generalized to develop the discrete variational derivative (DVD) methods \cite{furihata-99-DVD-JCP,matsuo-01-DC-JCP,furihata-11-DVD-CHCRC}. As the increasing demand of high-precision simulations, high-order energy-preserving methods emerge in recently years, such as high-order AVF methods \cite{hairer-10-coll-JNAIAM,cohen-11-lin-poisson-BIT,Li-16-AVF-JCM}, Hamiltonian boundary value methods (HBVMs) \cite{luigi-10-HBVM-JNAIAM,luigi-12-Poisson-JCAM,luigi-19-HBVM-KDV-JCAM}, time finite element methods \cite{betsch-00-time-FEM-JCP,tang-12-time-FEM-AMC} and so on. Nevertheless, all the methods mentioned above are fully implicit for general Hamiltonian PDEs and nonlinear iterations are required, which make them time consuming and less attractive in practical computations.

One alternative way to incorporate the energy-preserving property and computational efficiency is to construct linearly implicit schemes, which only involve a linear system to be solved at each time step. For polynomial energies, the multiple DVD method \cite{matsuo-01-DC-JCP} and a more general framework based on the polarization technique are proposed \cite{dahlby-11-general-IP-SIAMJSC}. However, there are no methods can conserve energy of arbitrary forms until the development of the energy quadratization approach, named the invariant energy quadratization (IEQ) \cite{yang-16-IEQ-JCP,yang-17-IEQ-JCP,gong-18-IEQ-binary-fluid-SIAMJSC} and the scalar auxiliary variable (SAV) \cite{shen-18-SAV-JCP,shen-19-SIAM-REV,qiao-19-PR-SAV-CICP} approach respectively. Although these two approaches are first proposed for gradient flow models, they have been successfully applied to various conservative systems, including Hamiltonian systems \eqref{eq-1-1} \cite{cai-19-SG-NB-JCP,cai-20-linear-MS-JCP,jiang-19-SG-IEQ-JSC,jiang-20-CH-SAV-JSC}. The basic idea of such approaches is to reformulate the original energy into a quadratic form by introducing a new variable. Further utilizing the extrapolation technique for nonlinear terms, linearly implicit energy-preserving schemes can be systematically constructed. The most popular one is the Crank-Nicolson scheme with extrapolation that has second-order accuracy. High-order linearly implicit schemes can be constructed based on the symplectic RK methods and high-order extrapolations \cite{akrivis-19-RK-SAV-SIAMJSC,gong-20-high-stable-JCP}. Although the resulting schemes under the framework of the IEQ and SAV approaches are both linearly implicit, their major difference occurs in the coefficient matrix of the linear system. The former is solution-dependent while the latter is constant so that fast solvers can be employed, which makes the SAV approach more efficient. 

Nevertheless, in the implementation of the SAV schemes, the calculation of solution variables and the auxiliary variable can not be decoupled. Moreover, extra inner products must be applied previously to obtain the solution variables, which would become more complicated for high-order SAV schemes
\cite{akrivis-19-RK-SAV-SIAMJSC}. To overcome theses shortcomings, the exponential scalar auxiliary variable (ESAV) approach \cite{liu-20-ESAV-SIAMJSC} is proposed very recently for phase field models. The computation of the solution variables and the auxiliary variable is totally decoupled and can be explicitly solved step-by-step if fast Fourier transform \cite{gong-17-FP-NLS-JCP,shen-11-spectral} is utilized for the linear system. In this paper, we extend the idea of the ESAV approach to conservative Hamiltonian systems \eqref{eq-1-1} and generalize it to construct linearly implicit energy-preserving schemes of arbitrary order of accuracy. We first reform the original system into an equivalent form by the ESAV approach, where an exponential auxiliary variable is introduced. Different from the SAV schemes, such reformation allows the approximation of entire nonlinear terms by extrapolation and still mains the energy conservation, whereas the auxiliary variable in SAV schemes has to be discretized implicitly. A second-order Crank-Nicolson scheme with extrapolation is given to illustrate the strategy of discretization, as well as the explicit calculation of both solution variables and the auxiliary variable. Subsequently, we generalize the scheme to arbitrary high order by the symplectic RK method and an extrapolation from numerical solutions already obtained from the preceding calculation. A prediction-correction strategy different from that in \cite{gong-20-high-stable-JCP} is proposed to further improve the accuracy. Rigorous proofs of the discrete energy conservation law are presented. Taking the NLS equaion, the SG equation and the KdV equation for examples, we numerically test the resulting ESAV schemes in accuracy, energy preservation. Comparisons with the classical SAV schemes are also carried out.

The rest of this paper is organized as follows. In Section \ref{Sec-2}, we first separate the Hamiltonian PDEs \eqref{eq-1-1} into linear and nonlinear terms and apply the standard SAV approach and the ESAV approach to obtain two kinds of equivalent forms. Then the Crank-Nicolson scheme equipped with extrapolation is used to construct second-order linearly implicit schemes, where the energy conservation properties and the detailed implementations are given, respectively. In Section \ref{Sec-3}, based on the symplectic RK method we first propose high-order fully implicit energy-preserving schemes under the framework of the ESAV approach. An extrapolation technique is thereafter employed for nonlinear terms to achieve linearly implicit schemes. A prediction-correction strategy is reported subsequently to further improve the accuracy. Section \ref{Sec-4} briefly reviews the pseudo-spectral method for the spatial discretization. Ample numerical examples are provided in Section \ref{Sec-5} to test their effectiveness. Finally, concluding remarks are drawn in Section \ref{Sec-6}.

\section{Scalar auxiliary variable approaches}\label{Sec-2}
To apply the scalar auxiliary variable approaches, we first separate the Hamiltonian functional $\mathcal{H}$ into linear and nonlinear terms, named $\mathcal{H}_1$ and $\mathcal{H}_2$, respectively, and subsequently the Hamiltonian system \eqref{eq-1-1} becomes
\begin{equation}\label{eq-2-1}
\begin{aligned}
z_t&=\mathcal{D}\mu,\\
\mu&=\mathcal{L}z+\mathcal{N}^\prime(z),
\end{aligned}
\end{equation} 
where $\delta\mathcal{H}_1/\delta z=\mathcal{L}z$, $\mathcal{L}$ is a symmetric non-negative linear operator, $\delta\mathcal{H}_2/\delta z=\mathcal{N}^\prime(z)$, $\mathcal{N}(z)$ is the energy density. Taking the inner product of \eqref{eq-2-1} with $\mu$ and $-z_t$ respectively and adding them together, we obtain the energy conservation law
\begin{equation}\label{eq-2-2}
\frac{d}{dt}\mathcal{H}\big(z\big)=0,\quad \mathcal{H}(z)=\frac{1}{2}\big(z,\mathcal{L}z\big)+\big(\mathcal{N}(z),1\big)=\mathcal{H}_1(z)+\mathcal{H}_2(z),
\end{equation} 
where $(\cdot,\cdot)$ denotes the normal $L^2$ inner product on the spatial region $\Omega$.

\subsection{Standard scalar auxiliary variable approach}
The SAV method is widely used by transforming $\mathcal{H}_2(z)$ into a simple quadratic form, which makes the nonlinear term $\mathcal{N}'(z)$ much easier to handle. We introduce  a scalar auxiliary variable $w(t)=\sqrt{\mathcal{H}_2(z)+C_0}$ with the assumption that $\mathcal{H}_2(z)$ is bounded from below \cite{shen-18-SAV-JCP}. This means that the constant $C_0$ enables $\mathcal{H}_2(z)+C_0$ greater than zero. Then, the system \eqref{eq-2-1} can be rewritten in the equivalent form
\begin{align}\label{eq-2-3}
\aligned
&z_t=\mathcal{D}\mu,\\
&\mu~=\mathcal{L}z+A(z)w,\quad A(z)=\frac{\mathcal{N}'(z)}{\sqrt{\mathcal{H}_2(z)+C_0}},\\
&w_t=\frac{1}{2}\big(A(z),z_t\big).
\endaligned
\end{align}
The above system satisfies the modified energy conservation law
\begin{equation}\label{eq-2-4}
\frac{d}{dt}\bar{\mathcal{H}}(t)=0,\quad\bar{\mathcal{H}}(t)=\frac{1}{2}\big(z,\mathcal{L}z\big)+w^2
\end{equation} 
by taking the inner product of \eqref{eq-2-3} with $\mu$, $z_t$ and $2w$, respectively, and summing them together.

Utilizing the Crank-Nicolson method in time coupled with an explicit extrapolation gives a second-order linearly implicit energy-preserving method
\begin{align}\label{eq-2-5}
\aligned
&\frac{z^{n+1}-z^n}{\tau}=\mathcal{D}\mu^{n+\frac{1}{2}},\\
&\mu^{n+\frac{1}{2}}=\mathcal{L}z^{n+\frac{1}{2}}+A(\bar{z}^{n+\frac{1}{2}})w^{n+\frac{1}{2}},\quad A(\bar{z}^{n+\frac{1}{2}})=\frac{\mathcal{N}'(\bar{z}^{n+\frac{1}{2}})}{\sqrt{\mathcal{H}_2(\bar{z}^{n+\frac{1}{2}})+C_0}}\triangleq A^n,\\
&w^{n+1}-w^n=\frac{1}{2}\big(A(\bar{z}^{n+\frac{1}{2}}),z^{n+1}-z^n\big)
\endaligned
\end{align}
with the time step $\tau$, $z^{n+\frac{1}{2}}=\frac{z^n+z^{n+1}}{2}$ and $\bar{z}^{n+\frac{1}{2}}=\frac{3z^n-z^{n-1}}{2}$. We name the scheme \eqref{eq-2-5} \textbf{SAV-CN}. In analogy to the continuous case, taking the inner product of \eqref{eq-2-5} with $\mu^{n+\frac{1}{2}}$, $\frac{z^{n+1}-z^n}{\tau}$ and $w^{n+1}+w^n$, the semi-discrete energy conservation law can be derived as
\begin{equation}\label{eq-2-6}
\bar{\mathcal{H}}^{n+1}=\bar{\mathcal{H}}^n,\quad \bar{\mathcal{H}}^n=\frac{1}{2}\big(z^n,\mathcal{L}z^n\big)+(w^n)^2.
\end{equation} 
In order to convey the computational aspects of \eqref{eq-2-5}, we  eliminate the variables $\mu^{n+\frac{1}{2}}$, $\omega^{n+1}$ and obtain
\begin{equation}\label{eq-2-7}
z^{n+1}=\left(I-\frac{1}{2}\tau\mathcal{D}\mathcal{L}\right)^{-1}C^n+\frac{1}{4}\tau\left(I-\frac{1}{2}\tau\mathcal{D}\mathcal{L}\right)^{-1}\mathcal{D}A^n\left(A^n,z^{n+1}\right),
\end{equation} 
where $C^n=\left(I+\frac{1}{2}\tau\mathcal{D}\mathcal{L}\right)z^n+\frac{1}{4}\tau\mathcal{D}A^n\big[4w^n-(A^n,z^n)\big]$ and $I$ is an identity matrix with appropriate dimensions. Taking the inner product of both sides of \eqref{eq-2-7} with $A^n$ leads to
\begin{equation}\label{eq-2-8}
\left(A^n,z^{n+1}\right)=\frac{\left(A^n,(I-\frac{1}{2}\tau\mathcal{D}\mathcal{L})^{-1}C^n\right)}{1-\frac{1}{4}\tau\left(A^n,(I-\frac{1}{2}\tau\mathcal{D}\mathcal{L})^{-1}\mathcal{D}A^n\right)}.
\end{equation} 
Substituting \eqref{eq-2-8} into \eqref{eq-2-7} gives $z^{n+1}$, and subsequently $w^{n+1}$ is obtained from the third equation of \eqref{eq-2-5}. The advantage of scheme {\bf SAV-CN} is that only two linear systems with constant coefficient need to be solved at each time step, which usually makes it very efficient when fast solvers are available.

\begin{remark}\label{Re-2-1}
One shortcoming of the SAV approach is that the nonlinear term $\mathcal{H}_2$ is assumed to be bounded-from-below such that there exists a constant $C_0$ to make the definition of the auxiliary variable meaningful. However, this assumption is not always satisfied. For example, consider the classic KdV equation \cite{luigi-19-HBVM-KDV-JCAM} which can be written as the form of \eqref{eq-2-1}
\begin{equation}\label{eq-2-9}
\begin{aligned}
z_t&={\partial_x}\mu,\\
\mu&=-\partial_{xx}z-\frac{1}{2}z^2.
\end{aligned}
\end{equation} 
The nonlinear term $\mathcal{H}_2$ is $-\frac{1}{6}(z^3,1)$ and obviously is not bounded-from-below. Consequently, the feasibility of the SAV approach can not be guaranteed, although no problems occur in most of conventional examples of the KdV equation, which will be shown in the following tests.
\end{remark}
To overcome this issue, the ESAV approach is proposed very recently \cite{liu-20-ESAV-SIAMJSC}. The major difference between the SAV and ESAV approaches in the system reformation appears in the definition of the auxiliary variable. The ESAV approach introduce an exponential auxiliary variable without square root such that the restriction of the bounded-from-below is removed. In the following section, we briefly present the idea of the ESAV approach and a related second-order Crank-Nicolson discretization.

\subsection{Exponential scalar auxiliary variable approach}
In the framework of the ESAV approch, let $e(t)=\exp(\mathcal{H}_2)$, the system \eqref{eq-2-1} can be reformulated as the following equivalent system:
\begin{align}\label{eq-2-10}
\aligned
&z_t=\mathcal{D}\mu,\\
&\mu~=\mathcal{L}z+B(z,e),\quad B(z,e)=\frac{\mathcal{N}'(z)e}{\exp\big(\mathcal{H}_2(z)\big)},\\
&\frac{d}{dt}\mbox{ln}(e)=\big(B(z,e),z_t\big).
\endaligned
\end{align}
Taking the inner product of the first two equations in \eqref{eq-2-10} with $\mu$, $z_t$, and combining them with the third equation, we can obtain a  modified energy conservation law of the ESAV reformation
\begin{equation}\label{eq-2-11}
\frac{d}{dt}\tilde{\mathcal{H}}(t)=0,\quad \tilde{\mathcal{H}}(t)=\frac{1}{2}\big(z,\mathcal{L}z\big)+\mbox{ln}(e).
\end{equation} 

\begin{remark}\label{Re-2-2}
Due to the exponential form of the auxiliary variable, the above KdV equation \eqref{eq-2-9} can now be reformulated as \eqref{eq-2-10} without additional assumptions.
\end{remark}

\begin{remark}\label{Re-2-3}
An alternative auxiliary variable $e(t)=\exp\big(\mathcal{H}_2(z)/C_0\big)$ is introduced in some numerical calculations, where $C_0$ is a constant to remove the risk of calculation failure caused by the rapidly increasing exponential function. And $C_0$ is generally chosen to be equal to the absolute value of $\mathcal{H}_2(z^0)$ that can be directly obtained from initial conditions. At this time, the system \eqref{eq-2-10} can be rewritten as
\begin{align}\label{eq-2-12}
\aligned
&z_t=\mathcal{D}\mu,\\
&\mu~=\mathcal{L}z+B(z,e),\quad B(z,e)=\frac{\mathcal{N}'(z)e}{\exp \big(\mathcal{H}_2(z)/C_0\big)},\\
&\frac{d}{dt}\ln(e)=\big(B(z,e),z_t\big)\big/C_0.
\endaligned
\end{align}
with the modified energy $\tilde{\mathcal{H}}(t)=\frac{1}{2}\big(z,\mathcal{L}z\big)+C_0\ln (e)$.
\end{remark} 

Similarly, we can apply the Crank-Nicolson and extrapolation methods to generate a simple second-order method
\begin{align}\label{eq-2-13}
\aligned
&\frac{z^{n+1}-z^n}{\tau}=\mathcal{D}\mu^{n+\frac{1}{2}},\\
&\mu^{n+\frac{1}{2}}=\mathcal{L}z^{n+\frac{1}{2}}+B(\bar{z}^{n+\frac{1}{2}},\bar{e}^{n+\frac{1}{2}}),\quad B(\bar{z}^{n+\frac{1}{2}},\bar{e}^{n+\frac{1}{2}})=\frac{\mathcal{N}'(\bar{z}^{n+\frac{1}{2}})\bar{e}^{n+\frac{1}{2}}}{\mbox{exp}\big(\mathcal{H}_2(\bar{z}^{n+\frac{1}{2}})\big)}\triangleq B^n,\\
&\mbox{ln}(e^{n+1})-\mbox{ln}(e^n)=\big(B(\bar{z}^{n+\frac{1}{2}},\bar{e}^{n+\frac{1}{2}}),z^{n+1}-z^n\big).
\endaligned
\end{align}
where $\bar{z}^{n+\frac{1}{2}}, \bar{e}^{n+\frac{1}{2}}=\frac{3e^n-e^{n-1}}{2}$ are defined as above. Hereafter, the method \eqref{eq-2-13} is abbreviated as \textbf{ESAV-CN}.

\begin{theorem}\label{Th-2-2}
The method \eqref{eq-2-13} is a linear second-order energy-preserving method with the discrete version of the energy as
\begin{equation}\label{eq-2-14}
\tilde{\mathcal{H}}^{n+1}=\tilde{\mathcal{H}}^n,\quad \tilde{\mathcal{H}}^n=\frac{1}{2}\big(z^n,\mathcal{L}z^n\big)+\ln (e^n).
\end{equation} 
\end{theorem}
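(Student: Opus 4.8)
The plan is to reproduce, at the discrete level, the derivation of the continuous modified energy law \eqref{eq-2-11}, and then separately check linearity and the order of consistency.

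\textbf{Energy conservation.} First I would take the $L^2$ inner product of the first equation of \eqref{eq-2-13} with $\mu^{n+1/2}$; since $\mathcal{D}$ is skew-adjoint, $(\mathcal{D}\mu^{n+1/2},\mu^{n+1/2})=0$, so $(z^{n+1}-z^n,\mu^{n+1/2})=0$. Substituting the second equation splits this into $(z^{n+1}-z^n,\mathcal{L}z^{n+1/2})+(z^{n+1}-z^n,B^n)=0$. For the first summand I use the symmetry of $\mathcal{L}$ together with $z^{n+1/2}=(z^n+z^{n+1})/2$: the cross terms $(z^{n+1},\mathcal{L}z^n)$ and $(z^n,\mathcal{L}z^{n+1})$ coincide, so that summand telescopes to $\frac12(z^{n+1},\mathcal{L}z^{n+1})-\frac12(z^n,\mathcal{L}z^n)$. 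For the second summand I invoke the third equation of \eqref{eq-2-13}, which was designed precisely so that $(B^n,z^{n+1}-z^n)=\ln(e^{n+1})-\ln(e^n)$. Adding the two contributions yields exactly $\tilde{\mathcal{H}}^{n+1}-\tilde{\mathcal{H}}^n=0$ with $\tilde{\mathcal{H}}^n$ as in \eqref{eq-2-14}. I would also note that $\ln(e^n)$ is well defined for every $n$: since $e^0=\exp(\mathcal{H}_2(z^0))>0$ and the third equation gives $e^{n+1}=e^n\exp\big((B^n,z^{n+1}-z^n)\big)$, positivity of $e^n$ propagates by induction.

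\textbf{Linearity and implementation.} Given $z^n,z^{n-1},e^n,e^{n-1}$ from the previous levels, the extrapolated quantities $\bar{z}^{n+1/2}=(3z^n-z^{n-1})/2$ and $\bar{e}^{n+1/2}=(3e^n-e^{n-1})/2$ are explicit, hence so is $B^n=B(\bar{z}^{n+1/2},\bar{e}^{n+1/2})$. Eliminating $\mu^{n+1/2}$ from the first two equations gives $\big(I-\frac{\tau}{2}\mathcal{D}\mathcal{L}\big)z^{n+1}=\big(I+\frac{\tau}{2}\mathcal{D}\mathcal{L}\big)z^n+\tau\mathcal{D}B^n$, a single linear system with constant-coefficient operator, which establishes that the scheme is linearly implicit; $e^{n+1}$ is then recovered explicitly from $e^{n+1}=e^n\exp\big((B^n,z^{n+1}-z^n)\big)$. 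In particular, in contrast with {\bf SAV-CN}, no inner product needs to be evaluated before solving for $z^{n+1}$.

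\textbf{Second-order accuracy and the main obstacle.} For consistency I would substitute the exact solution into \eqref{eq-2-13} and Taylor-expand about $t_{n+1/2}$: the Crank--Nicolson difference quotient and the average $z^{n+1/2}$ are both second-order accurate at $t_{n+1/2}$, while the two-point extrapolations satisfy $\bar{z}^{n+1/2}=z(t_{n+1/2})+O(\tau^2)$ and $\bar{e}^{n+1/2}=e(t_{n+1/2})+O(\tau^2)$, so $B^n=B\big(z(t_{n+1/2}),e(t_{n+1/2})\big)+O(\tau^2)$; since this $O(\tau^2)$ perturbation is multiplied by $z^{n+1}-z^n=O(\tau)$ in the third equation and enters $\mu^{n+1/2}$ with relative weight $O(\tau)$ in the update of $z$, the local truncation error is $O(\tau^3)$, i.e. the scheme is consistent of order two. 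The energy identity itself is essentially forced by the construction and presents no difficulty; the part requiring genuine care is precisely this accuracy bookkeeping — tracking which $O(\tau^2)$ extrapolation errors are damped by an extra factor $\tau$ so as not to degrade the third-order local error of the midpoint rule — and, if one wants a full convergence statement rather than consistency alone, turning the discrete energy conservation \eqref{eq-2-14} into an a priori bound that controls the nonlinear extrapolation terms.
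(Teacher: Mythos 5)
Your proposal is correct and follows essentially the same route as the paper: take the inner product of the first two equations of \eqref{eq-2-13} with $\mu^{n+\frac12}$ and $\frac{z^{n+1}-z^n}{\tau}$ (equivalently, as you do, combine them via the skew-adjointness of $\mathcal{D}$ and the symmetry of $\mathcal{L}$), then use the third equation to convert $(B^n,z^{n+1}-z^n)$ into $\ln(e^{n+1})-\ln(e^n)$. The extra remarks on positivity of $e^n$, the decoupled implementation, and the order-two consistency bookkeeping simply flesh out points the paper dismisses as obvious or defers to the surrounding discussion.
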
 

\begin{proof}
The linear property and the accuracy of \eqref{eq-2-13} are obvious. The equation \eqref{eq-2-14} can be derived by taking the inner products of the first two equations in \eqref{eq-2-13} with $\mu^{n+\frac{1}{2}}$, $\frac{z^{n+1}-z^n}{\tau}$, respectively, and using the third equation in \eqref{eq-2-13}.
\end{proof}

Comparing the schemes \textbf{SAV-CN} \eqref{eq-2-5} and \textbf{ESAV-CN} \eqref{eq-2-13}, the major difference corresponds to the discretization of nonlinear term. \textbf{SAV-CN} treats the auxiliary variable implicitly and $z$ explicitly to achieve the energy conservation, whereas in  \textbf{ESAV-CN} both variables are treated explicitly, without destroying the energy conservation. Meanwhile, the resulting scheme \eqref{eq-2-13} is fully decoupled, i.e., the solution variable and the auxiliary variable can be solved step by step
\begin{align}\label{eq-2-15}
\aligned
&z^{n+1}=\left(I-\frac{1}{2}\tau\mathcal{D}\mathcal{L}\right)^{-1}\left[\big(I+\frac{1}{2}\tau\mathcal{D}\mathcal{L}\big)z^n+\tau\mathcal{D}B^n\right],\\
&e^{n+1}=\mbox{exp}\left(\mbox{ln}(e^n)+(B^n,z^{n+1}-z^n)\right).
\endaligned
\end{align}
Hence, \textbf{ESAV-CN} is much more efficient than \textbf{SAV-CN} in practical computations. Such advantage is also inherited in the following construction of high-order linearly implicit energy-preserving algorithms.

\section{The construction of high-order linearly implicit energy-preserving algorithms}\label{Sec-3}
For Hamiltonian PDEs \eqref{eq-2-1}, the construction of linear high-order energy-preserving methods is a popular and confused project. In this section, we apply the Gauss collocation method, the high-order extrapolation and prediction-correction techniques to discretize \eqref{eq-2-10} in time and advocate two novel methodologies to construct arbitrary high-order linear energy-preserving methods which can be implemented simply. As we all know, symplectic Runge-Kunta methods can maintain quadratic invariants \cite{cooper-87-RK-IMA,hairer-06-GNI-ODE}. Owing to the use of extrapolation, in this paper we investigate the Gauss collocation method to devise energy-preserving methods, in view of their excellent stability and high accuracy \cite{leimkuhler-04-SHD,hairer-06-GNI-ODE}.

\subsection{Symplectic RK methods}
Consider the non-autonomous system of first-order ordinary differential equations
\begin{equation}\label{eq-3-1}
\frac{d}{dt}z = f(t,z),\quad z(t_0)=z_0.
\end{equation}

\begin{definition}[RK methods \cite{leimkuhler-04-SHD,hairer-06-GNI-ODE}]\label{De-3-1} 
For one-step interval $[t_n,t_{n+1}]$, let $b_i,~a_{ij}~(i,j=1,2,\cdots,s)$ be real numbers and $c_i=\sum_{i=1}^sa_{ij}$. An $s$-stage Runge-Kutta method is given by
\begin{equation}\label{eq-3-2}
\begin{aligned}
&z_i^n=z^n+\tau\sum_{j=1}^sa_{ij}k_j^n,\quad k_i^n=f\left(t_0+c_i\tau,z_i^n\right),\quad i=1,2,\cdots,s,\\
&z^{n+1}=z^n+\tau\sum_{i=1}^sb_ik_i^n,
\end{aligned}
\end{equation}
where $z_i^n$ are the internal values at the current step.
\end{definition}

By Butcher's tabular \cite{hairer-06-GNI-ODE}, the coefficients in the RK method \eqref{eq-3-2} are usually displayed as follows:
\begin{equation}\label{eq-3-3}
\begin{array}{c|ccc}
c_1 & a_{11} & \cdots & a_{1s}\\
\vdots & \vdots &  & \vdots\\
c_s & a_{s1} & \cdots & a_{ss}\\
\hline
& b_1 & \cdots & b_s\\
\end{array}.
\end{equation}

\begin{lemma}[RK symplecticity conditions \cite{leimkuhler-04-SHD,hairer-06-GNI-ODE}]\label{Le-3-1}
If the coefficients of a RK method \eqref{eq-3-2} satisfy
\begin{equation}\label{eq-3-4}
b_ia_{ij}+b_ja_{ji}=b_ib_j\quad \mbox{for all}\;\; i,j=1,2,\cdots,s,
\end{equation}
then it is symplectic and can conserve all quadratic invariants of \eqref{eq-3-1}.
\end{lemma}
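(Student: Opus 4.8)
\textbf{Proof proposal for Lemma~\ref{Le-3-1} (RK symplecticity conditions).}

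The plan is to verify directly that the symplecticity condition \eqref{eq-3-4} forces the exact preservation of any quadratic invariant $Q(z)=z^\top C z$ (with $C$ symmetric) of \eqref{eq-3-1}, and then to note that this is precisely the statement we need. First I would recall what it means for $Q$ to be a quadratic invariant: $\frac{d}{dt}Q(z(t))=2z^\top C f(t,z)=0$ along any solution, hence the bilinear identity $u^\top C f(t,v)+v^\top C f(t,u)=0$ holds whenever it is applied in the appropriate way (the cleanest route is to keep track of $Q$ itself rather than general bilinear forms, but the bilinear version is what makes the cancellation transparent). Then, starting from the update $z^{n+1}=z^n+\tau\sum_i b_i k_i^n$ with $k_i^n=f(t_n+c_i\tau,z_i^n)$, I would expand
\begin{equation}\label{eq-proof-aux}
Q(z^{n+1})-Q(z^n)=2\tau\sum_{i=1}^s b_i\,(z^n)^\top C k_i^n+\tau^2\sum_{i,j=1}^s b_i b_j\,(k_i^n)^\top C k_j^n.
\end{equation}

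The key algebraic step is to rewrite the linear-in-$\tau$ term so the invariance hypothesis can be used. Using the stage equation $z^n=z_i^n-\tau\sum_j a_{ij}k_j^n$, substitute into the first sum of \eqref{eq-proof-aux} to get $2\tau\sum_i b_i (z_i^n)^\top C k_i^n-2\tau^2\sum_{i,j} b_i a_{ij}(k_j^n)^\top C k_i^n$. Now $(z_i^n)^\top C k_i^n=(z_i^n)^\top C f(t_n+c_i\tau,z_i^n)=\tfrac12\frac{d}{dt}Q|_{z=z_i^n}=0$ by the invariance of $Q$ for \eqref{eq-3-1}, so the entire linear-in-$\tau$ contribution vanishes and we are left with
\begin{equation}\label{eq-proof-rem}
Q(z^{n+1})-Q(z^n)=\tau^2\sum_{i,j=1}^s\big(b_i b_j-b_i a_{ij}-b_j a_{ji}\big)(k_i^n)^\top C k_j^n,
\end{equation}
after symmetrizing the double sum in $i\leftrightarrow j$ (legitimate since $C$ is symmetric, so $(k_j^n)^\top C k_i^n=(k_i^n)^\top C k_j^n$). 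Condition \eqref{eq-3-4} makes every coefficient in \eqref{eq-proof-rem} zero, hence $Q(z^{n+1})=Q(z^n)$. For the symplecticity claim proper, I would apply exactly this computation to the variational equation associated with \eqref{eq-3-1}: the flow's Jacobian $\Psi$ satisfies a linear ODE whose quadratic ``invariant'' is the symplectic two-form $\Psi^\top J\Psi$ with $J$ the canonical skew matrix, and the RK method applied to the variational equation is the tangent map of the RK method applied to \eqref{eq-3-1}; the same cancellation shows $\Psi^{n+1\,\top}J\Psi^{n+1}=\Psi^{n\,\top}J\Psi^n$, i.e. the map is symplectic.

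The main obstacle is bookkeeping rather than conceptual: one must be careful that the ``invariant'' used in the variational-equation argument is bilinear and antisymmetric (the two-form), not the symmetric quadratic form $Q$, so the cancellation of the linear-in-$\tau$ term relies on antisymmetry of $J$ together with the product rule for $\frac{d}{dt}(\Psi^\top J\Psi)$ vanishing — this is where one uses that $f$'s Jacobian contracted with $J$ is symmetric, the infinitesimal symplecticity condition. Apart from that, the proof is the classical Bochev–Scovel / Sanz-Serna–Lasagni argument and I would present it in the compact form \eqref{eq-proof-aux}--\eqref{eq-proof-rem}; since this lemma is quoted from \cite{leimkuhler-04-SHD,hairer-06-GNI-ODE}, a one-line pointer to those references with the display \eqref{eq-proof-rem} highlighting where \eqref{eq-3-4} enters would also be an acceptable level of detail for the paper.
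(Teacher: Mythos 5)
Your proposal is correct: it is the classical Cooper/Sanz-Serna argument, and your displays reproducing $Q(z^{n+1})-Q(z^n)=\tau^2\sum_{i,j}(b_ib_j-b_ia_{ij}-b_ja_{ji})(k_i^n)^\top C k_j^n$ after substituting the stage relation and using $z^\top Cf(t,z)=0$ are exactly the standard route, with the symplecticity half correctly reduced to the same computation for the bilinear form $\Psi^\top J\Psi$ on the variational equation. The paper itself offers no proof of Lemma~\ref{Le-3-1} --- it is quoted from \cite{leimkuhler-04-SHD,hairer-06-GNI-ODE} --- but the identical substitution-and-symmetrization cancellation is what the paper carries out inline in the proofs of Theorems~\ref{Th-3-2} and~\ref{Th-3-3} (compare your two displays with \eqref{eq-3-11}--\eqref{eq-3-12} and \eqref{eq-3-20}--\eqref{eq-3-21}), so your argument matches the approach the paper relies on; your closing suggestion that a pointer to the references suffices here is also consistent with how the paper treats the lemma.
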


Let $c_1,\cdots,c_s$ be the zeros of the $s$th shifted Legendre polynomial \cite{hairer-06-GNI-ODE} and $b_1,\cdots,b_s$ be the weights of the Gauss quadrature formula, we recognize the $s$-stage Gauss method. In particular, the following $2$-stage~(\textbf{Gauss2})~and $3$-stage~(\textbf{Gauss3})~Gauss methods will be performed in the numerical computations. The monographs \cite{leimkuhler-04-SHD,hairer-06-GNI-ODE} are suggested to readers for discovering higher order Guass methods. 
\begin{equation}\label{eq-3-5}
\begin{array}{c|cc}
\frac{1}{2}-\frac{\sqrt{3}}{6} & \frac{1}{4} & \frac{1}{4}-\frac{\sqrt{3}}{6}\\
\frac{1}{2}+\frac{\sqrt{3}}{6} & \frac{1}{4}+\frac{\sqrt{3}}{6} & \frac{1}{4}\\
\hline
& \frac{1}{2} & \frac{1}{2}
\end{array}\quad\mbox{and}\quad
\begin{array}{c|cccc}
\frac{1}{2}-\frac{\sqrt{15}}{10} & \frac{5}{36} & \frac{2}{9}-\frac{\sqrt{15}}{15} & \frac{5}{36}-\frac{\sqrt{15}}{30}\\
\frac{1}{2} & \frac{5}{36}+\frac{\sqrt{15}}{24} & \frac{2}{9}  &\frac{5}{36}-\frac{\sqrt{15}}{24} \\
\frac{1}{2}+\frac{\sqrt{15}}{10} & \frac{5}{36}+\frac{\sqrt{15}}{30} & \frac{2}{9}+\frac{\sqrt{15}}{15} & \frac{5}{36}\\
\hline
& \frac{5}{18} & \frac{4}{9} & \frac{5}{18}
\end{array}.
\end{equation}

Using the $s$-stage symplectic RK method to discretize \eqref{eq-2-10} in time, the following semi-discrete $s$-stage method is presented as
\begin{subequations}\label{eq-3-6}
\begin{align}
& z_i^n=z^n+\tau\sum_{j=1}^sa_{ij}k_j^n,\quad k_i^n=\mathcal{D}\mu_i^n,\quad \mu_i^n=\mathcal{L}z_i^n+B(z_{s,i}^n,e_{s,i}^n),\label{eq-3-6-a}\\
&e_i^n=\mbox{exp}\left(\mbox{ln}(e^n)+\tau\sum_{j=1}^sa_{ij}l_j^n\right),\quad l_i^n=\left(B(z_{s,i}^n,e_{s,i}^n),k_i^n\right),\quad i=1,2,\cdots,s,\label{eq-3-6-b}\\
&z^{n+1}=z^n+\tau\sum_{i=1}^sb_ik_i^n,\quad e^{n+1}=\mbox{exp}\left(\mbox{ln}(e^n)+\tau\sum_{i=1}^sb_il_i^n\right).\label{eq-3-6-c}
\end{align}
\end{subequations}

Moreover, when all the coefficients are given by the ones of the Gauss methods \eqref{eq-3-5}, we name this method \eqref{eq-3-6} \textbf{ESAV-Gauss}. Suppose we have time grid points $t_n=t_0+n\tau$. For the first time interval $[t_0,t_1]$, we use a fully implicit symplectic RK method to calculate $z_i^0,~e_i^0$ and $z^1,~e^1$ as 
\begin{align*}
& z_i^0=z^0+\tau\sum_{j=1}^sa_{ij}k_j^0,\quad k_i^0=\mathcal{D}\mu_i^0,\quad \mu_i^0=\mathcal{L}z_i^0+B(z_i^0,e_i^0),\\
&e_i^0=\mbox{exp}\left(\mbox{ln}(e^0)+\tau\sum_{j=1}^sa_{ij}l_j^0\right),\quad l_i^0=\left(B(z_{i}^0,e_{i}^0),k_i^0\right),\quad i=1,2,\cdots,s,\\
&z^{1}=z^0+\tau\sum_{i=1}^sb_ik_i^0,\quad e^{1}=\mbox{exp}\left(\mbox{ln}(e^1)+\tau\sum_{i=1}^sb_il_i^0\right).
\end{align*}
For $n>1$, we apply scheme \textbf{ESAV-Gauss} \eqref{eq-3-6} to update the solutions. More specifically, suppose we have obtained the values of $z^{n-1}$ and internal stages $z^{n-1}_i$ in time interval $[t_{n-1},t_n]$, then the internal values $z_{s,i}^n$ are approximated by the interpolation polynomial of points $(t_{n-1},z^{n-1})$ and $\left(t_{n-1}+c_i\tau,z_i^{n-1}\right)$. The treatment of the auxiliary variable $e$ is exactly the same, so one obtains the nonlinear term $B(z_{s,i}^n,e_{s,i}^n)$ as an explicit high-order extrapolation of $B(z_i^n,e_i^n)$.

\begin{remark}\label{Re-3-1}
Considering the implementation of \eqref{eq-3-6}, we first get a linear equation of $z_i^n$ from \eqref{eq-3-6-a}. Then substituting the obtained $z_i^n$ into \eqref{eq-3-6-b} can calculate $e_i^n$. Finally, the numerical solutions $z^{n+1}$ and $e^{n+1}$ are updated by \eqref{eq-3-6-c}. Noting that the internal values $z_i^n$ and $e_i^n$ are decoupled to be solved step by step. Moreover, the scheme \eqref{eq-3-6} removes the computational complexity caused by high-order SAV schemes \cite{akrivis-19-RK-SAV-SIAMJSC}.
\end{remark}

Taking $s=2$ as an interpretation for the nonlinear term $B(z_{s,i}^n,e_{s,i}^n)$, one first derive the following interpolation polynomial of degree $2$ as:
\begin{equation}\label{eq-3-7}
z_s^n(c)=\frac{(1+c-c_1)(1+c-c_1)}{c_1c_2}z^{n-1}+\frac{(1+c)(1+c-c_2)}{c_1(c_1-c_2)}z_1^{n-1}+\frac{(1+c)(1+c-c_1)}{c_2(c_2-c_1)}z_2^{n-1}
\end{equation}
with $c_1=\frac{1}{2}-\frac{\sqrt{3}}{6}$ and $c_2=\frac{1}{2}+\frac{\sqrt{3}}{6}$. Then we have the approximation of internal values as
\begin{equation}\label{eq-3-8}
\begin{aligned}
&z_{2,1}^n=z_s^n(c_1)=(-2\sqrt{3}+6)z^{n-1}+(-3\sqrt{3}+1)z_1^{n-1}+(5\sqrt{3}-6)z_2^{n-1},\\
&z_{2,2}^n=z_s^n(c_2)=(2\sqrt{3}+6)z^{n-1}+(-5\sqrt{3}-6)z_1^{n-1}+(3\sqrt{3}+1)z_2^{n-1}.
\end{aligned}
\end{equation}
Similarly, the expressions of $e_{2,i}^n$ can be listed with the same coefficients as \eqref{eq-3-8}. Furthermore, the coefficients of the interpolation polynomial determined by \textbf{Gauss3} are reported in the following table.
\begin{table}[H]
\tabcolsep=19pt \small \renewcommand \arraystretch{1.3} \centering
\caption{Coefficients of the interpolation polynomial with $s=3$.}\label{Tab-1}
\begin{tabularx}{\textwidth}{*{5}{c}} \toprule
$t_n+c_i\tau$ & $z^{n-1}~(e^{n-1})$ & $z_1^{n-1}~(e_1^{n-1})$ & $z_2^{n-1}~(e_2^{n-1})$  & $z_3^{n-1}~(e_3^{n-1})$ \\ \midrule
\multirow{1}*{$z_{3,1}^{n}~(e_{3,1}^{n})$}
& $6\sqrt{15}-26$ & $-5\sqrt{15}/3+11$ & $16\sqrt{15}/3-24$ & $-29\sqrt{15}/3+40$\\
\multirow{1}*{$z_{3,2}^{n}~(e_{3,2}^{n})$}
& $-17$ & $5\sqrt{15}/2+35/2$ & $-17$ & $-5\sqrt{15}/2+35/2$\\ 
\multirow{1}*{$z_{3,3}^{n}~(e_{3,3}^{n})$}
& $-6\sqrt{15}-26$ & $29\sqrt{15}/3+40$ & $-16\sqrt{15}/3-24$ & $5\sqrt{15}/3+11$\\ \bottomrule
\end{tabularx}
\end{table}	 

In particular, the remainder of the interpolation polynomial shows that the equations
\begin{equation}\label{eq-3-9}
z_{2,i}^n = z(t_n+c_i\tau)+\mathcal{O}(\tau^3),\quad e_{2,i}^n = e(t_n+c_i\tau)+\mathcal{O}(\tau^3),\quad i=1,2.
\end{equation}
This indicates that the scheme \eqref{eq-3-6} may achieve third order accuracy when $s=2$ and could be accompanied with a general accuracy $\mathcal{O}(\tau^{s+2})$. Note that too many interpolation points are not selected here, otherwise, the resulting interpolation polynomial could be highly oscillating, which makes the extrapolation not sufficiently accurate \cite{gong-20-high-stable-JCP}. In order to improve the accuracy, a novel prediction-correction strategy is proposed later to regain the accuracy of the Gauss method. Nevertheless, the above scheme \eqref{eq-3-6} satisfies the following energy conservation property.

\begin{theorem}\label{Th-3-2}
The semi-discrete $s$-stage method \eqref{eq-3-6} is a linearly implicit energy-preserving method subject to the discrete energy conservation law
\begin{equation}\label{eq-3-10}
\tilde{\mathcal{H}}_s^{n+1}=\tilde{\mathcal{H}}_s^n,\quad \tilde{\mathcal{H}}_s^n=\frac{1}{2}\big(z^n,\mathcal{L}z^n\big)+\ln (e^n).
\end{equation} 
\end{theorem}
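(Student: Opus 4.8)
The plan is to split the energy increment $\tilde{\mathcal{H}}_s^{n+1}-\tilde{\mathcal{H}}_s^n$ into its quadratic part $\frac{1}{2}(z^{n+1},\mathcal{L}z^{n+1})-\frac{1}{2}(z^n,\mathcal{L}z^n)$ and its logarithmic part $\ln(e^{n+1})-\ln(e^n)$, to treat each separately using the structure of \eqref{eq-3-6}, and then to combine them so that the nonlinear contributions telescope. The linearly implicit assertion requires no computation: since the extrapolated arguments $z_{s,i}^n$ and $e_{s,i}^n$ are constructed solely from data already obtained in the preceding interval, the quantity $B(z_{s,i}^n,e_{s,i}^n)$ is known before the current step begins, so \eqref{eq-3-6-a} is a linear system for $z_i^n$, while \eqref{eq-3-6-b} and \eqref{eq-3-6-c} are explicit updates for $e_i^n$, $z^{n+1}$ and $e^{n+1}$.

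For the quadratic part I would follow the classical argument behind Lemma \ref{Le-3-1}. Expanding $z^{n+1}=z^n+\tau\sum_i b_ik_i^n$ and using the symmetry of $\mathcal{L}$ gives $\frac{1}{2}(z^{n+1},\mathcal{L}z^{n+1})-\frac{1}{2}(z^n,\mathcal{L}z^n)=\tau\sum_i b_i(k_i^n,\mathcal{L}z^n)+\frac{\tau^2}{2}\sum_{i,j}b_ib_j(k_i^n,\mathcal{L}k_j^n)$. Substituting $z^n=z_i^n-\tau\sum_j a_{ij}k_j^n$ into the first sum, then invoking the symplecticity condition \eqref{eq-3-4} together with the symmetry $(k_i^n,\mathcal{L}k_j^n)=(k_j^n,\mathcal{L}k_i^n)$ to rewrite $\sum_{i,j}b_ia_{ij}(k_i^n,\mathcal{L}k_j^n)=\frac{1}{2}\sum_{i,j}b_ib_j(k_i^n,\mathcal{L}k_j^n)$, the $\mathcal{O}(\tau^2)$ contributions cancel and one is left with the clean identity $\frac{1}{2}(z^{n+1},\mathcal{L}z^{n+1})-\frac{1}{2}(z^n,\mathcal{L}z^n)=\tau\sum_i b_i(k_i^n,\mathcal{L}z_i^n)$.

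For the logarithmic part, \eqref{eq-3-6-c} directly gives $\ln(e^{n+1})-\ln(e^n)=\tau\sum_i b_il_i^n=\tau\sum_i b_i\big(B(z_{s,i}^n,e_{s,i}^n),k_i^n\big)$. Adding the two contributions and recalling from \eqref{eq-3-6-a} that $\mu_i^n=\mathcal{L}z_i^n+B(z_{s,i}^n,e_{s,i}^n)$, the increment becomes $\tilde{\mathcal{H}}_s^{n+1}-\tilde{\mathcal{H}}_s^n=\tau\sum_i b_i(k_i^n,\mu_i^n)=\tau\sum_i b_i(\mathcal{D}\mu_i^n,\mu_i^n)$, which vanishes term by term because $\mathcal{D}$ is skew-adjoint. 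This yields \eqref{eq-3-10}. The same manipulation, with $z_{s,i}^0=z_i^0$ and $e_{s,i}^0=e_i^0$, also covers the fully implicit start-up step.

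The only genuinely nontrivial ingredient is the quadratic-invariant cancellation, i.e. the careful bookkeeping that uses \eqref{eq-3-4} and the symmetry of $\mathcal{L}$; everything else is routine. The one point worth emphasizing is structural rather than computational: the $\ln(e)$ update $l_i^n=(B(z_{s,i}^n,e_{s,i}^n),k_i^n)$ is designed precisely so that its weighted sum supplies exactly the term that completes $\mathcal{L}z_i^n$ into $\mu_i^n$, after which skew-adjointness of $\mathcal{D}$ finishes the argument — and, crucially, this works with an entirely explicit (extrapolated) nonlinear term, which is the advantage over the SAV schemes.
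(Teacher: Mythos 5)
Your proof is correct and follows essentially the same route as the paper: expand the quadratic form, substitute $\mathcal{L}z^n=\mathcal{L}z_i^n-\tau\sum_j a_{ij}\mathcal{L}k_j^n$, cancel the $\mathcal{O}(\tau^2)$ terms via the symplecticity condition \eqref{eq-3-4}, and then use $\mu_i^n=\mathcal{L}z_i^n+B(z_{s,i}^n,e_{s,i}^n)$ together with the skew-adjointness of $\mathcal{D}$ so that the $\ln(e)$ increment exactly cancels the remaining term. The only additions beyond the paper's argument are minor: you spell out the linearly implicit claim and note that the fully implicit start-up step is covered by the same computation.
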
 

\begin{proof}
The linear property of \eqref{eq-3-6} is discussed in Remark \ref{Re-3-1}. Using the left-hand equation of \eqref{eq-3-6-c}, one can get 
\begin{equation}\label{eq-3-11}
\left(z^{n+1},\mathcal{L}z^{n+1}\right)=\left(z^n,\mathcal{L}z^n\right)+\tau\sum_{i=1}^{s}b_i\left(k_i^{n},\mathcal{L}z^n\right)+\tau\sum_{j=1}^{s}b_j\left(z^{n},\mathcal{L}k_j^{n}\right) +\tau^{2}\sum_{i,j=1}^{s}b_ib_{j}\left(k_i^{n},\mathcal{L}k_j^{n}\right).
\end{equation} 	
Substituting $\mathcal{L}z^n=\mathcal{L}z_i^n-\tau\sum_{j=1}^sa_{ij}\mathcal{L}k_j^n$ into \eqref{eq-3-11}, and noting the symmetry of $\mathcal{L}$, we derive
\begin{equation}\label{eq-3-12}
\left(z^{n+1},\mathcal{L}z^{n+1}\right)=\left(z^n,\mathcal{L}z^n\right)+2\tau\sum_{i=1}^{s}b_i\left(k_i^{n},\mathcal{L}z_i^n\right)+\tau^{2}\sum_{i,j=1}^{s}\left(b_ib_j-b_ia_{ij}-b_ja_{ji}\right)\left(k_i^{n},\mathcal{L}k_j^{n}\right).
\end{equation} 	
The symplecticity conditions \eqref{eq-3-4} mean that $b_ib_j-b_ia_{ij}-b_ja_{ji}=0$. The right-hand equation of \eqref{eq-3-6-c} implies that
\begin{equation}\label{eq-3-13}
\mbox{ln}\left(e^{n+1}\right)=\mbox{ln}\left(e^n\right)+\tau\sum_{i=1}^sb_il_i^n.
\end{equation} 	
Summing \eqref{eq-3-12} and \eqref{eq-3-13}, it is a simple check for the equation
\begin{equation}\label{eq-3-14}
\tilde{\mathcal{H}}_s^{n+1}=\tilde{\mathcal{H}}_s^n+\tau\sum_{i=1}^{s}b_i\big[\left(k_i^n,\mathcal{L}z_i^n\right)+l_i^n\big].
\end{equation} 
The final result \eqref{eq-3-10} can be obtained from
\begin{equation}\label{eq-3-15}
\left(k_i^n,\mathcal{L}z_i^n\right)=\left(k_i^n,\mu_i^n-B(z_{s,i}^n,e_{s,i}^n)\right)=(\mathcal{D}\mu_i^n,\mu_i^n)-\left(k_i^n,B(z_{s,i}^n,e_{s,i}^n)\right)=-l_i^n.
\end{equation} 
\end{proof}

\subsection{Prediction-correction methods}
A new prediction-correction strategy different from that in \cite{gong-20-high-stable-JCP} is developed in this subsection to improve the accuracy of \eqref{eq-3-6}. Thus, another family of linear high-order energy-preserving methods is proposed for \eqref{eq-2-10}. When all the coefficients are selected as the ones of the Gauss method, we simply denote these methods as \textbf{ESAV-Gauss-PC}.

\begin{ESAV-Gauss-PC}
Let $\lambda$ and $\Lambda$ be iteration variable and maximal iteration step, respectively. The mark $TOL$ is a given tolerable error. Assume $z_{s,i}^n$ and $e_{s,i}^n$ have been obtained from the interpolations with low-order accuracy. Setting $z_i^{n(1)}=z_{s,i}^n$ and $e_i^{n(1)}=e_{s,i}^n$, then we iteratively solve $z_i^{n(\lambda+1)}$ and $e_i^{n(\lambda+1)}$ from $\lambda=1,2,\cdots,\Lambda$ by
\begin{equation}\label{eq-3-16}
\begin{aligned}
& z_i^{n(\lambda+1)}=z^n+\tau\sum_{j=1}^sa_{ij}k_j^{n(\lambda+1)},\quad k_i^{n(\lambda+1)}=\mathcal{D}\mu_i^{n(\lambda+1)},\quad \mu_i^{n(\lambda+1)}=\mathcal{L}z_i^{n(\lambda+1)}+B\left(z_i^{n(\lambda)},e_i^{n(\lambda)}\right),\\
&e_i^{n(\lambda+1)}=\exp\left(\ln (e_n)+\tau\sum_{j=1}^sa_{ij}l_j^{n(\lambda+1)}\right),\quad l_i^{n(\lambda+1)}=\left(B\left(z_i^{n(\lambda)},e_i^{n(\lambda)}\right),k_i^{n(\lambda+1)}\right),\quad i=1,2,\cdots,s.\\
\end{aligned}
\end{equation}
If $\max\limits_{1\leq i\leq s}\left\{\|z_i^{n(\lambda+1)}-z_i^{n(\lambda)}\|_{\infty},~\|e_i^{n(\lambda+1)}-e_i^{n(\lambda)}\|_{\infty}\right\}<\mbox{TOL}$, the iteration procedure is terminated with furtherly evaluating $k_i^{n(\lambda+1)}$ and $l_i^{n(\lambda+1)}$. And if not, we take $k_i^{n(\lambda+1)}=k_i^{n(\Lambda+1)}$, $l_i^{n(\lambda+1)}=l_i^{n(\Lambda+1)}$. Finally, the numerical solutions are updated by
\begin{equation}\label{eq-3-17}
z^{n+1}=z^n+\tau\sum_{i=1}^sb_ik_i^{n(\lambda+1)},\quad e^{n+1}=\exp\left(\ln (e^n)+\tau\sum_{i=1}^sb_il_i^{n(\lambda+1)}\right).
\end{equation}
\end{ESAV-Gauss-PC}

Similar as schemes \textbf{ESAV-CN} and \textbf{ESAV-Gauss}, the implementation of \textbf{ESAV-Gauss-PC} is totally explicit, and $z_i^{n(\lambda+1)}$, $e_i^{n(\lambda+1)}$ can be solved step by step. Moreover, such high-order scheme also admits an energy conservation law.

\begin{theorem}\label{Th-3-3}
For each iteration step $\lambda=1,2,\cdots,\Lambda$, \textbf{ESAV-Gauss-PC} is a linear energy-preserving method with the discrete energy conservation law as follows: 
\begin{equation}\label{eq-3-18}
\tilde{\mathcal{H}}_s^{n+1}=\tilde{\mathcal{H}}_s^n,\quad \tilde{\mathcal{H}}_s^n=\frac{1}{2}\big(z^n,\mathcal{L}z^n\big)+\ln (e^n).
\end{equation} 	
\end{theorem}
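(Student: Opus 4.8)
The plan is to mimic the proof of Theorem~\ref{Th-3-2} almost verbatim, observing that the energy identity there never exploits the particular way the nonlinear term is extrapolated, only the Runge--Kutta structure, the symplecticity condition~\eqref{eq-3-4}, the symmetry of $\mathcal{L}$ and the skew-adjointness of $\mathcal{D}$. First I would fix, for a given time step, the terminal iteration index, calling it $\lambda+1$ in the notation of \eqref{eq-3-16}--\eqref{eq-3-17} (in the non-convergent branch this is the relabelled $\Lambda+1$ data). The crucial point is that, whichever termination branch is taken, the quantities $z_i^{n(\lambda+1)}$, $k_i^{n(\lambda+1)}$, $\mu_i^{n(\lambda+1)}$, $l_i^{n(\lambda+1)}$ actually fed into the final update still satisfy exactly the relations $z_i^{n(\lambda+1)}=z^n+\tau\sum_j a_{ij}k_j^{n(\lambda+1)}$, $k_i^{n(\lambda+1)}=\mathcal{D}\mu_i^{n(\lambda+1)}$, $\mu_i^{n(\lambda+1)}=\mathcal{L}z_i^{n(\lambda+1)}+B(z_i^{n(\lambda)},e_i^{n(\lambda)})$ and $l_i^{n(\lambda+1)}=\big(B(z_i^{n(\lambda)},e_i^{n(\lambda)}),k_i^{n(\lambda+1)}\big)$; that is, the frozen nonlinear term $B(z_i^{n(\lambda)},e_i^{n(\lambda)})$ simply plays the role that $B(z_{s,i}^n,e_{s,i}^n)$ plays in \eqref{eq-3-6}.

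Second, I would repeat the algebra of \eqref{eq-3-11}--\eqref{eq-3-12}: expand $(z^{n+1},\mathcal{L}z^{n+1})$ using the left equation of \eqref{eq-3-17}, substitute $\mathcal{L}z^n=\mathcal{L}z_i^{n(\lambda+1)}-\tau\sum_j a_{ij}\mathcal{L}k_j^{n(\lambda+1)}$, and use the symmetry of $\mathcal{L}$ to collect the quadratic terms into the coefficient $b_ib_j-b_ia_{ij}-b_ja_{ji}$, which vanishes by the symplecticity condition~\eqref{eq-3-4}. This yields $(z^{n+1},\mathcal{L}z^{n+1})=(z^n,\mathcal{L}z^n)+2\tau\sum_i b_i(k_i^{n(\lambda+1)},\mathcal{L}z_i^{n(\lambda+1)})$. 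Combining with $\ln(e^{n+1})=\ln(e^n)+\tau\sum_i b_i l_i^{n(\lambda+1)}$, read off from the right equation of \eqref{eq-3-17}, gives $\tilde{\mathcal{H}}_s^{n+1}=\tilde{\mathcal{H}}_s^n+\tau\sum_i b_i\big[(k_i^{n(\lambda+1)},\mathcal{L}z_i^{n(\lambda+1)})+l_i^{n(\lambda+1)}\big]$.

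Third, I would close the argument exactly as in \eqref{eq-3-15}: using $\mathcal{L}z_i^{n(\lambda+1)}=\mu_i^{n(\lambda+1)}-B(z_i^{n(\lambda)},e_i^{n(\lambda)})$, $k_i^{n(\lambda+1)}=\mathcal{D}\mu_i^{n(\lambda+1)}$ and the skew-adjointness of $\mathcal{D}$, so that $(\mathcal{D}\mu_i^{n(\lambda+1)},\mu_i^{n(\lambda+1)})=0$, one obtains $(k_i^{n(\lambda+1)},\mathcal{L}z_i^{n(\lambda+1)})=-(k_i^{n(\lambda+1)},B(z_i^{n(\lambda)},e_i^{n(\lambda)}))=-l_i^{n(\lambda+1)}$, whence the bracket vanishes termwise and \eqref{eq-3-18} follows. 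The linear-implicit nature of each iteration, valid for every $\lambda$, has already been recorded in the surrounding text.

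I do not anticipate a genuine obstacle: the computation is structurally identical to that of Theorem~\ref{Th-3-2}. The only point demanding care is the bookkeeping of the two termination branches in \textbf{ESAV-Gauss-PC} — one must verify that in both cases the $k_i$ and $l_i$ passed to \eqref{eq-3-17} are the genuine outputs of a single pass of \eqref{eq-3-16} with a common set of internal values $z_i^{n(\lambda+1)}$, since otherwise the skew-adjointness cancellation in the last step would no longer apply. Once this is checked, the identity holds uniformly in $\lambda$.
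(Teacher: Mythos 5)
Your proposal is correct and follows essentially the same route as the paper's own proof: expand $\big(z^{n+1},\mathcal{L}z^{n+1}\big)$ via \eqref{eq-3-17}, substitute $\mathcal{L}z^n=\mathcal{L}z_i^{n(\lambda+1)}-\tau\sum_{j}a_{ij}\mathcal{L}k_j^{n(\lambda+1)}$, invoke the symmetry of $\mathcal{L}$ and the symplecticity condition \eqref{eq-3-4}, and close with the skew-adjointness of $\mathcal{D}$ to obtain $\big(k_i^{n(\lambda+1)},\mathcal{L}z_i^{n(\lambda+1)}\big)=-l_i^{n(\lambda+1)}$. Your extra remark on checking that both termination branches feed a consistent single pass of \eqref{eq-3-16} into \eqref{eq-3-17} is a sensible clarification of a point the paper leaves implicit, but it does not change the argument.
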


\begin{proof}
According to \eqref{eq-3-17}, this gives
\begin{equation}\label{eq-3-19}
\mathcal{L}z^{n+1}=\mathcal{L}z^n+\tau\sum_{i=1}^sb_i\mathcal{L}k_i^{n(\lambda+1)},\quad \mbox{ln}\left(e^{n+1}\right)=\mbox{ln}(e^n)+\tau\sum_{i=1}^sb_il_i^{n(\lambda+1)}.
\end{equation}
Taking the inner product of the left-hand equation in \eqref{eq-3-19} with $z^{n+1}$, we obtain
\begin{equation}\label{eq-3-20}
\begin{aligned}
\left(z^{n+1},\mathcal{L}z^{n+1}\right)
&=\left(z^n,\mathcal{L}z^n\right)+\tau\sum_{i=1}^{s}b_i\left(k_i^{n(\lambda+1)},\mathcal{L}z^n\right)+\tau\sum_{j=1}^{s}b_j\left(z^{n},\mathcal{L}k_j^{n(\lambda+1)}\right)\\
&~~~+\tau^{2}\sum_{i,j=1}^{s}b_ib_{j}\left(k_i^{n(\lambda+1)},\mathcal{L}k_j^{n(\lambda+1)}\right).
\end{aligned}
\end{equation}
Inserting $\mathcal{L}z^n=\mathcal{L}z_i^{n(\lambda+1)}-\tau\sum_{j=1}^sa_{ij}\mathcal{L}k_j^{n(\lambda+1)}$ into \eqref{eq-3-20} and using the symmetry of $\mathcal{L}$ yield
\begin{equation}\label{eq-3-21}
\begin{aligned}
\left(z^{n+1},\mathcal{L}z^{n+1}\right)
&=\left(z^n,\mathcal{L}z^n\right)+2\tau\sum_{i=1}^{s}b_i\left(k_i^{n(\lambda+1)},\mathcal{L}z_i^{n(\lambda+1)}\right)\\
&~~~+\tau^{2}\sum_{i,j=1}^{s}\left(b_ib_j-b_ia_{ij}-b_ja_{ji}\right)\left(k_i^{n(\lambda+1)},\mathcal{L}k_j^{n(\lambda+1)}\right).
\end{aligned}
\end{equation}
Applying the symplecticity condition\eqref{eq-3-4} and summing with the right-hand equation of \eqref{eq-3-19} report that
\begin{equation}\label{eq-3-22}
\tilde{\mathcal{H}}_s^{n+1}=\tilde{\mathcal{H}}_s^n+\tau\sum_{i=1}^{s}b_i\bigg[\left(k_i^{n(\lambda+1)},\mathcal{L}z_i^{n(\lambda+1)}\right)+l_i^{n(\lambda+1)}\bigg].
\end{equation} 
With the first row of \eqref{eq-3-16} and the skew-adjoint operator $\mathcal{D}$, it is sufficient to verify \eqref{eq-3-18} by
\begin{equation}\label{eq-3-23}
\begin{aligned}
\left(k_i^{n(\lambda+1)},\mathcal{L}z_i^{n(\lambda+1)}\right)
&=\left(k_i^{n(\lambda+1)},\mu_i^{n(\lambda+1)}-B\left(z_i^{n(\lambda)},e_i^{n(\lambda)}\right)\right)\\
&=\left(\mathcal{D}\mu_i^{n(\lambda+1)},\mu_i^{n(\lambda+1)}\right)-\left(k_i^{n(\lambda+1)},B\left(z_i^{n(\lambda)},e_i^{n(\lambda)}\right)\right)=-l_i^{n(\lambda+1)}.
\end{aligned}
\end{equation}
\end{proof}

\begin{remark}\label{Re-3-2}
The above prediction-correction strategy is actually different from that in \cite{gong-20-high-stable-JCP} where the prediction step is to obtain a high-order approximation of internal stages and the correction step is used to maintain the energy conservation. Here, the prediction step corresponds to the interpolation of the internal stages with low-order accuracy, whereas the conservative correction step is only used to improve the accuracy.	
\end{remark}

\begin{remark}\label{Re-3-3}
Scheme \textbf{ESAV-Gauss-PC} can be viewed as a fully implicit symplectic RK method applied to the reformulated system \eqref{eq-2-10}, with initial values in the fixed point iteration choosing as the low-order interpolation. From the following numerical examples,  we observe that the correction step in \eqref{eq-3-16} is nearly 5. While in the standard symplectic RK method, the initial values are usually chosen as $z^n$ and $e^n$ uniformly so that the convergence is much slower than scheme \textbf{ESAV-Gauss-PC}.
\end{remark}

\section{Spatial Fourier pseudo-spectral methods}\label{Sec-4}
This paper focuses on the setting of periodic boundary conditions of \eqref{eq-2-1}. This suggests that we apply Fourier pseudo-spectral methods \cite{gong-17-FP-NLS-JCP,shen-11-spectral} to spatial discretization. Taking $\Omega=[x_R,x_L]\times[y_R,y_L]$ as an example, the spatial mesh sizes equal to $h_x=l_1/N_x$, $h_y=l_2/N_y$ with even numbers $N_x$, $N_y$ and periods $l_1=x_L-x_R$, $l_2=y_L-y_R$. Then grid points in space can be presented by
\begin{equation}\label{eq-4-1}
\Omega_h=\big\{(x_j,y_k)|x_j=x_R+(j-1)h_x,y_k=y_R+(k-1)h_y,j=1,2,\cdots,N_x,k=1,2,\cdots,N_y\big\}.
\end{equation} 
The following interpolation space is as follows
\begin{equation}\label{eq-4-2}
I_N=\mbox{span}\big\{X_m(x)Y_n(y),m=1,2,\cdots,N_x,n=1,2,\cdots,N_y\big\},
\end{equation} 
where $X_m(x)$ and $Y_n(y)$ are the exponential polynomials of degree $N_x/2$ and $N_y/2$, respectively. Using the Kronecker notation, these polynomials satisfy $X_m(x_j)=\delta_m^j$, $Y_n(y_k)=\delta_n^k$ and have explicit expressions  
\begin{equation}\label{eq-4-3}
X_m(x)=\frac{1}{N_x}\sum_{w=-N_x/2}^{N_x/2}\frac{1}{\alpha_w}e^{iw\mu_x\left(x-x_m\right)},\quad Y_n(y)=\frac{1}{N_y}\sum_{w=-N_y/2}^{N_y/2}\frac{1}{\alpha_w}e^{iw\mu_y\left(y-y_n\right)},
\end{equation} 
where $\alpha_w=1$ ($-N_\gamma/2<w<N_\gamma/2$) and $\alpha_{-N_\gamma/2}=\alpha_{N_\gamma/2}=2$ with $\gamma=x\ \mbox{or}\ y$. The notations $\mu_x=2\pi/l_1$ and $\mu_y=2\pi/l_2$ are correction factors for the spatial region. Assume that $z_{jk}$ is the exact solution of $z$ at grid point $(x_j,y_k)$. Then the interpolation polynomial of $z$ can be given by
\begin{equation}\label{eq-4-4}
\big(\mathcal{I}_Nz\big)\left(x,y\right)=\sum_{m=1}^{N_x}\sum_{n=1}^{N_y}z_{mn}X_m(x)Y_n(y)
\end{equation} 
with $\big(\mathcal{I}_Nz\big)\left(x_j,y_k\right)=z_{jk}$. In order to evaluate the values of derivatives $\frac{\partial^2(\mathcal{I}_Nz)}{\partial x^2}$ and $\frac{\partial^2(\mathcal{I}_Nz)}{\partial y^2}$ at $(x_j,y_k)$, differentiating \eqref{eq-4-4} induces
\begin{align}
&\frac{\partial^2(\mathcal{I}_Nz)}{\partial x^2}(x_j,y_k)=\sum_{m=1}^{N_x}\sum_{n=1}^{N_y}z_{mn}\frac{d^2 X_m}{dx^2}\left(x_j\right)Y_n(y_k)=\left(D_x^2\textbf{z}\right)_{jk},\label{eq-4-5}\\
&\frac{\partial^2(\mathcal{I}_Nz)}{\partial y^2}(x_j,y_k)=\sum_{m=1}^{N_x}\sum_{n=1}^{N_y}z_{mn}X_m(x_j)\frac{d^2 Y_n}{dy^2}\left(y_k\right)=\left(\textbf{z}D_y^2\right)_{jk},\label{eq-4-6}
\end{align} 
where $\textbf{z}=\left(z_{jk}\right)$ is the $N_x\times N_y$ matrix and $D_\gamma^2\ (\gamma=x\ \mbox{or}\ y)$ is the second-order differentiation matrix associated with Fourier pseudo-spectral methods. In addition, $D_\gamma^2$ is the $N_\gamma\times N_\gamma$ real symmetric matrix that can be explicitly displayed as follows:
\begin{align}\label{eq-4-7}
(D_\gamma^2)_{mn}=
\left\{\aligned
&-\mu_\gamma^2\frac{N_\gamma^2+2}{12},\quad\quad\quad\quad\quad\quad\quad\quad\quad\quad\quad\mbox{if}\ m=n,\\
&(-1)^{m+n+1}\frac{\mu_\gamma^2}{2}\mbox{csc}^2\left(\frac{(m-n)\mu_\gamma h_\gamma}{2}\right),\quad \mbox{if}\ m\neq n.\\
\endaligned
\right.
\end{align}
The expression of first-order differentiation matrix can be derived in exactly the same way as
\begin{align}\label{eq-4-8}
(D_\gamma^1)_{mn}=
\left\{\aligned
&0,\quad\quad\quad\quad\quad\quad\quad\quad\quad\quad\quad\quad\quad\quad~~\mbox{if}\ m=n,\\
&(-1)^{m+n}\frac{\mu_\gamma}{2}\mbox{cot}\left(\frac{(m-n)\mu_\gamma h_\gamma}{2}\right),\quad \mbox{if}\ m\neq n.\\
\endaligned
\right.
\end{align}
For higher order spectral differential matrices, please refer to \cite{gong-17-FP-NLS-JCP,shen-11-spectral}. Let $Z=(Z_{jk})$ be matrix of the numerical solution, for arbitrary grid functions $Z$ and $\hat{Z}$, we define the discrete inner product
\begin{equation}\label{eq-4-9}
\left(Z,\hat{Z}\right)_h=h_xh_y\sum_{j=1}^{N_x}\sum_{k=1}^{N_y}{Z}_{jk}{\hat{Z}}_{jk}
\end{equation} 
with the induced discrete $L^2$ and $L^{\infty}$ norms
\begin{equation}\label{eq-4-10}
||Z||_h=\sqrt{(Z,Z)_h},\quad ||Z||_{h,\infty}=\mathop{\max}\limits_{(x_j,y_k)\in\Omega_h}\left|Z_{jk}\right|
\end{equation} 
and the following semi-norm \cite{gong-17-FP-NLS-JCP} 
\begin{equation}\label{eq-4-11}
|Z|_h=\sqrt{(-D_x^2Z,Z)_h+(-ZD_y^2,Z)_h}.
\end{equation} 

These norms are used in the discrete energy expression of the corresponding fully discrete scheme. For space limitation, the relevant details can be omitted. In addition, there exists the relation $D_\gamma^\alpha=F_{N_\gamma}^{H}\Lambda_\gamma^\alpha F_{N_\gamma}$~$(\alpha=1\ \mbox{or}\ 2)$, where $F_{N_\gamma}$ is the discrete Fourier transform matrix with its conjugate transpose $F_{N_\gamma}^H$. The matrix $\Lambda_\gamma^\alpha$ is diagonal with the eigenvalues of $D_\gamma^\alpha$ being its entries. In actual calculations, the linear and decoupled properties make the implementation of all fully discrete schemes efficient with the fast Fourier transformation.

\section{Numerical experiments}\label{Sec-5}
In this section, extensive numerical experiments demonstrate the accuracy and energy conservation of the proposed algorithms. For the prediction-correction method \textbf{ESAV-Gauss-PC}, we always take $TOL=10^{-12}$ in all numerical examples. The computing environment is Matlab R2016a with Intel Core i5-3470 CPU, 3.20 GHz and 4GB memory.\\

\noindent \textbf{Example 5.1.}~(Accuracy and efficiency tests) We investigate the NLS equation
\begin{equation}\label{eq-5-1}
\mbox{i}u_t+\triangle u +\beta |u|^2u=0,\quad (x,y)\in\Omega\in \mathbb{R}^2,\ t\in (0,T]
\end{equation} 
that generates a progressive plane wave solution \cite{gong-17-FP-NLS-JCP}
\begin{equation}\label{eq-5-2}
u(x,y,t)=A\ \mbox{exp}\big(i(c_1x+c_2y-\omega t)\big)
\end{equation} 
with $\omega=c_1^2+c_2^2-\beta A^2$. This example is numerically solved on the spatial region $\Omega=[0,2\pi)\times[0,2\pi)$, where we choose $\beta=c_1=c_2=A=1$ and $N_x=N_y$. The exact solution \eqref{eq-5-2} gives the initial condition by setting $t=0$, and $(2\pi,2\pi)$-periodic boundary condition is used. The different choices of the space division make the spatial error small to be negligible.

Table. \ref{Tab-2} reports that when the temporal error is small enough, the total error depends entirely on the error of the time discretization. In particular, this also confirms that the exact solution \eqref{eq-5-2} is sufficiently smooth to ensure that the spatial Fourier pseudo-spectral method has high precision. In the following, the temporal convergence rates of \textbf{SAV-CN} and \textbf{ESAV-CN} are calculated by
\begin{equation}\label{eq-5-3}
\mbox{Order}=\mbox{log}_2\frac{\mbox{Error}(h,\tau)}{\mbox{Error}(h,\tau/2)}
\end{equation} 
with $h=2\pi/N_x$, where $\mbox{Error}(h,\tau)$ denotes the $L^2$ or $L^\infty$ error between the numerical solution and the exact solution \eqref{eq-5-2} under grid step sizes $h$ and $\tau$. As displayed in Table. \ref{Tab-3}, the second-order convergence of \textbf{SAV-CN} and \textbf{ESAV-CN} in the time direction is accurately evaluated. The two tables also list the total CPU times obtained by these two schemes in different situations, which indicates that \textbf{ESAV-CN} is more economical to fast simulation. In addition, Fig. \ref{Fig-1} depicts the comparisons of the numerical errors of these two schemes at different time steps, which also includes the derivation of the modified energy error. The computational results illustrate that \textbf{ESAV-CN} has a smaller numerical error than \textbf{SAV-CN}, although the energy errors appear to be almost the same up to the machine accuracy. 

For the proposed high-order schemes, in Fig. \ref{Fig-2}, it is an immediate fact that the accuracy of \textbf{ESAV-Gauss} is equal to the number of interpolation nodes. In terms of the prediction-correction approach, \textbf{ESAV-Gauss-PC} recovers the local error $\mathcal{O}(\tau^{2s+1})$ of the Gauss method. As expected in Table. \ref{Tab-4}, the smaller number of iterations required by \textbf{ESAV-Gauss-PC} makes it faster than the fully implicit Gauss method. And as the step size decreases, the number of iterations required also decreases. Fig. \ref{Fig-3} plots the CPU times corresponding to Table. \ref{Tab-4} for the sake of clarity.

\begin{table}[H]
\tabcolsep=12pt \small \renewcommand \arraystretch{1.3} \centering
\caption{Convergence tests of \textbf{SAV-CN} and \textbf{ESAV-CN} in space with $\tau=1e-05$ at $T=1$.}\label{Tab-2}
\begin{tabularx}{\textwidth}{*{7}{l}} \toprule
Schemes & $N_x\times N_y$ & $8\times8$ & $16\times16$ & $32\times32$ & $64\times64$ & $128\times128$\\ \midrule 
\multirow{3}*{\centering \textbf{SAV-CN}}
& L$^2$ error & 4.5457e-10 & 3.9004e-10 & 3.7237e-10 & 4.0874e-10 & 4.1974e-10\\
& L$^\infty$ error & 7.2610e-11 & 6.2875e-11 & 6.0072e-11 & 6.5728e-11 & 6.7738e-11\\
& CPU time & 33.63s & 44.93s & 79.35s & 213.59s & 668.90s\\ \midrule
\multirow{3}*{\textbf{ESAV-CN}}
& L$^2$ error & 8.4043e-11 & 8.6179e-11 & 1.0025e-10 & 7.6565e-11 & 7.4399e-11\\
& L$^\infty$ error & 1.4711e-11 & 1.4618e-11 & 1.6797e-11 & 1.3104e-11 & 1.2738e-11\\
& CPU time & 17.91s & 24.09s & 45.92s & 132.12s & 398.58s\\ \bottomrule
\end{tabularx}
\end{table}	

\begin{table}[H]
\tabcolsep=12pt \small \renewcommand \arraystretch{1.3} \centering
\caption{Convergence rates of \textbf{SAV-CN} and \textbf{ESAV-CN} in time with $N_x=N_y=64$ at $T=1$.}\label{Tab-3}
\begin{tabularx}{\textwidth}{*{7}{l}} \toprule
Schemes & $\tau$ & $0.001$ & $0.001/2$ & $0.001/4$ & $0.001/8$ & $0.001/16$\\ \midrule 
\multirow{5}*{\centering \textbf{SAV-CN}}
& L$^2$ error & 3.6589e-06 & 9.1551e-07 & 2.2898e-07 & 5.7261e-08 & 1.4322e-08\\
& Order & -- & 1.9988 & 1.9994 & 1.9996 & 1.9993\\
& L$^\infty$ error & 5.8233e-07 & 1.4571e-07 & 3.6443e-08 & 9.1134e-09 & 2.2796e-09\\
& Order & -- & 1.9988 & 1.9994 & 1.9996 & 1.9992\\
& CPU time & 2.38s & 4.35s & 8.67s & 17.32s & 34.54s\\ \midrule
\multirow{5}*{\textbf{ESAV-CN}}
& L$^2$ error & 1.0488e-06 & 2.6200e-07 & 6.5475e-08 & 1.6361e-08 & 4.0846e-09\\
& Order & -- & 2.0011 & 2.0005 & 2.0007 & 2.0020\\
& L$^\infty$ error & 1.6692e-07 & 4.1698e-08 & 1.0421e-08 & 2.6040e-09 & 6.5025e-10\\
& Order & -- & 2.0011 & 2.0005 & 2.0007 & 2.0017\\
& CPU time & 1.53s & 2.84s & 5.16s & 10.39s & 21.50s\\ \bottomrule
\end{tabularx}
\end{table}	

\begin{figure}[H]
\centering
\includegraphics[width=0.32\linewidth]{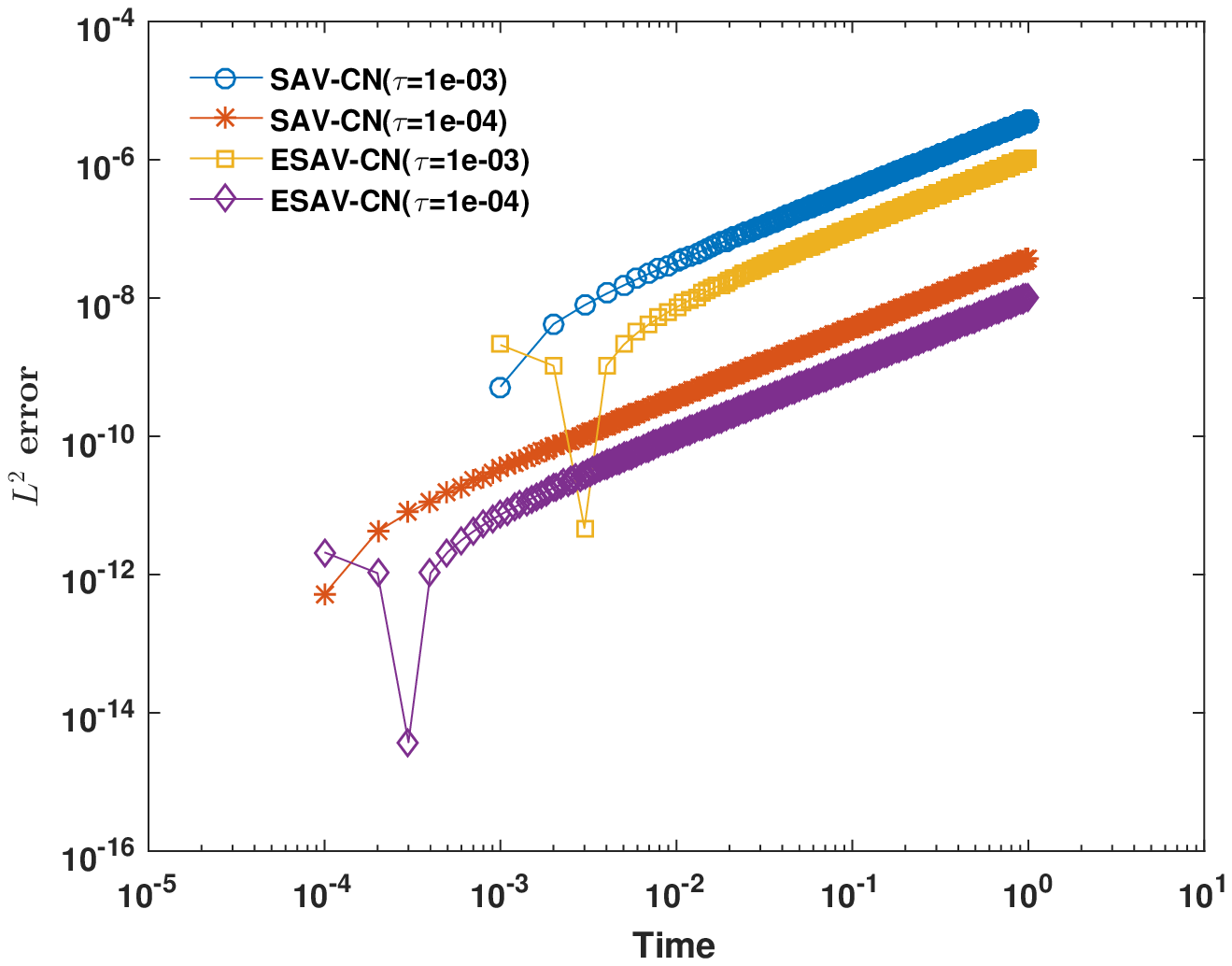}\hspace{-3.5mm}
\includegraphics[width=0.32\linewidth]{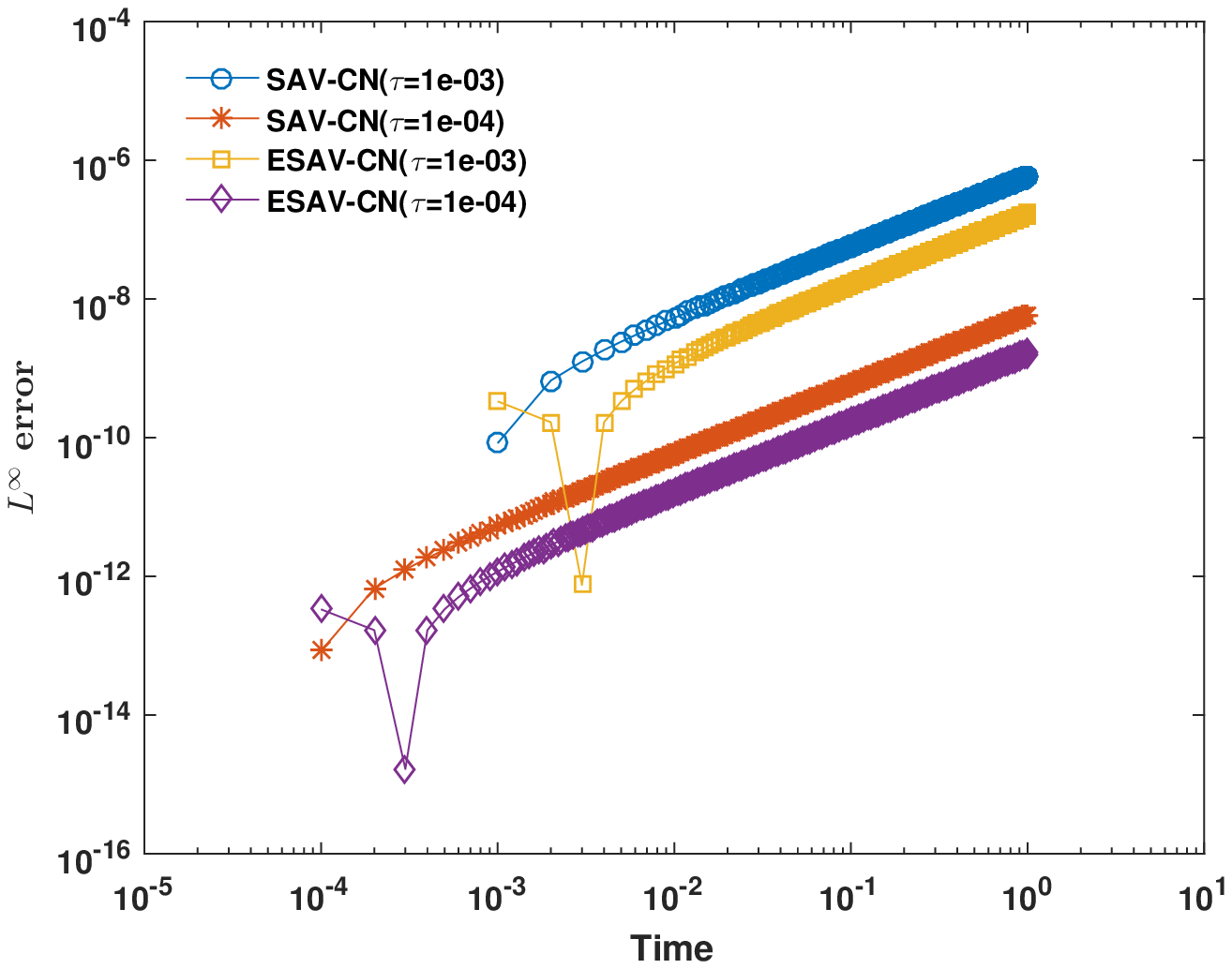}\hspace{-3.5mm}
\includegraphics[width=0.32\linewidth]{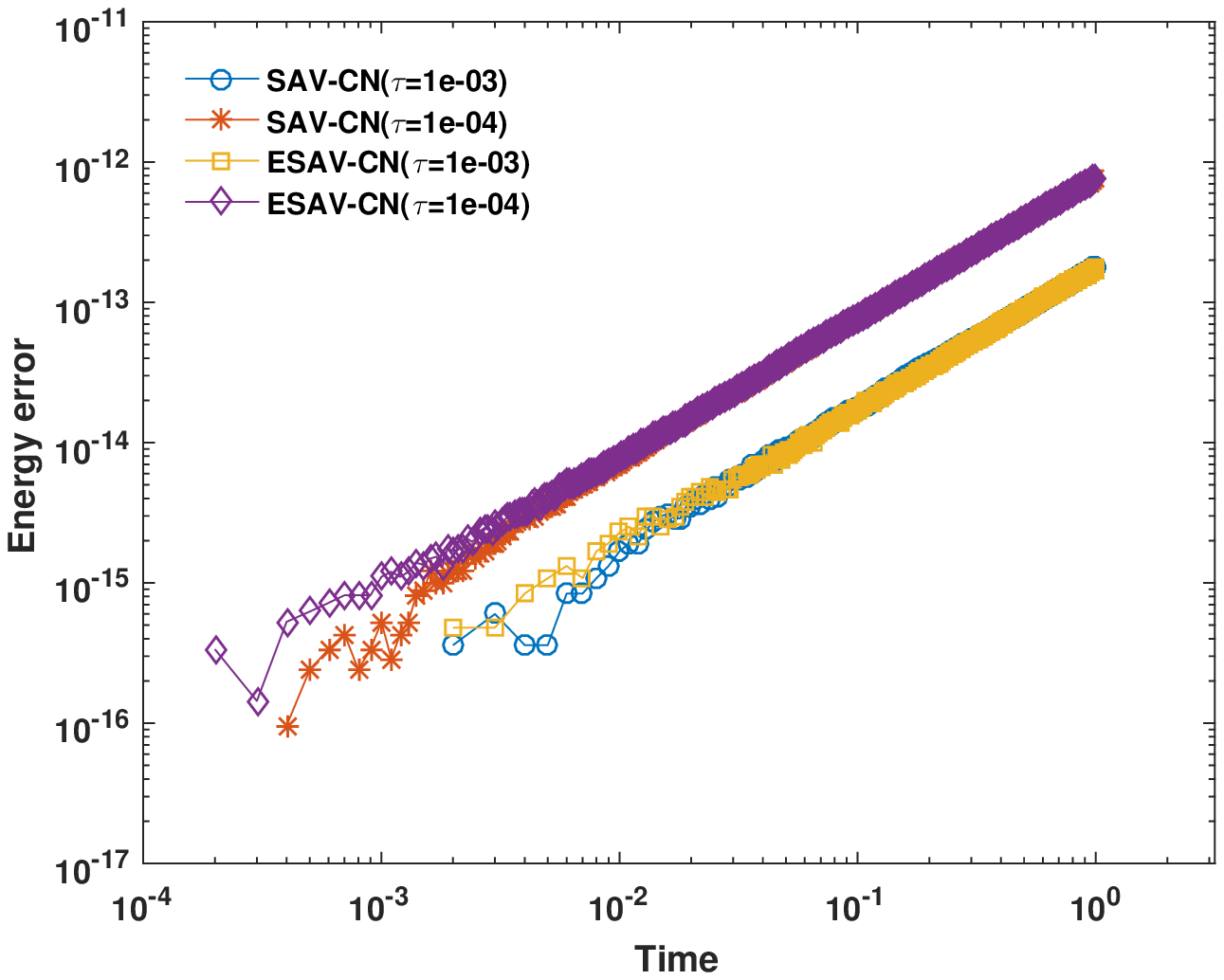}
\caption{Numerical error comparisons between \textbf{SAV-CN} and \textbf{ESAV-CN} with $N_x=N_y=64$ until $T=1$.}\label{Fig-1}
\end{figure}

\begin{figure}[H]
\centering
\includegraphics[width=0.42\linewidth]{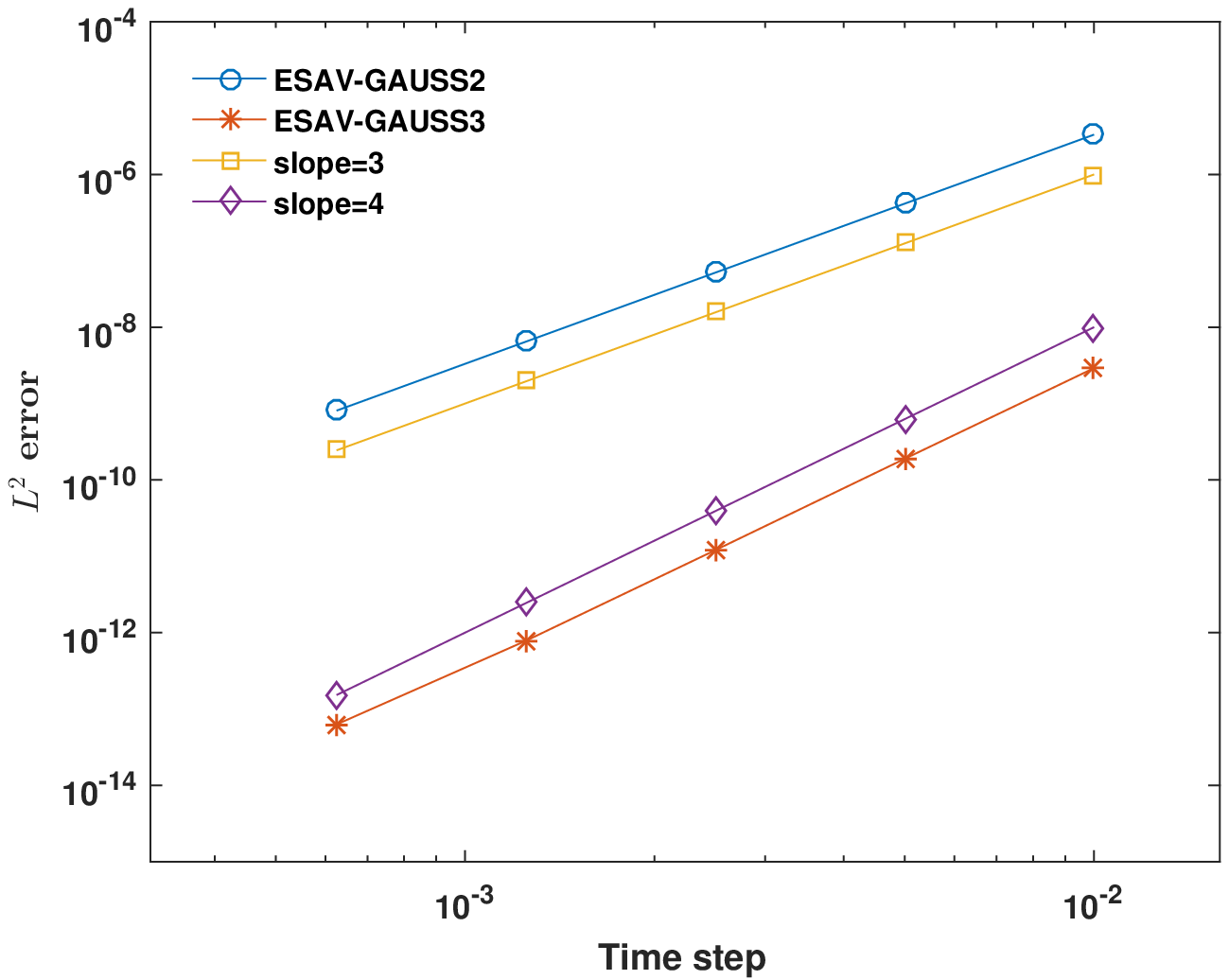}\hspace{-3.5mm}
\includegraphics[width=0.42\linewidth]{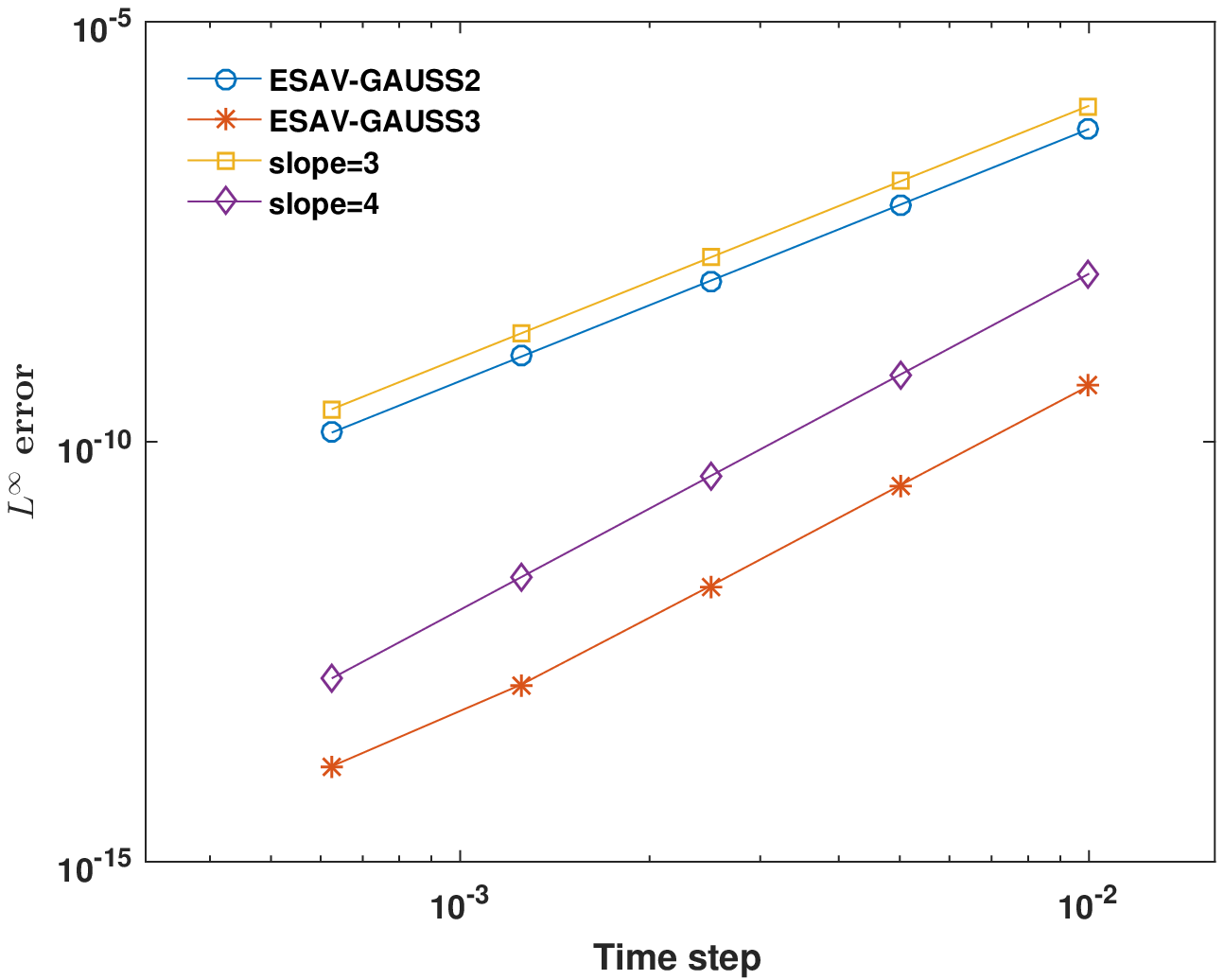}\\
\includegraphics[width=0.42\linewidth]{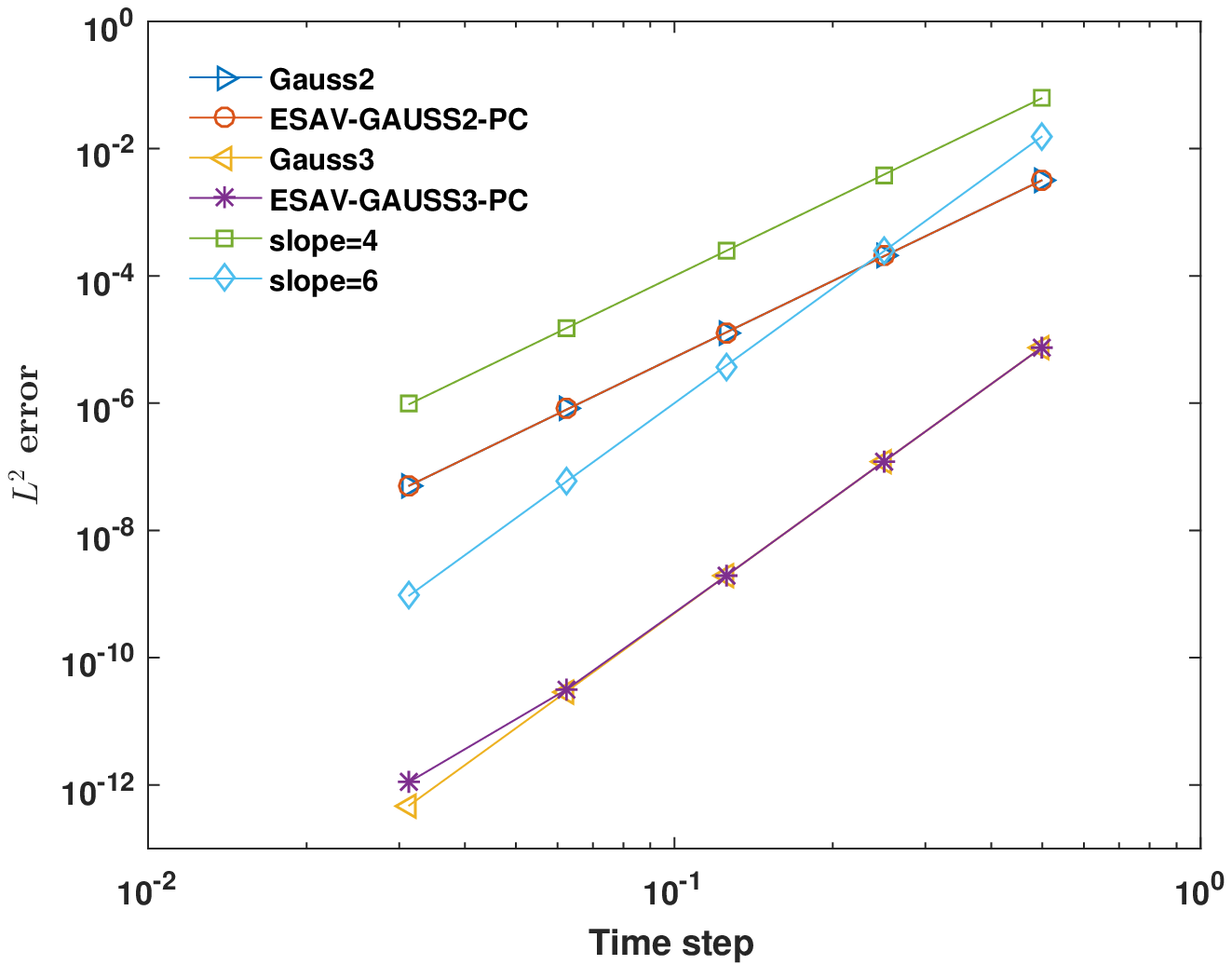}\hspace{-3.5mm}
\includegraphics[width=0.42\linewidth]{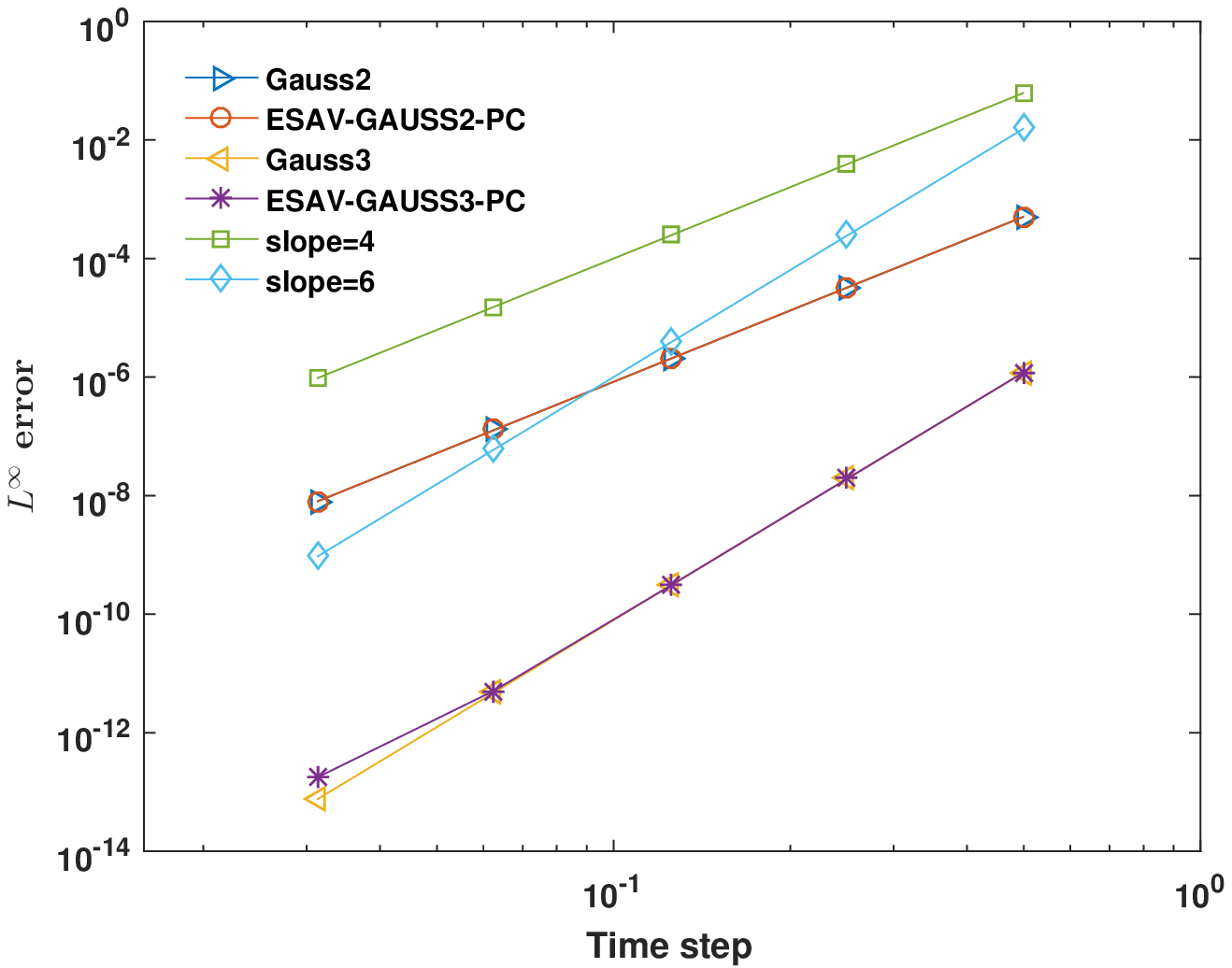}\vspace{-1mm}
\caption{Temporal convergence rates of \textbf{Gauss}, \textbf{ESAV-Guass} and \textbf{ESAV-Gauss-PC} with $N_x=N_y=8$ at $T=1$.}\label{Fig-2}
\end{figure}
\vspace{-4mm}
\begin{table}[H]
\tabcolsep=23pt \small \renewcommand \arraystretch{1.3} \centering
\caption{Comparisons of maximum iteration numbers between \textbf{Gauss} and \textbf{ESAV-Gauss-PC} at different temporal step sizes with $N_x=N_y=8$ until $T=100$.}\label{Tab-4}\vspace{-2mm}
\begin{tabularx}{\textwidth}{*{6}{l}} \toprule
Schemes & 0.2 & 0.2/2 & 0.2/4  & 0.2/8 & 0.2/16 \\ \midrule 
\multirow{1}*{\textbf{Gauss2}}
& 15 & 12 & 11 & 9 & 8\\ 
\multirow{1}*{\textbf{ESAV-Gauss2-PC}}
& 10 & 8 & 6 & 5 & 4\\ \midrule
\multirow{1}*{\textbf{Gauss3}}
& 14 & 12 & 10 & 9 & 8\\ 
\multirow{1}*{\textbf{ESAV-Gauss3-PC}}
& 9 & 6 & 5 & 4 & 3\\ \bottomrule
\end{tabularx}
\end{table}	 
\vspace{-4mm}
\begin{figure}[H]
\centering
\includegraphics[width=0.42\linewidth]{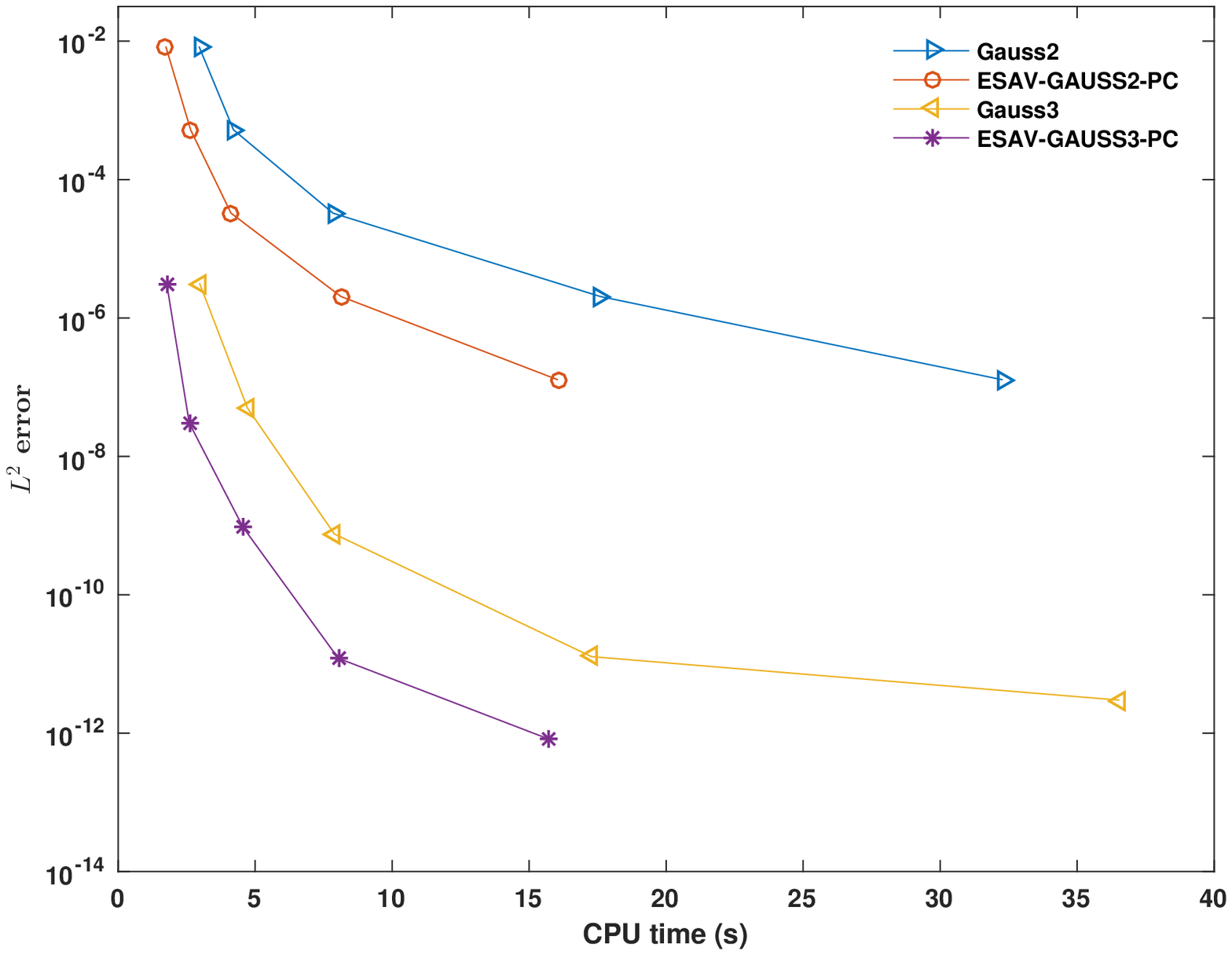}\hspace{-3.5mm}
\includegraphics[width=0.42\linewidth]{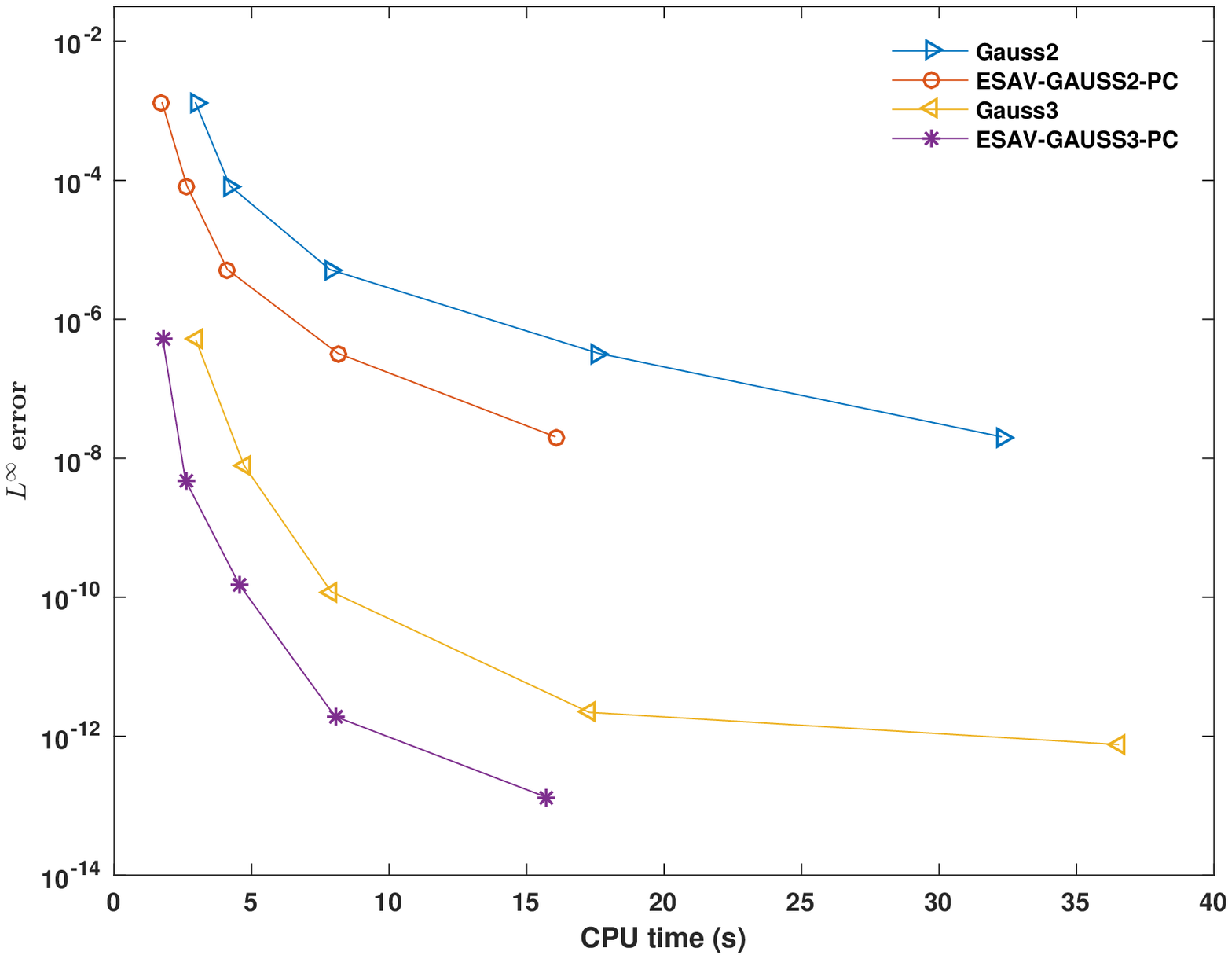}\vspace{-1mm}	
\caption{Comparisons of CPU times between \textbf{Gauss} and \textbf{ESAV-Gauss-PC} with $N_x=N_y=8$ until $T=100$.}\label{Fig-3}
\end{figure}

\noindent \textbf{Example 5.2.}~(The NLS equation) In this example, we consider the NLS equation \eqref{eq-5-1} that has a singular solution \cite{gong-17-FP-NLS-JCP} developed at $t=0.108$ with $\beta=1$. The initial condition is selected as
\begin{equation}\label{eq-5-4}
u(x,y,0)=\big(1+\mbox{sin}(x)\big)\big(2+\mbox{sin}(y)\big).
\end{equation} 
Our numerical experiments are carried out on $\Omega=[0,2\pi)\times[0,2\pi)$ with $(2\pi,2\pi)$-periodic boundary condition. Fig. \ref{Fig-4} plots the singular solutions of \textbf{SAV-CN} and \textbf{ESAV-CN} and the corresponding contours, while these numerical plots completely matches the one in \cite{gong-17-FP-NLS-JCP}. The numerical trajectories of the higher-order schemes are almost the same as Fig. \ref{Fig-4}, and hereafter we only take these two second-order schemes as examples. For the simulation until time $T=1$ in Fig. \ref{Fig-5}, \textbf{SAV-CN} present the wrong behavior with $N_x=N_y=128$ and the time step $\tau=0.0001$ after $t=0.2$, although other schemes still maintain the modified energy better. This also reflects that other schemes have better stability than \textbf{SAV-CN} for this example.
\begin{figure}[H]
\centering
\includegraphics[width=0.32\linewidth]{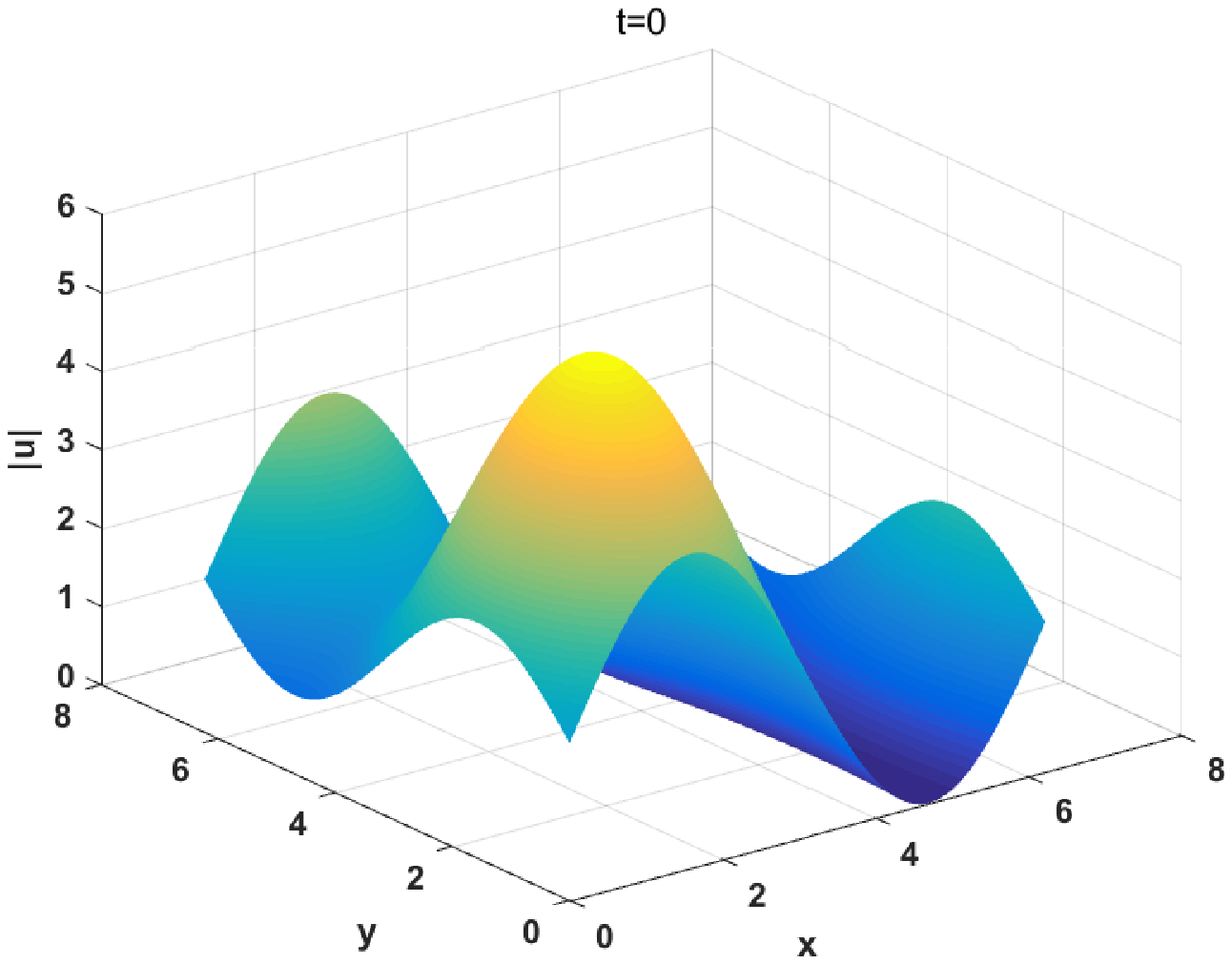}\hspace{-3.5mm}
\includegraphics[width=0.32\linewidth]{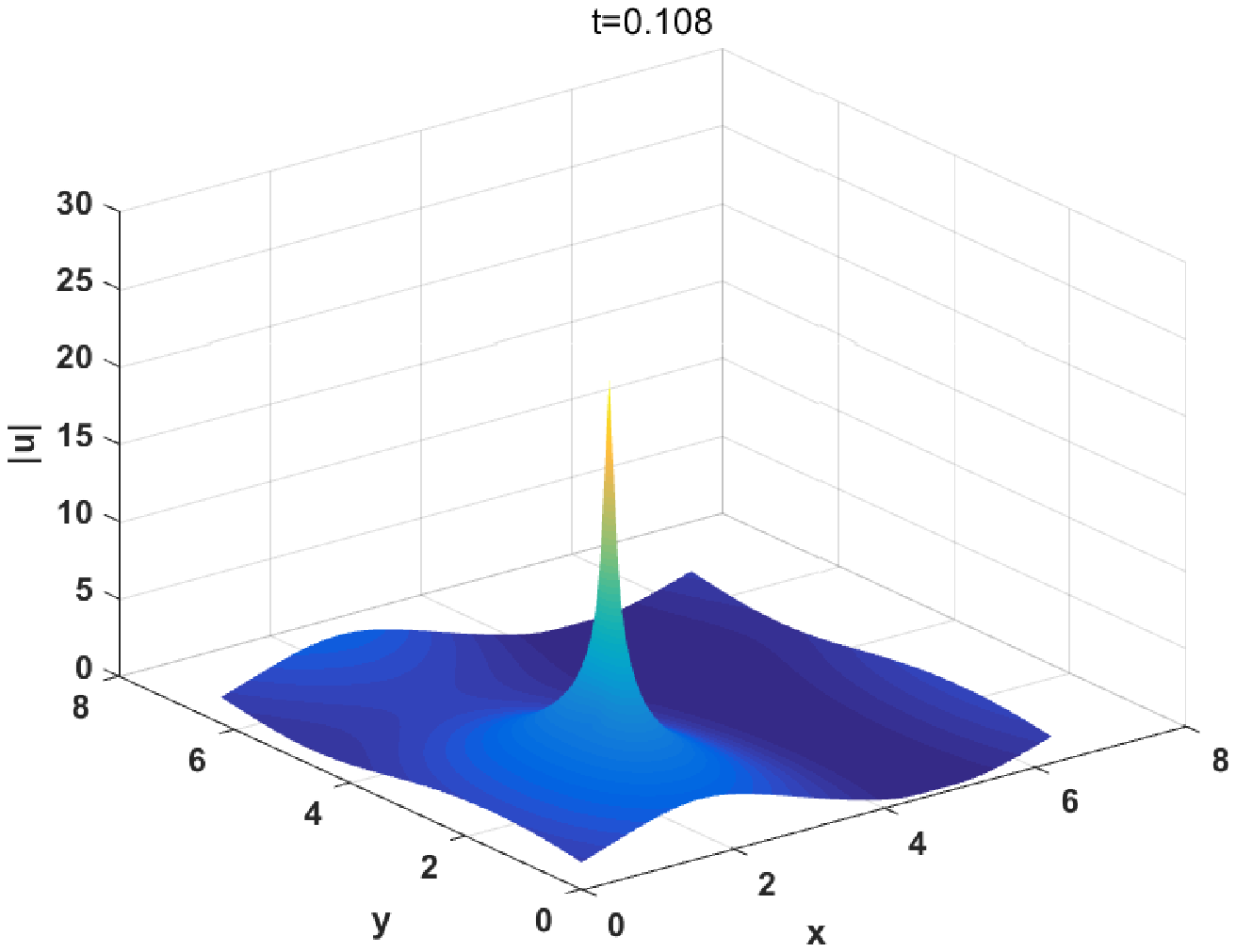}\hspace{-3.5mm}
\includegraphics[width=0.32\linewidth]{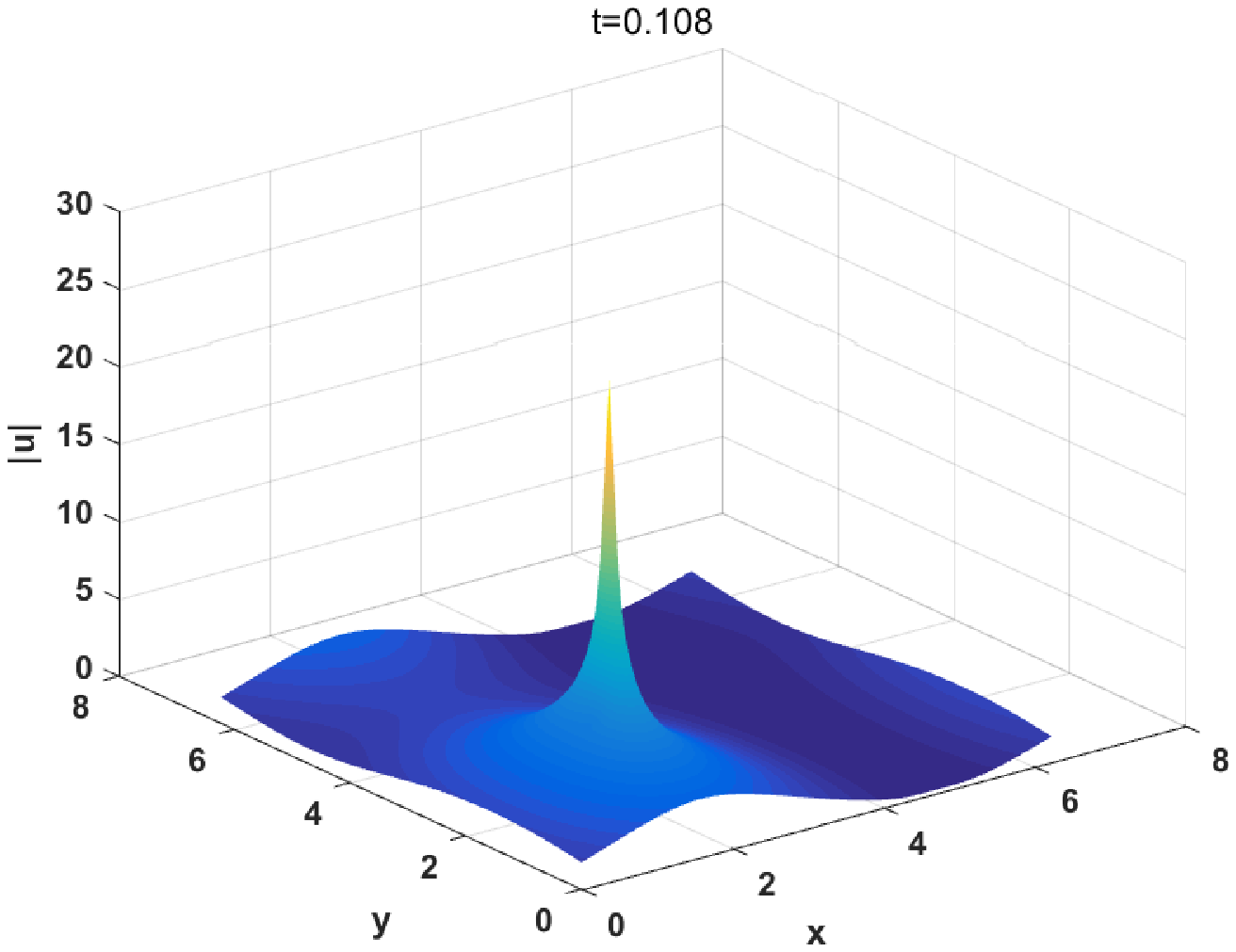}
\includegraphics[width=0.32\linewidth]{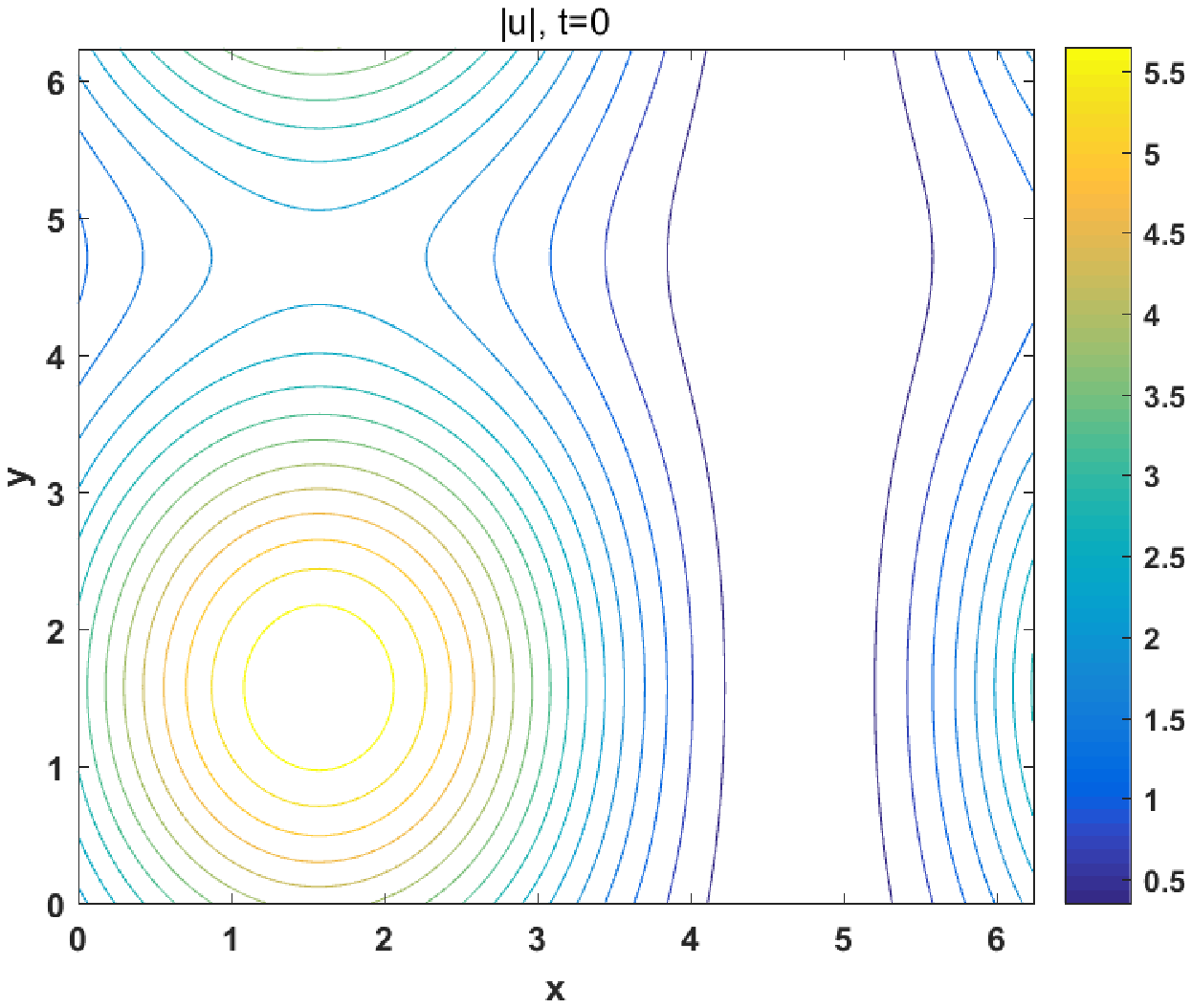}\hspace{-3.5mm}
\includegraphics[width=0.32\linewidth]{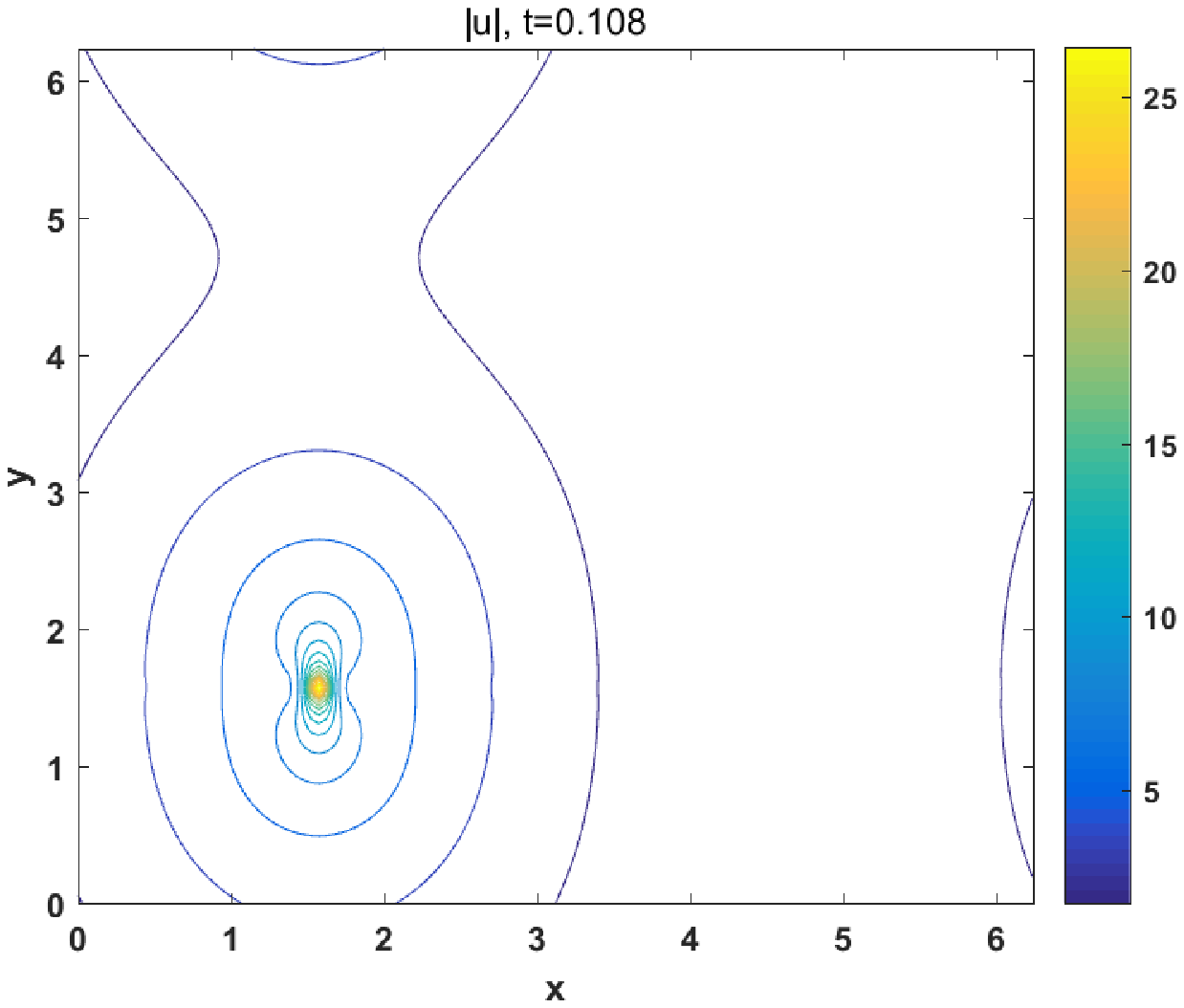}\hspace{-3.5mm}
\includegraphics[width=0.32\linewidth]{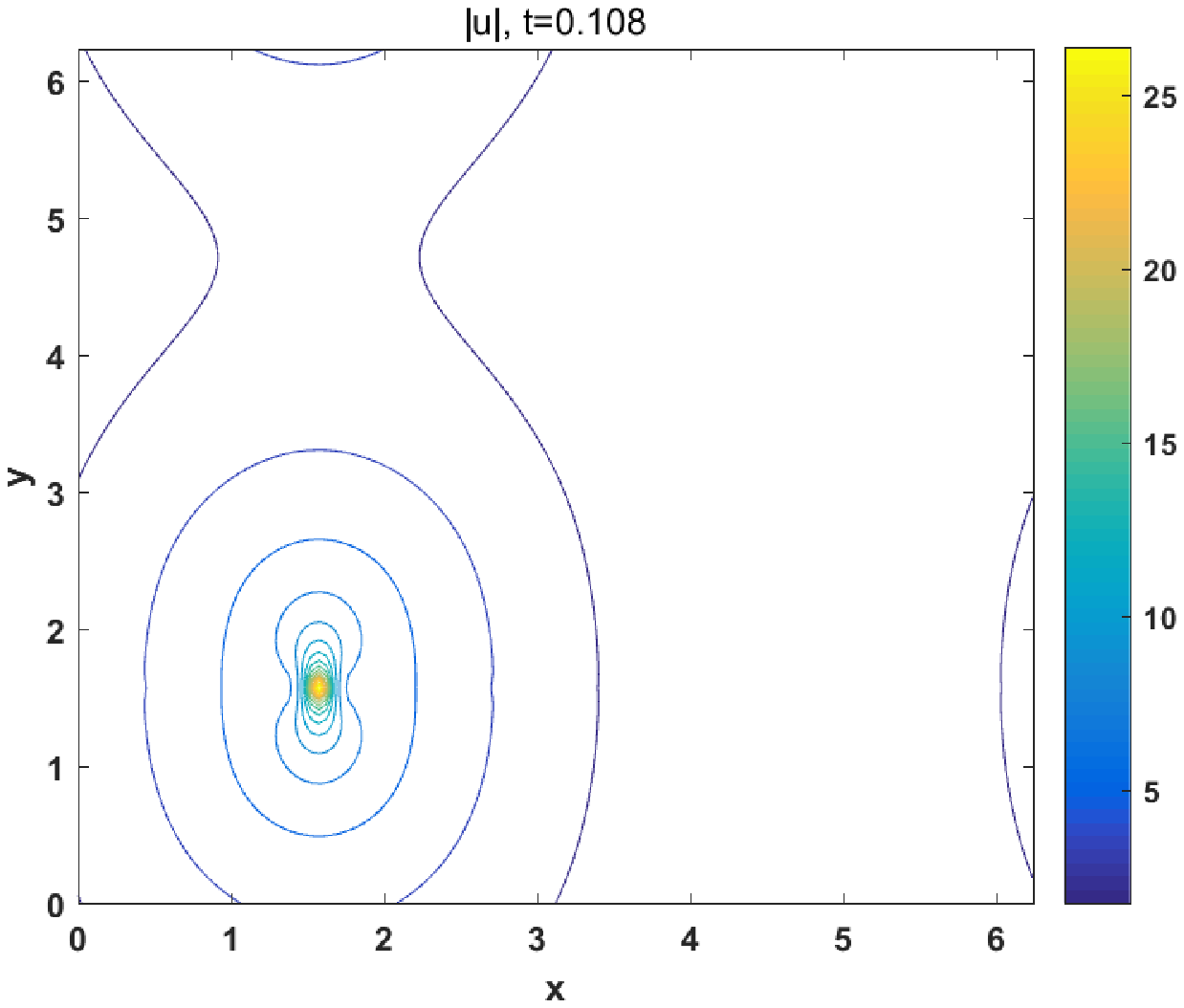}	
\caption{Surfaces and contours of modulus of the initial condition and singular solutions for \textbf{SAV-CN} (second column) and \textbf{ESAV-CN} (third column) with $N_x=N_y=128$ and $\tau=0.0001$.}\label{Fig-4}
\end{figure}

\begin{figure}[H]
\centering
\includegraphics[width=0.42\linewidth]{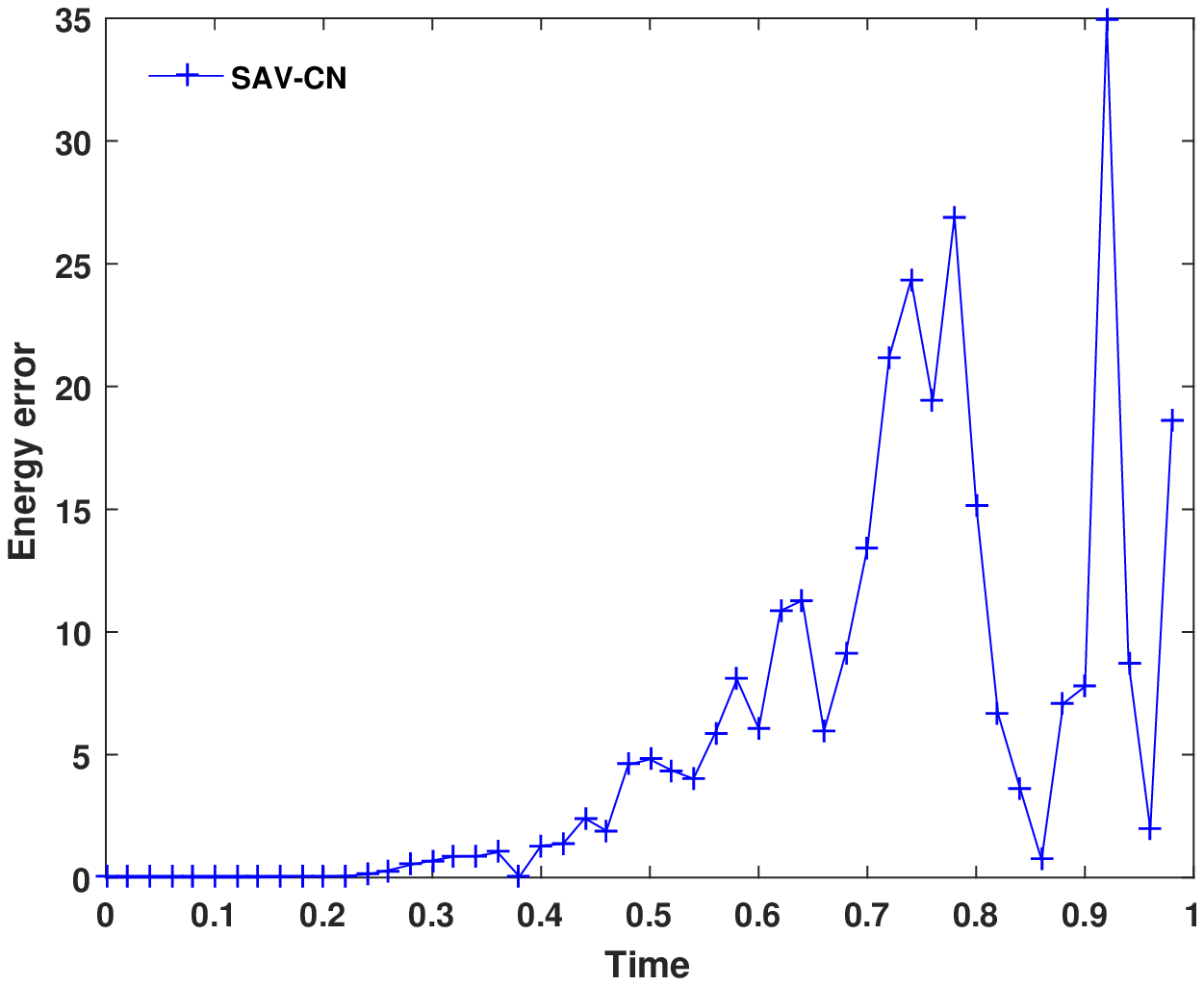}\hspace{-3.5mm}
\includegraphics[width=0.42\linewidth]{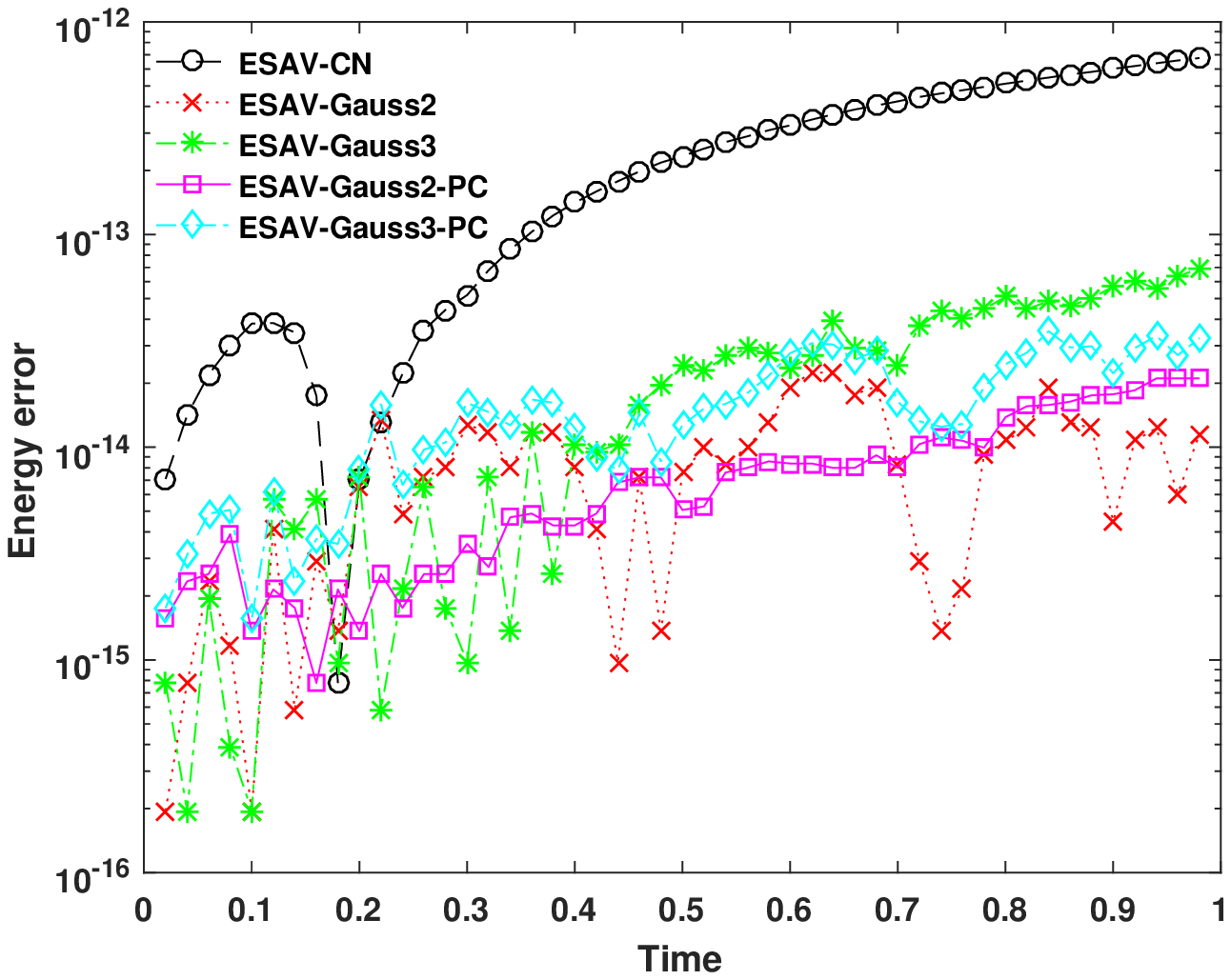}
\caption{The modified energy errors of \textbf{SAV-CN}~(left)~and other schemes~(right)~with $N_x=N_y=128$ and $\tau=0.0001$ until $T=1$.}\label{Fig-5}
\end{figure}

\noindent \textbf{Example 5.3.}~(The SG equation) We test the the SG equation in the form
\begin{equation}\label{eq-5-5}
u_{tt}-\triangle u+\phi(x,y)\ \mbox{sin}\left(u\right)=0,\quad (x,y)\in\Omega\in\mathcal{R}^2,\ t\in (0,T]
\end{equation} 
with $\phi(x,y)=1$ and periodic boundary conditions. The following two different initial conditions are carried out by selecting
\begin{itemize}
\item circular ring soliton \cite{jiang-19-SG-IEQ-JSC,cai-19-SG-NB-JCP}~(see Fig. \ref{Fig-6}) :
\begin{align}\label{eq-5-6}
\aligned
&u(x,y,0)=4\ \mbox{tan}^{-1}\bigg(\mbox{exp}\left(3-\sqrt{x^2+y^2}\right)\bigg),\quad \Omega=[-7,7)\times[-7,7),\\
&u_t(x,y,0)=0,
\endaligned
\end{align}
\item collision of four circular solitons \cite{jiang-19-SG-IEQ-JSC,cai-19-SG-NB-JCP}~(see Fig. \ref{Fig-7}):
\begin{align}\label{eq-5-7}
\aligned
&u(x,y,0)=4\ \mbox{tan}^{-1}\bigg[\mbox{exp}\left(3-\sqrt{(x+3)^2+(y+7)^2}\right)/0.436\bigg],\quad \Omega=[-30,10)\times[-30,10),\\
&u_t(x,y,0)=4.13\ \mbox{sech}\bigg[\mbox{exp}\left(3-\sqrt{(x+3)^2+(y+7)^2}\right)/0.436\bigg].
\endaligned
\end{align}
\end{itemize} 

Fig. \ref{Fig-6} and Fig. \ref{Fig-7} depict the initial conditions and the evolution of the two different soliton waves in terms of $\mbox{sin}(u/2)$ at different times. Here we only present the numerical solutions of \textbf{ESAV-CN}, while other schemes are similar. The obtained numerical solutions are in good agreement with the results \cite{jiang-19-SG-IEQ-JSC,cai-19-SG-NB-JCP}. In Fig. \ref{Fig-7}, the simulation is performed to $12.5$ seconds with the reproduction of symmetry conditions along the lines $x=-10$ and $y=10$. \textbf{ESAV-CN} correctly characterizes the collision of four expanding circular ring solitons and reflects an extremely complex interaction with rapidly varying values of $u$ in the center. Subsequently, in Fig. \ref{Fig-8}, by virtue of the fast implementation of linearly implicit schemes, we sketch the energy-preserving evolution of all the schemes under these two different initial conditions over a long time interval.
\begin{figure}[H]
\centering
\includegraphics[width=0.32\linewidth]{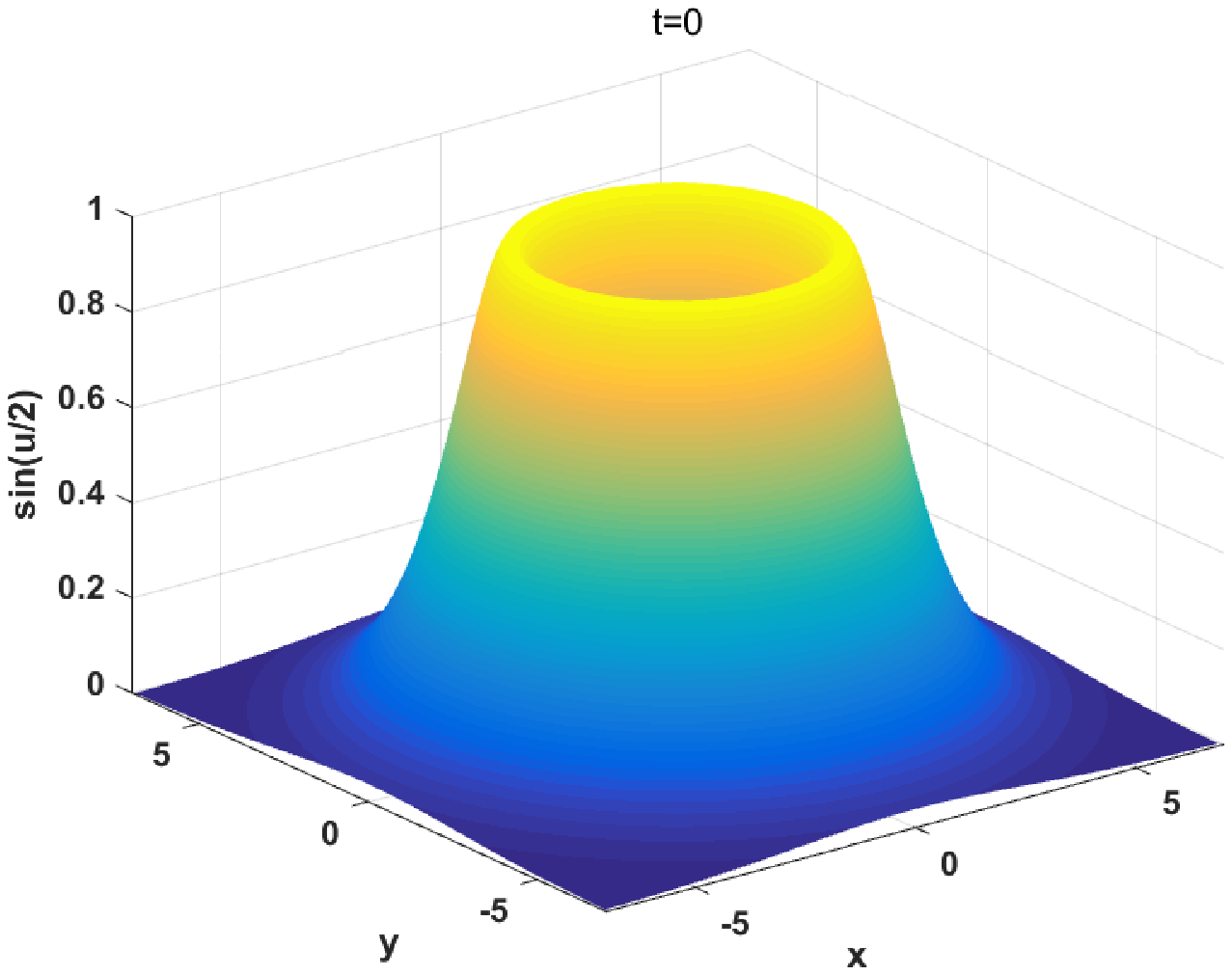}\hspace{-3.5mm}
\includegraphics[width=0.32\linewidth]{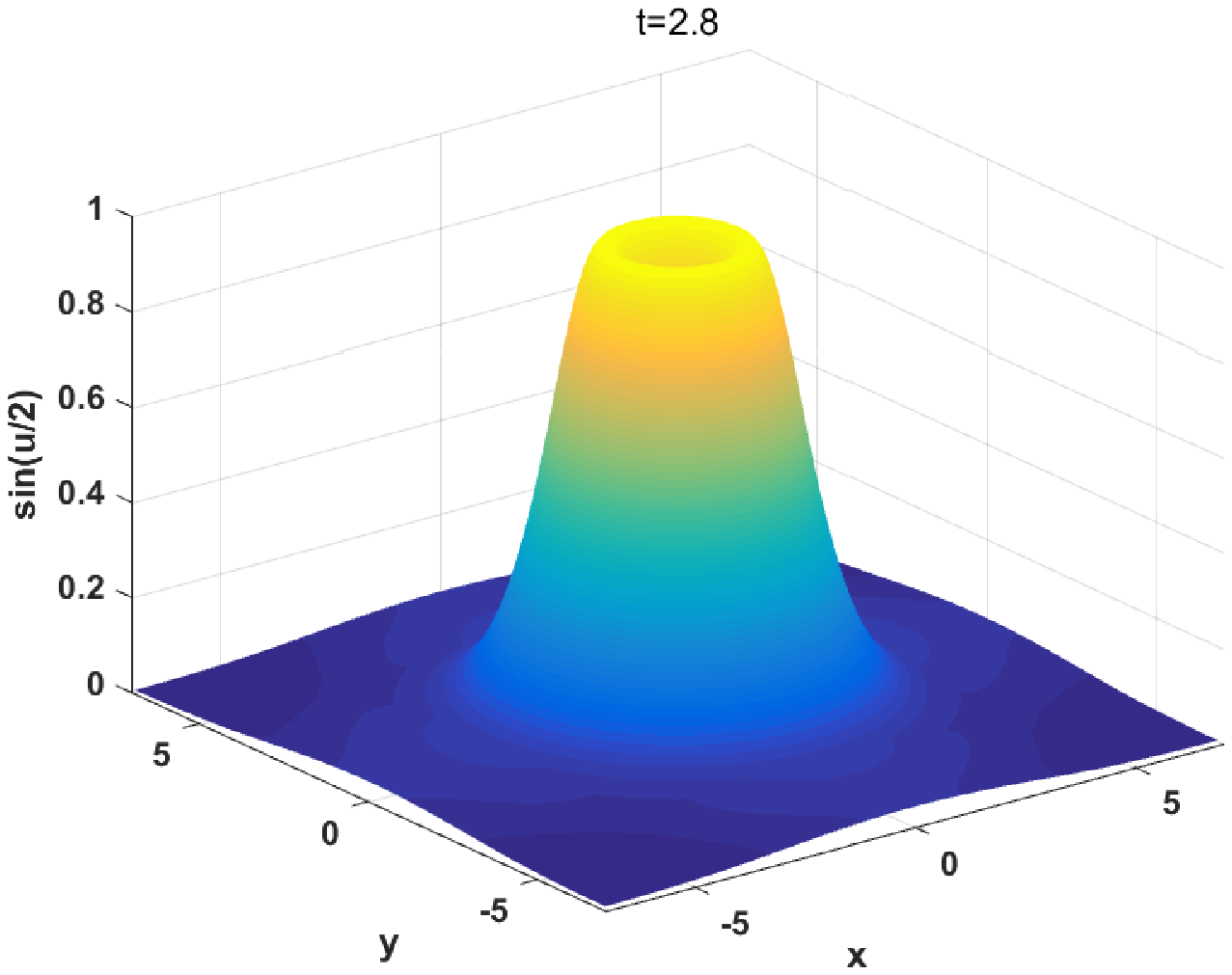}\hspace{-3.5mm}
\includegraphics[width=0.32\linewidth]{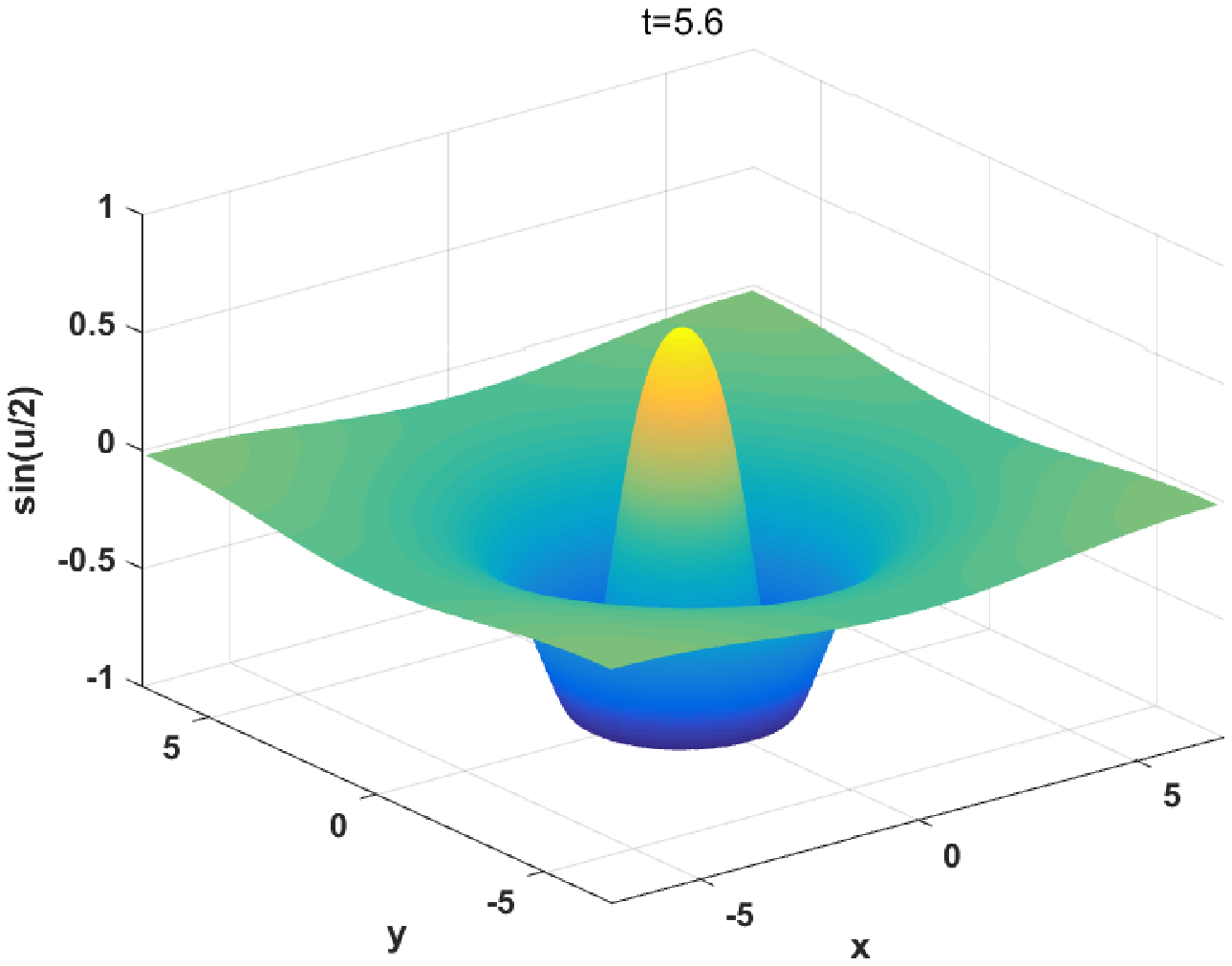}
\includegraphics[width=0.32\linewidth]{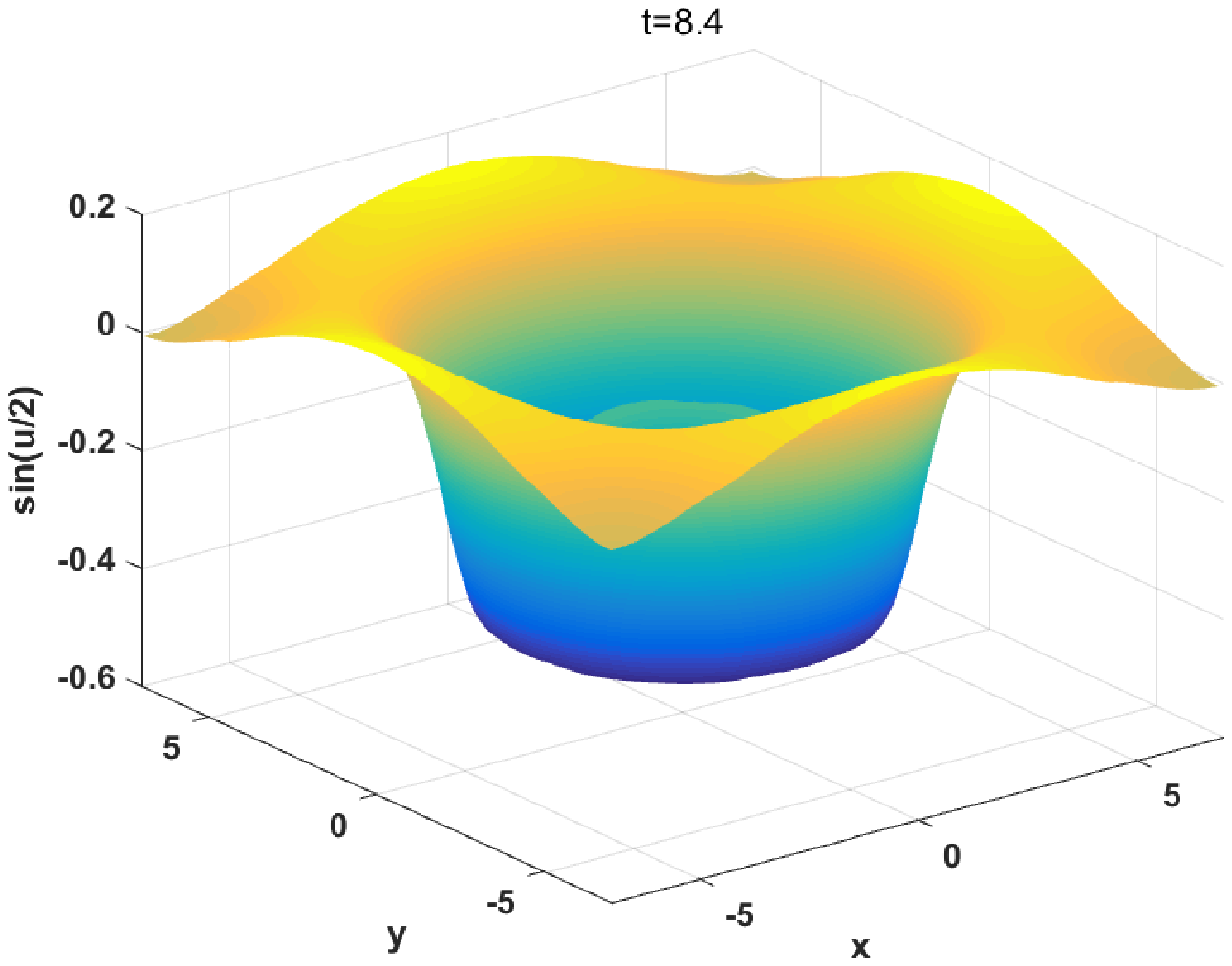}\hspace{-3.5mm}
\includegraphics[width=0.32\linewidth]{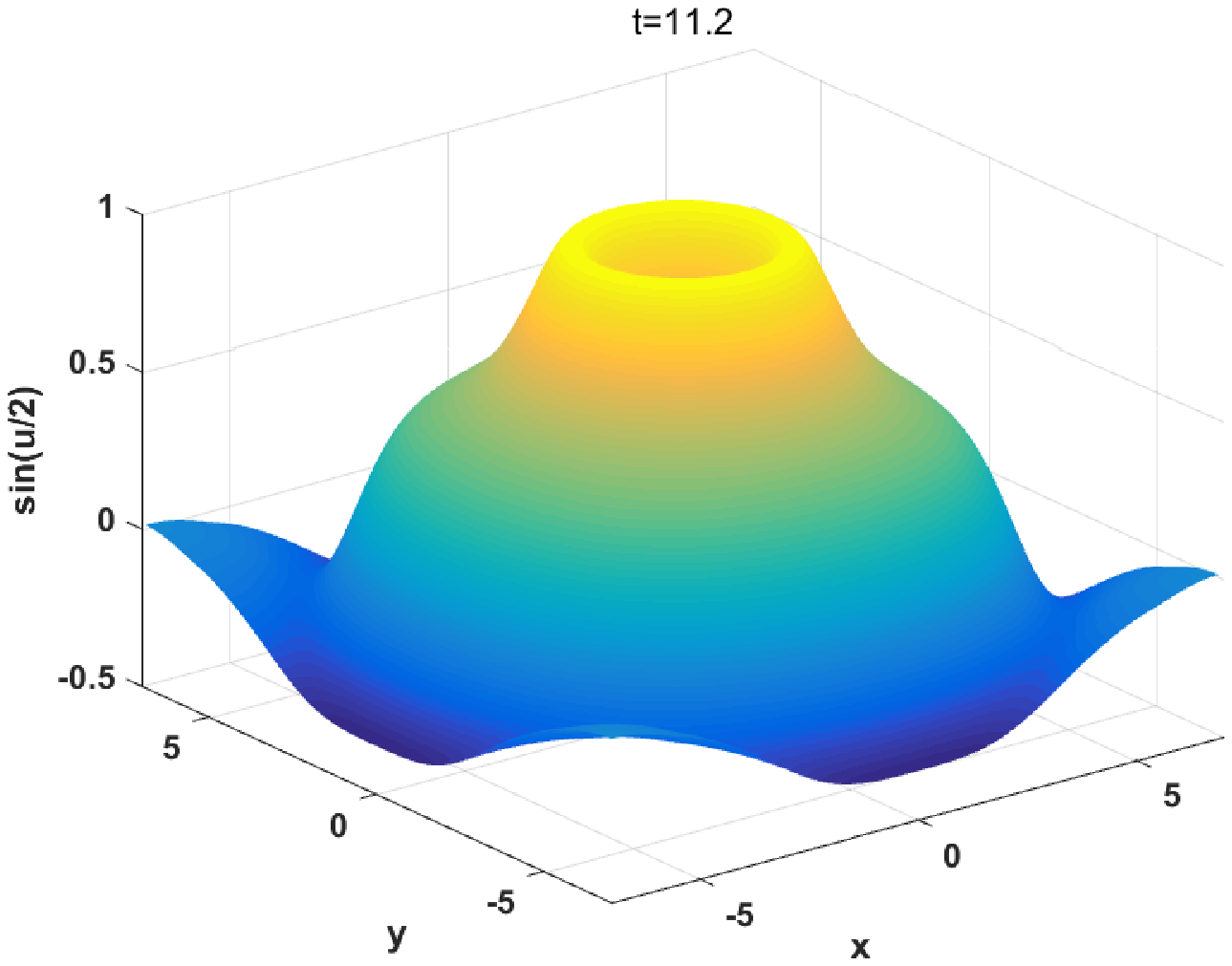}\hspace{-3.5mm}
\includegraphics[width=0.32\linewidth]{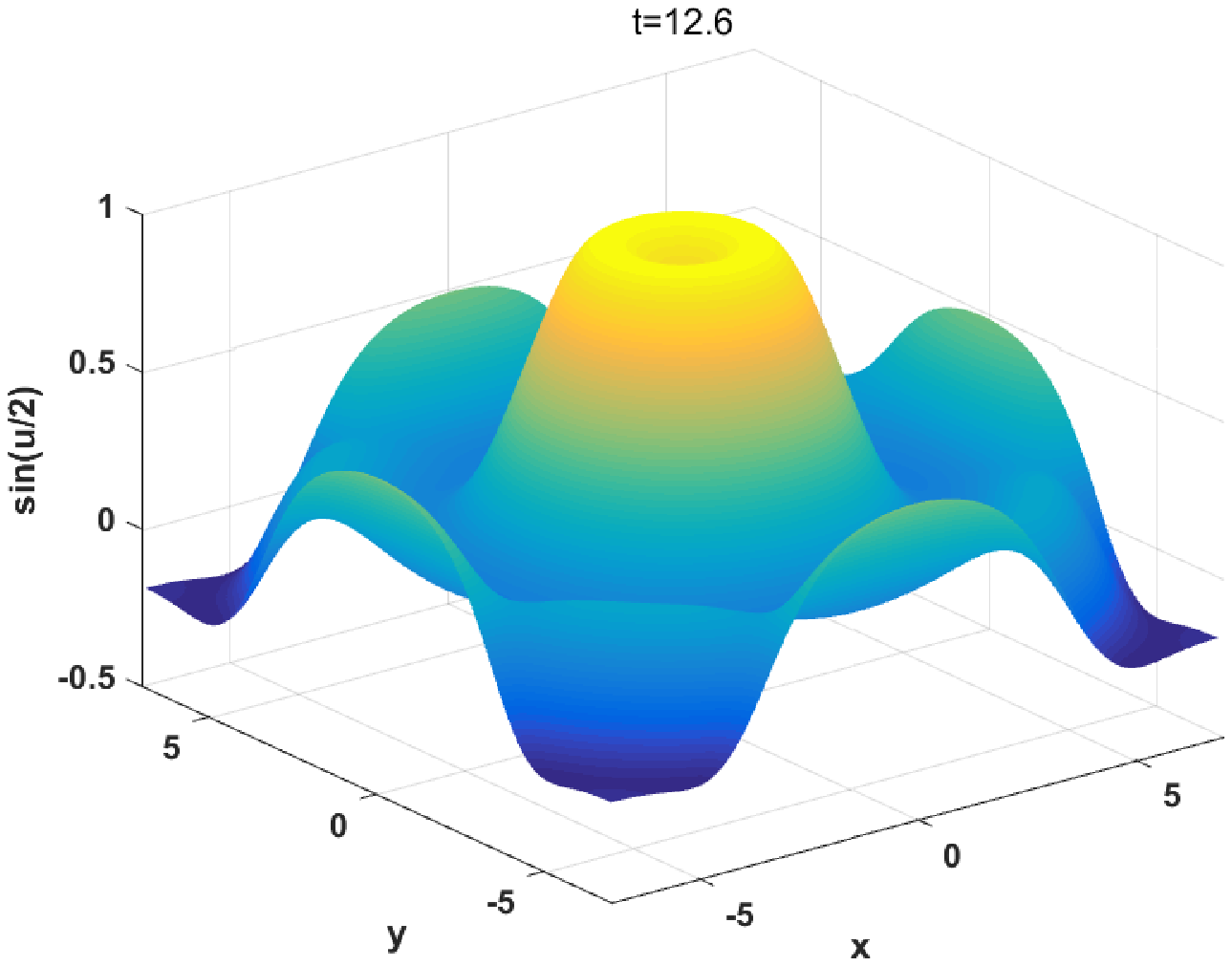}
\caption{Circular ring soliton: the initial condition and numerical solutions of \textbf{ESAV-CN} in terms of $\mbox{sin}(u/2)$ at different times with $N_x=N_y=128$ and $\tau=0.01$.}\label{Fig-6}
\end{figure}

\begin{figure}[H]
\centering
\includegraphics[width=0.32\linewidth]{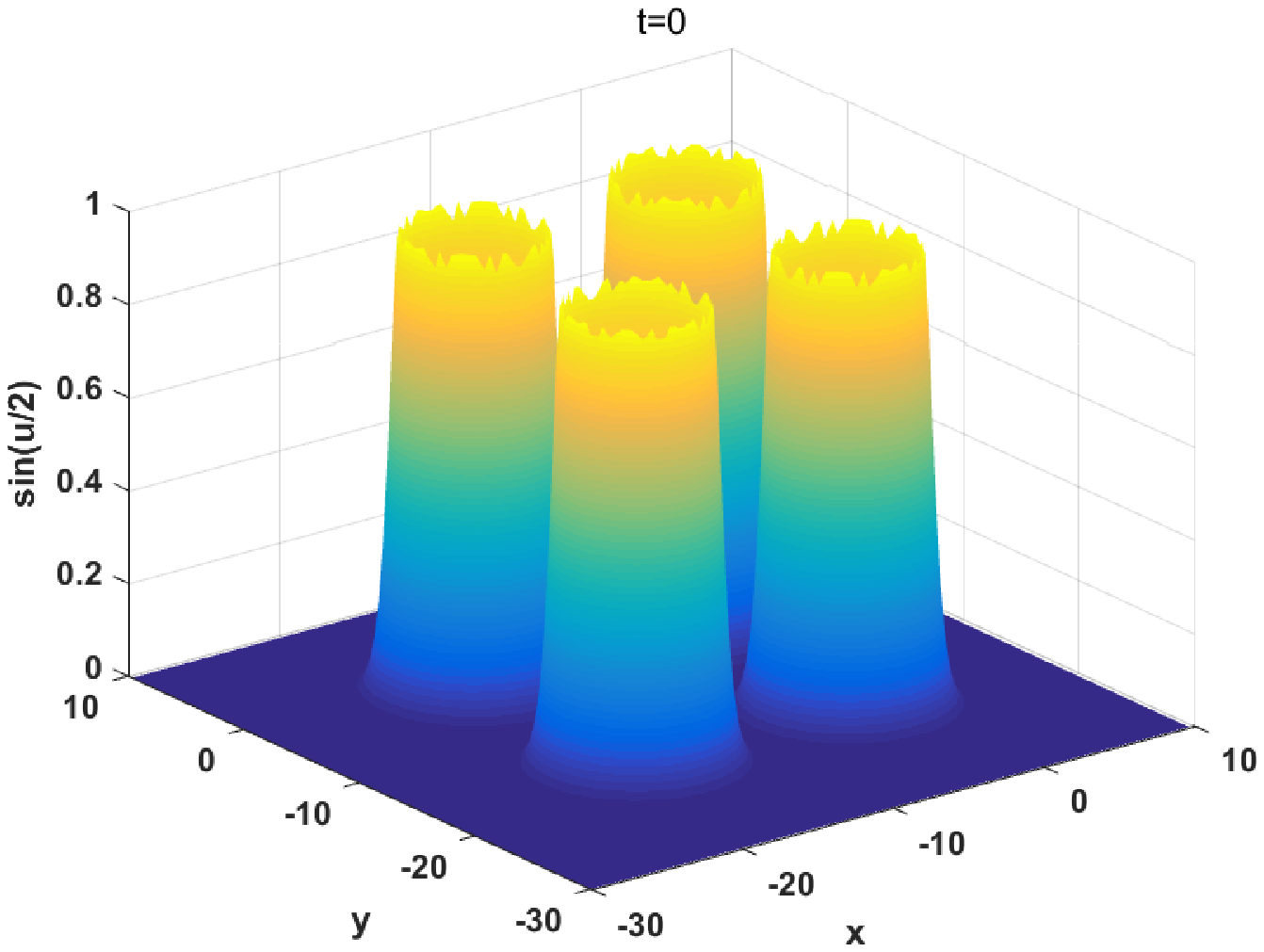}\hspace{-3.5mm}
\includegraphics[width=0.32\linewidth]{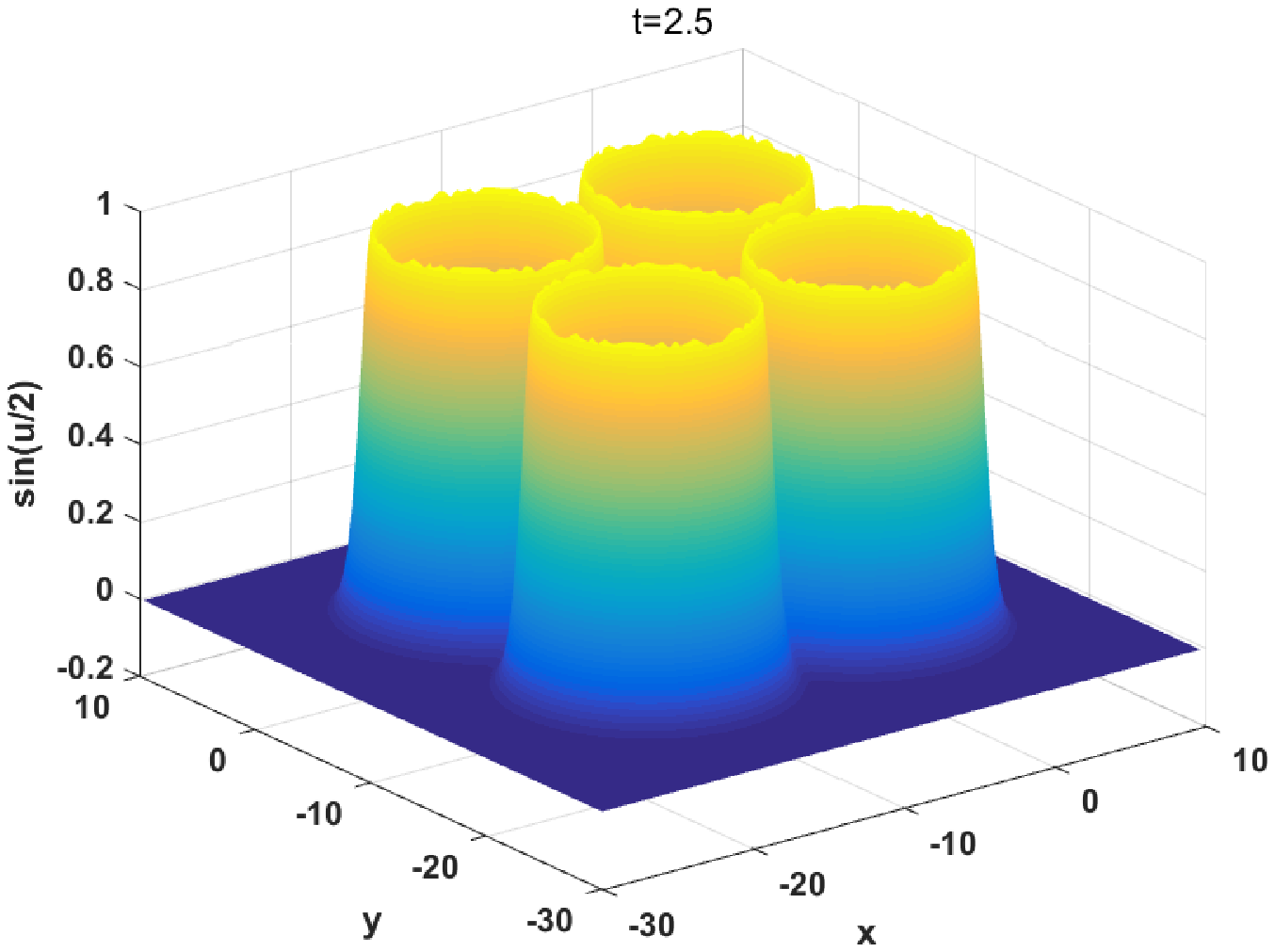}\\
\includegraphics[width=0.32\linewidth]{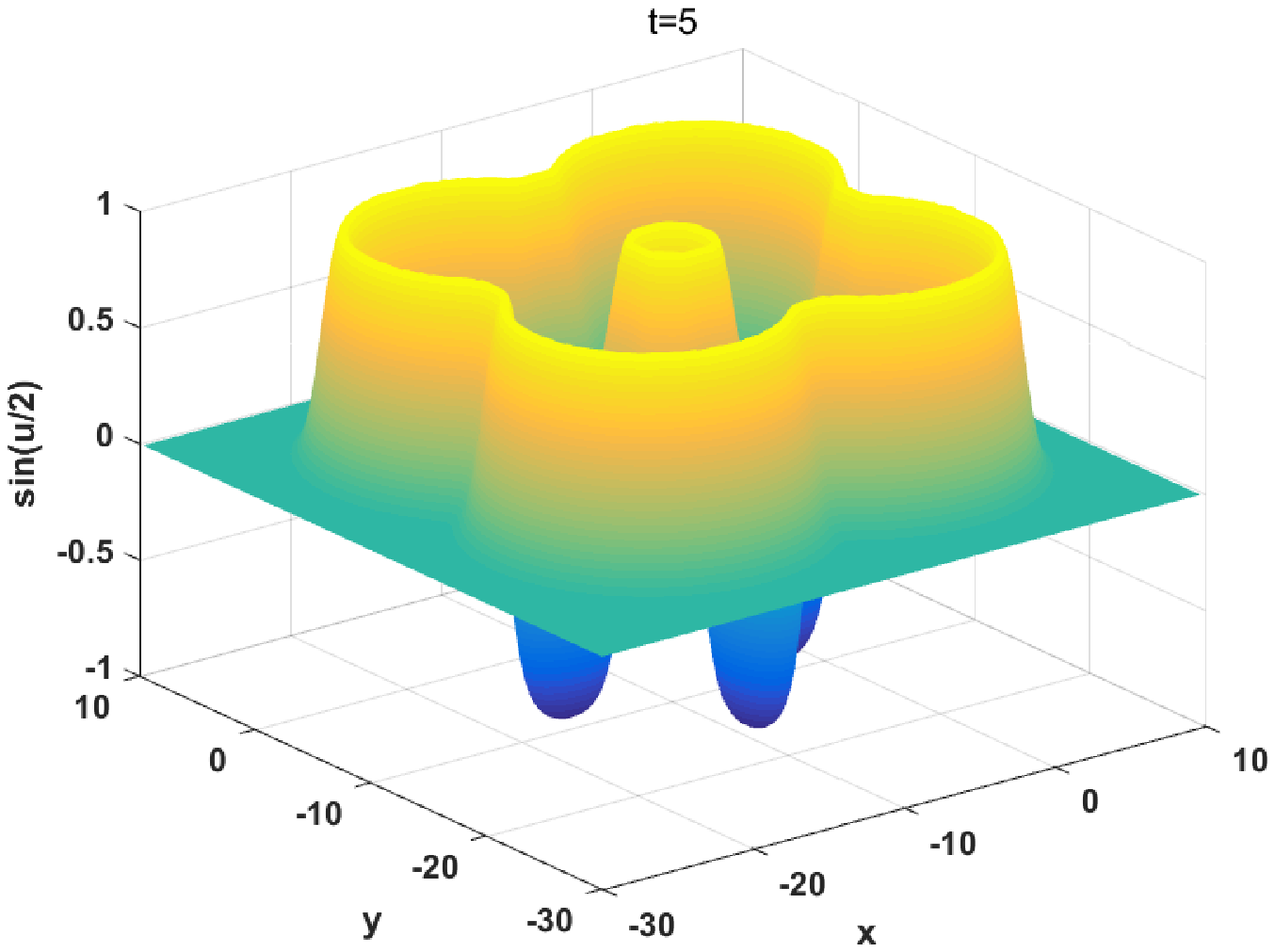}\hspace{-3.5mm}
\includegraphics[width=0.32\linewidth]{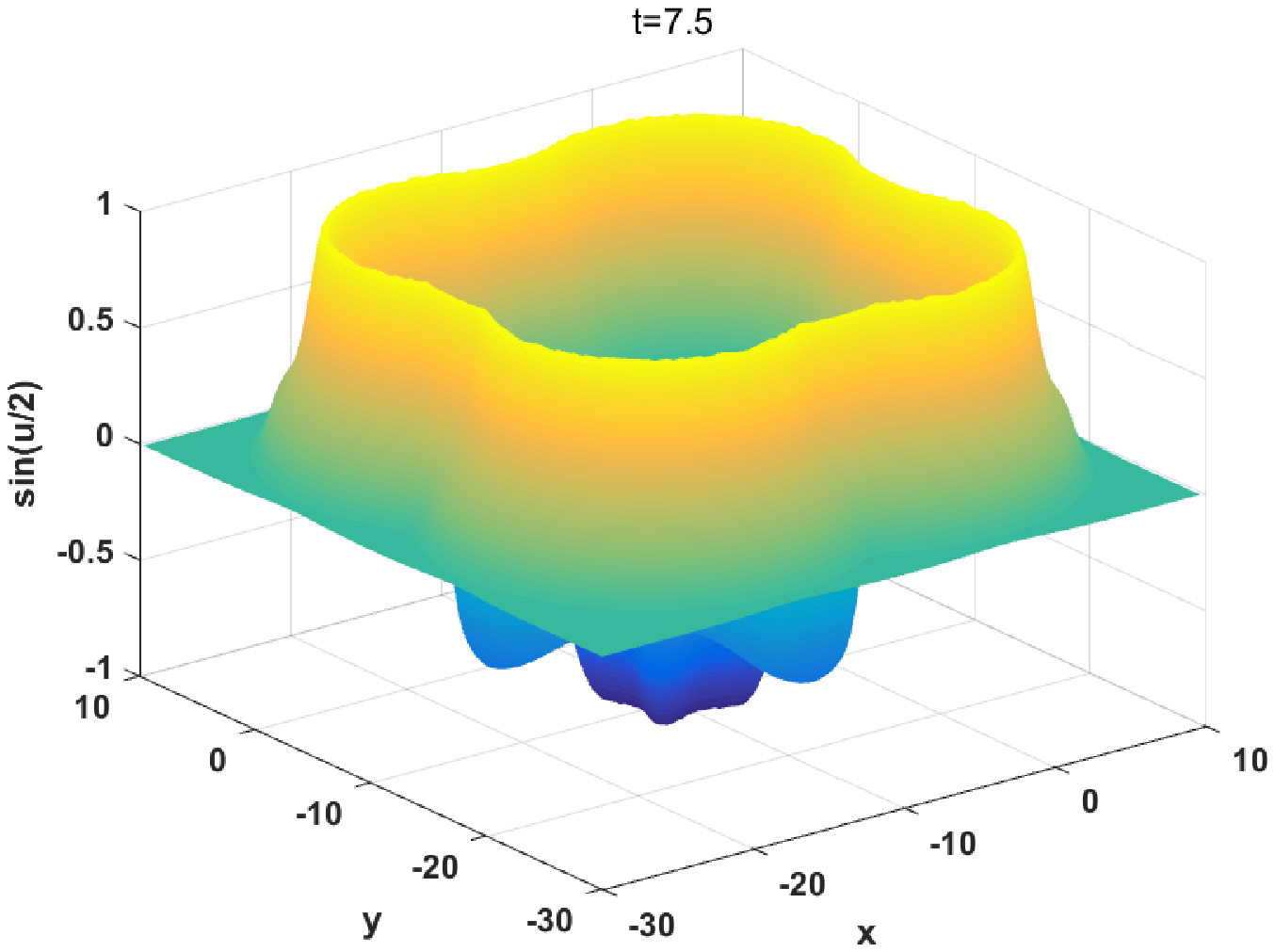}\hspace{-3.5mm}
\includegraphics[width=0.32\linewidth]{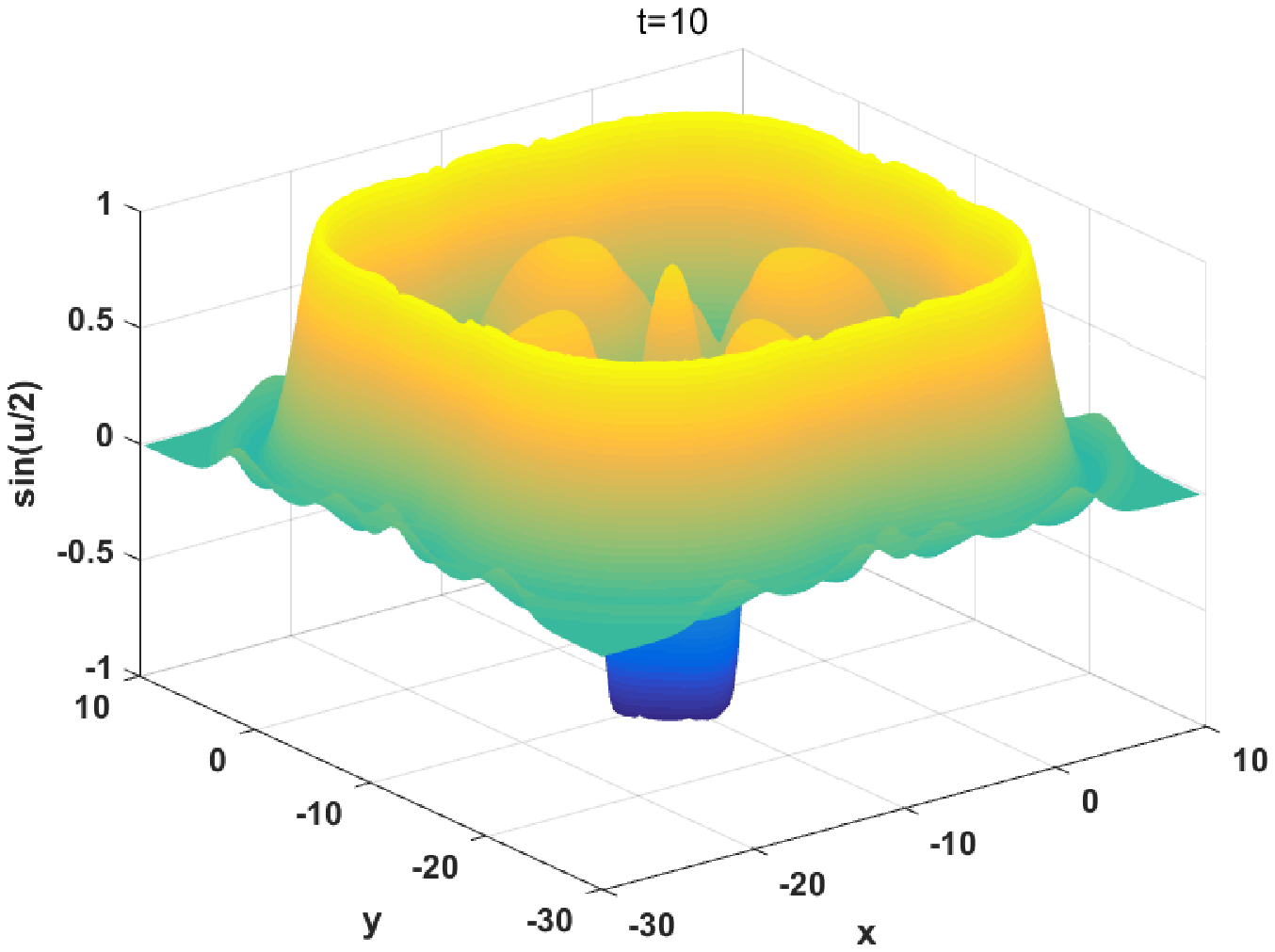}
\caption{Collision of four circular solitons: the initial condition and numerical solutions of \textbf{ESAV-CN} at different times with $N_x=N_y=128$ and $\tau=0.01$.}\label{Fig-7}
\end{figure}

\begin{figure}[H]
\centering
\includegraphics[width=0.42\linewidth]{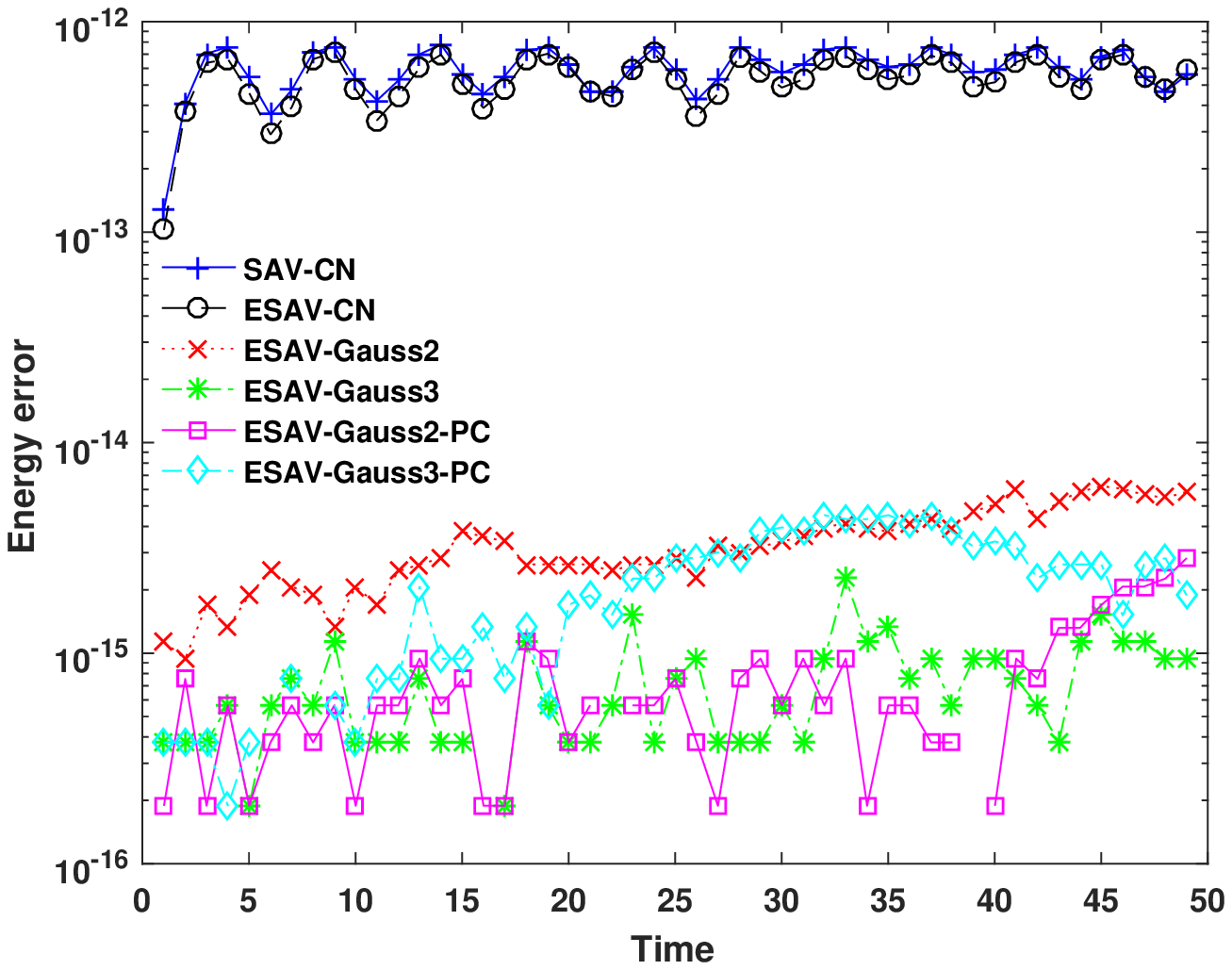}\hspace{-3.5mm}
\includegraphics[width=0.42\linewidth]{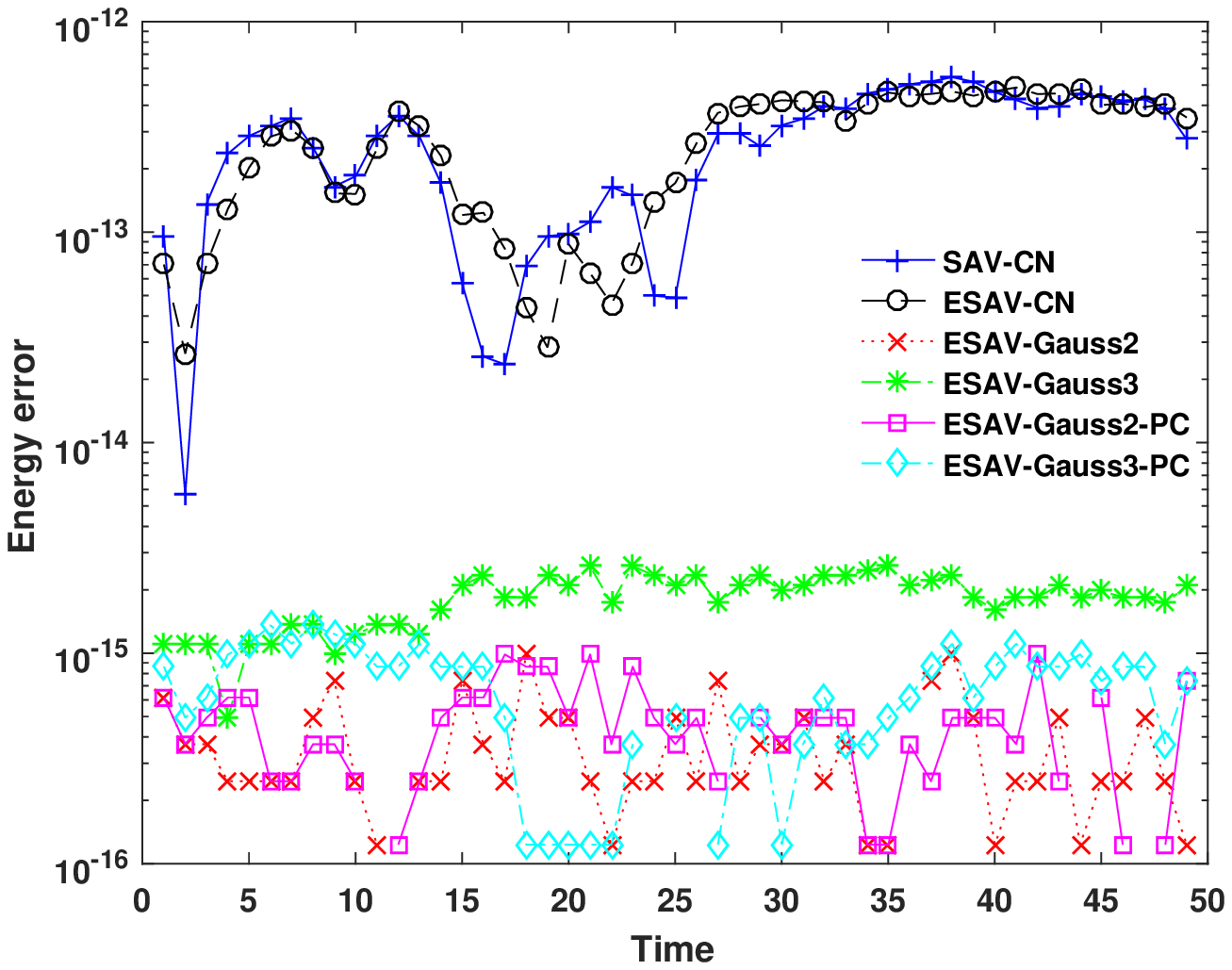}
\caption{Long-time energy errors of the proposed schemes for the circular ring soliton~(left)~and collision of four circular solitons~(right)~with $N_x=N_y=128$ and $\tau=0.01$ until $T=50$.}\label{Fig-8}
\end{figure}

\noindent \textbf{Example 5.4.}~(The KdV equation) This example is obtained from the KdV equation
\begin{equation}\label{eq-5-8}
u_t+\alpha u_{xxx}+\beta uu_x=0,\quad (x,y)\in\Omega\in\mathcal{R}^1,\ t\in (0,T]
\end{equation} 
with $\beta=1$ and periodic boundary conditions. The initial condition is calculated from the following exact solution of this problem by setting $t=0$. The two different cases are as follows:
\begin{itemize}
\item one-soliton wave solution \cite{luigi-19-HBVM-KDV-JCAM}~(see Fig. \ref{Fig-9}): 
\begin{equation}\label{eq-5-9}
u(x,t)= 3\gamma\bigg[\mbox{sech}\left(\sqrt{\frac{\gamma}{4\alpha}}(x-\gamma t)_\Omega\right)\bigg]^2, \quad \Omega=[x_R,x_L],~\alpha=0.0013020833,~\gamma=\frac{1}{3},
\end{equation}
where the notation
\begin{equation}\label{eq-5-10}
(\theta)_\Omega=
\left\{\aligned
&x_L-\mbox{Rem}\left(x_L-\theta,x_L-x_R\right),\quad \mbox{if}\ \theta<x_R,\\
&\theta,\quad\quad\quad\quad\quad\quad\quad\quad\quad\quad\quad\quad\quad\mbox{if}\ \theta\in\Omega,\\
&x_R+\mbox{Rem}\left(\theta-x_R,x_L-x_R\right), \quad\mbox{if}\ \theta>x_L.
\endaligned\right.
\end{equation}
Here, Rem denotes the remainder of the integer division of two parameters. The spatial region $\Omega=[-3,5]$ is numerically solved by the proposed methods.
\item two-soliton waves solution \cite{luigi-19-HBVM-KDV-JCAM}~(see Fig. \ref{Fig-10}): 
\begin{equation}\label{eq-5-11}
u(x,t)=12\frac{k_1^2e^{\xi_1}+k_2^2e^{\xi_2}+2(k_2-k_1)^2e^{\xi_1+\xi_2}+\rho^2(k_2^2e^{\xi_1}+k_1^2e^{\xi_2})e^{\xi_1+\xi_2}}{\left(1+e^{\xi_1}+e^{\xi_2}+\rho^2e^{\xi_1+\xi_2}\right)^2},\quad \alpha=1,
\end{equation}
where we choose the parameter 
\begin{align}\label{eq-5-12}
\aligned
&k_1=0.4,\ k_2=0.6,\ \rho=(k_1-k_2)/(k_1+k_2)=-0.2,\\
&\xi_1=k_1x-k_1^3t+4,\ \xi_2=k_2x-k_2^3t+15
\endaligned
\end{align}
and take $\Omega=[-40,40]$ as a periodic region.
\end{itemize}

The one-soliton wave \eqref{eq-5-9} is periodic in time \cite{luigi-19-HBVM-KDV-JCAM}, and the period is the time interval $T=24$ we calculated. As is plotted in Fig. \ref{Fig-9}, the waveform of the one-soliton is well conserved, and the numerical solution is very consistent with the the exact solution \eqref{eq-5-9}. Similar numerical behaviors can also be obtained from the simulation of two-soliton waves \eqref{eq-5-11}. In particular, in Fig. \ref{Fig-10}, the two-soliton waves that include a taller one and a lower one gradually approach one another slowly with moving towards right. When $t=80$, they collide and continue moving away from each other. In Fig. \ref{Fig-11}, the numerical Hamiltonian turns out to be preserved by all the schemes. Therefore, this also makes clear that the collision of the two-soliton waves is approximated to machine precision.
\begin{figure}[H]
\centering
\includegraphics[width=0.42\linewidth]{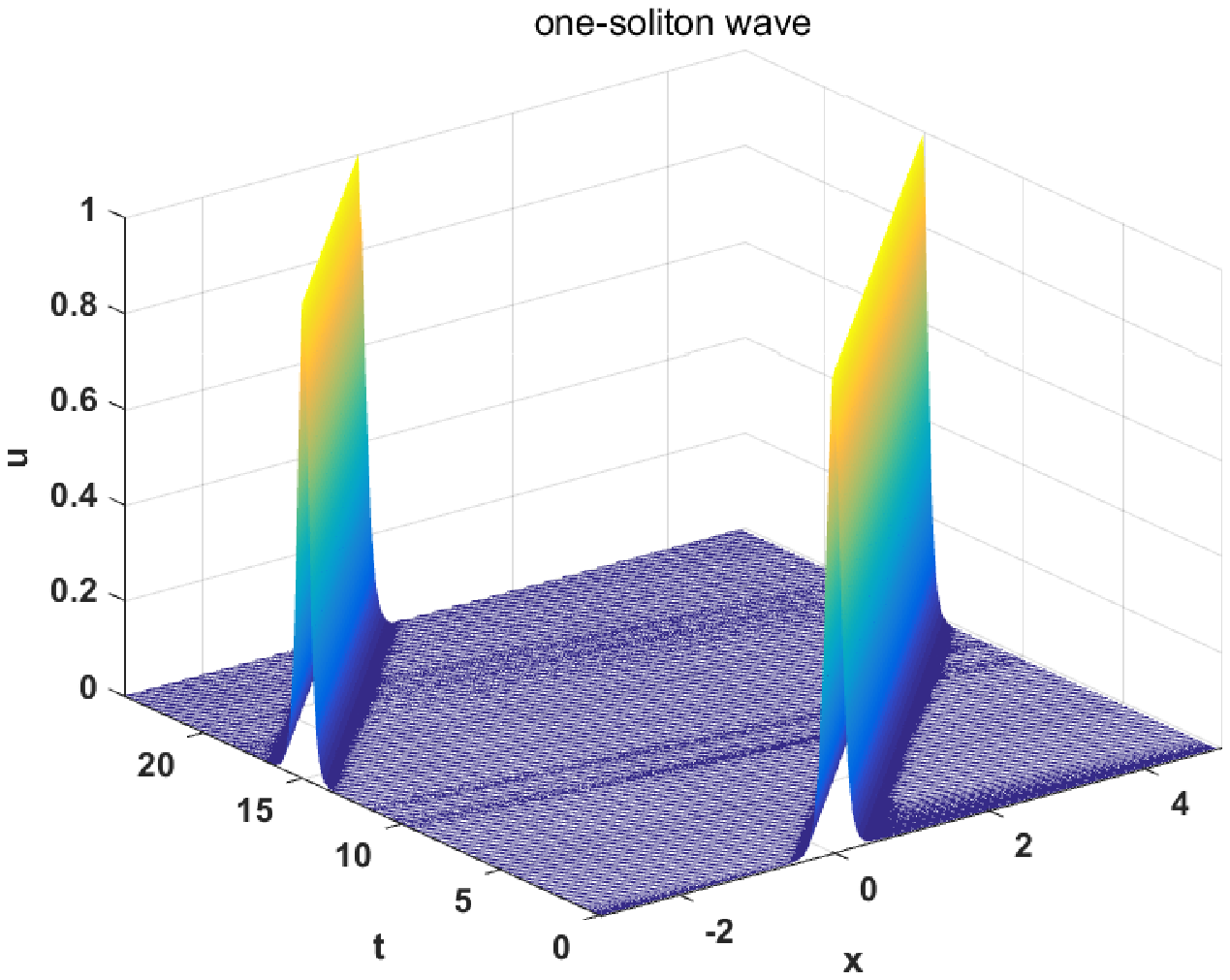}\hspace{-3.5mm}
\includegraphics[width=0.42\linewidth]{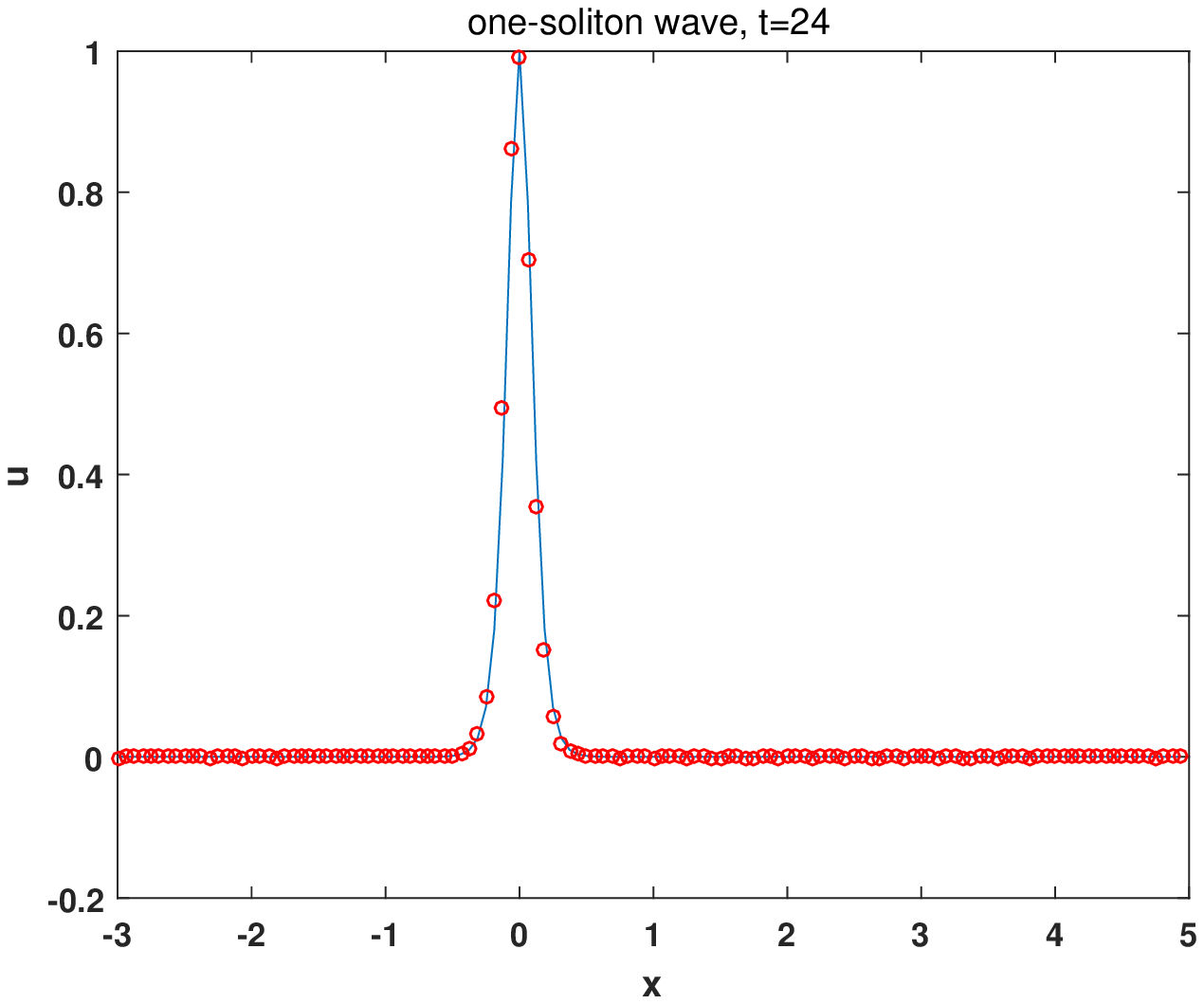}
\caption{The one-soliton wave of \textbf{ESAV-CN} with $N_x=N_y=128$ and $\tau=0.01$. The blue solid line is the exact solution, and the red circle is the numerical solutions.}\label{Fig-9}
\end{figure}

\begin{figure}[H]
\centering
\includegraphics[width=0.32\linewidth]{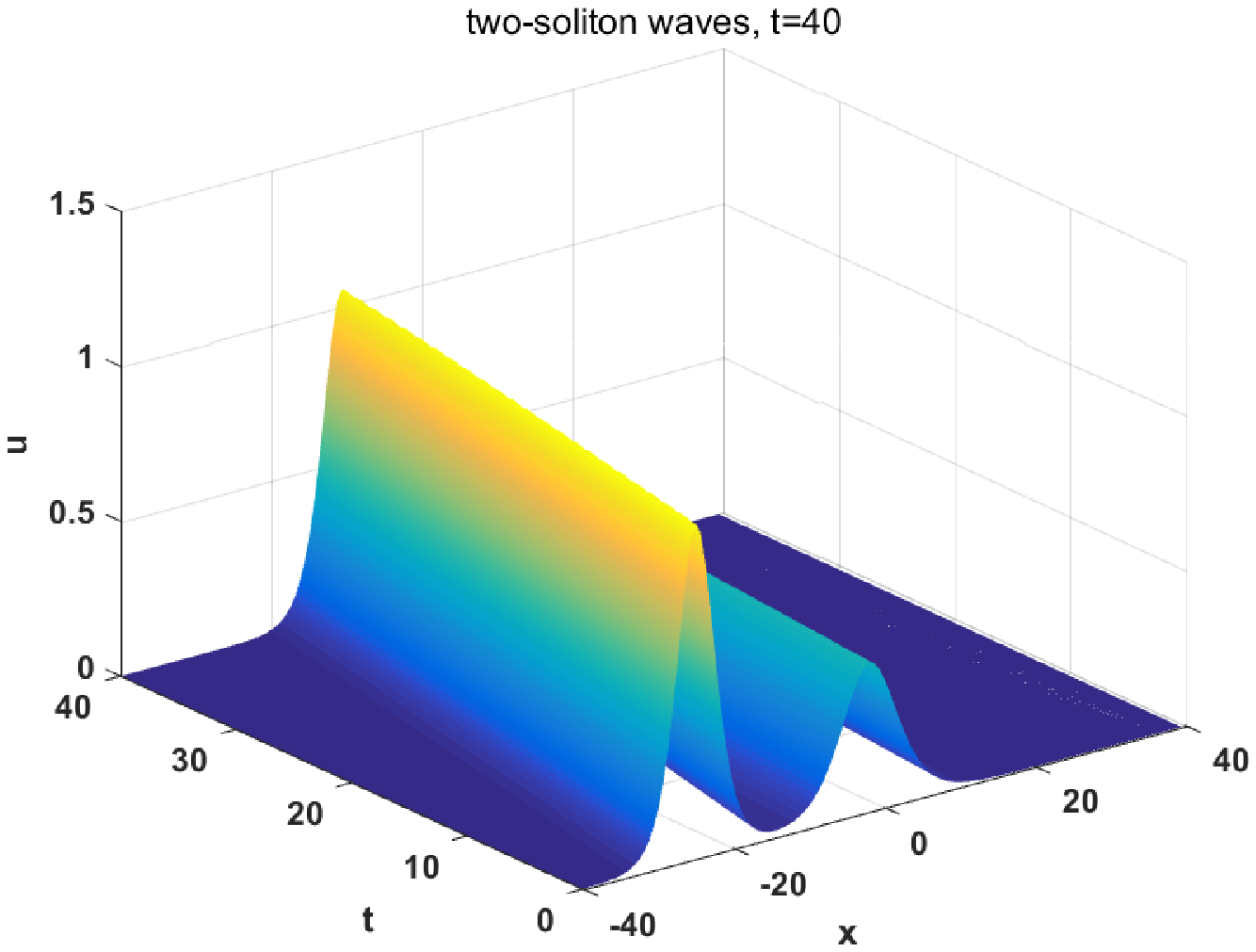}\hspace{-3.5mm}
\includegraphics[width=0.32\linewidth]{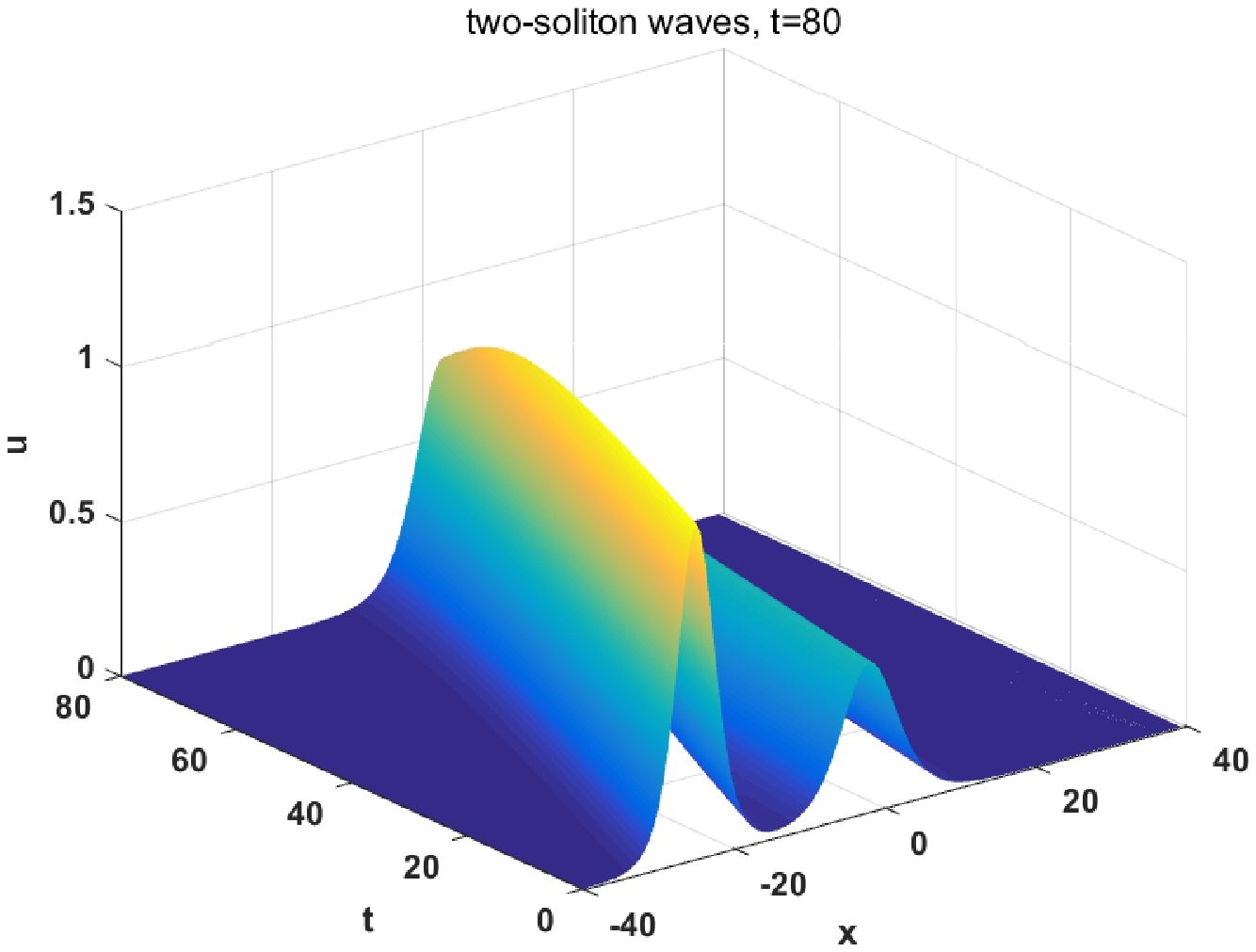}\hspace{-3.5mm}
\includegraphics[width=0.32\linewidth]{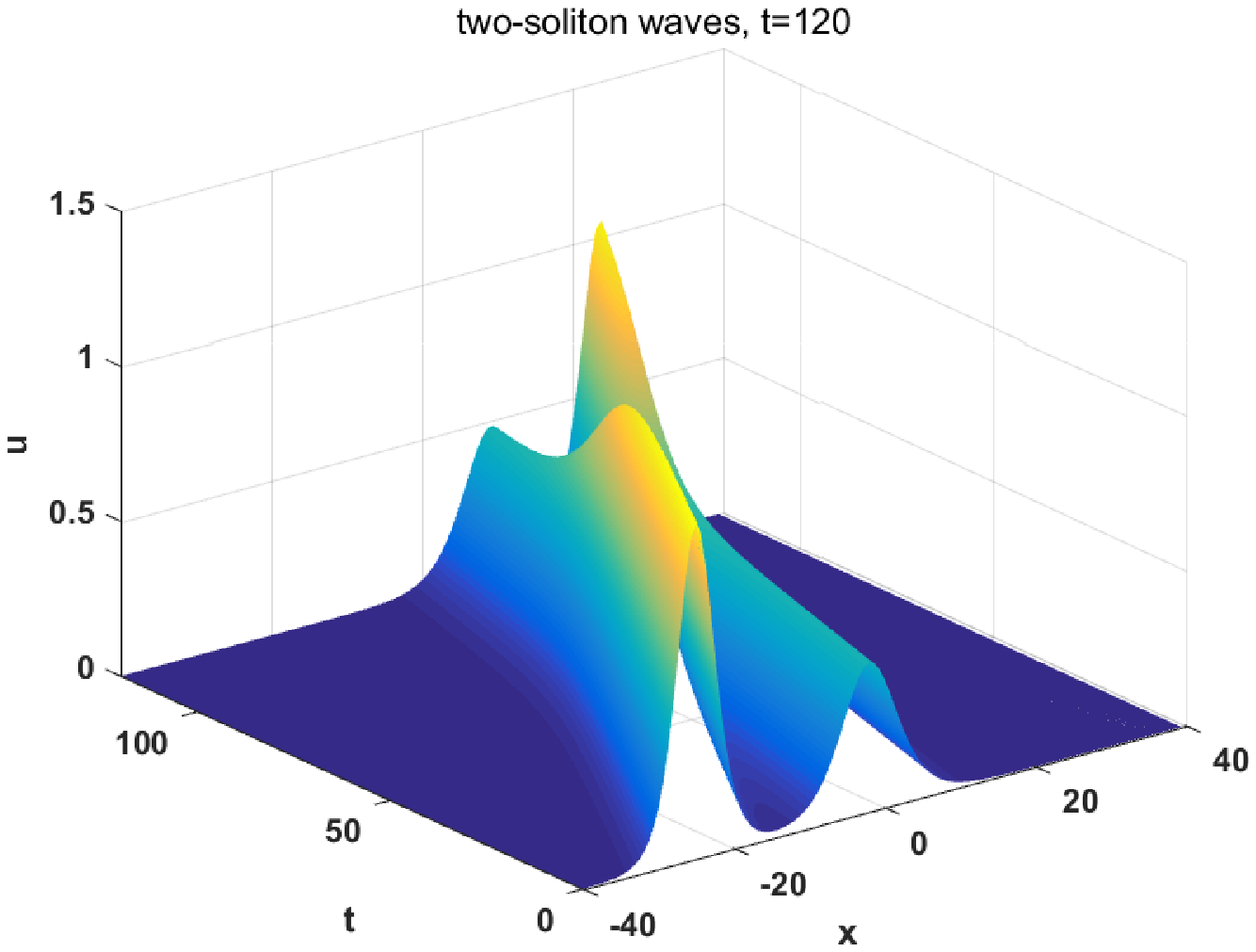}\\
\includegraphics[width=0.32\linewidth]{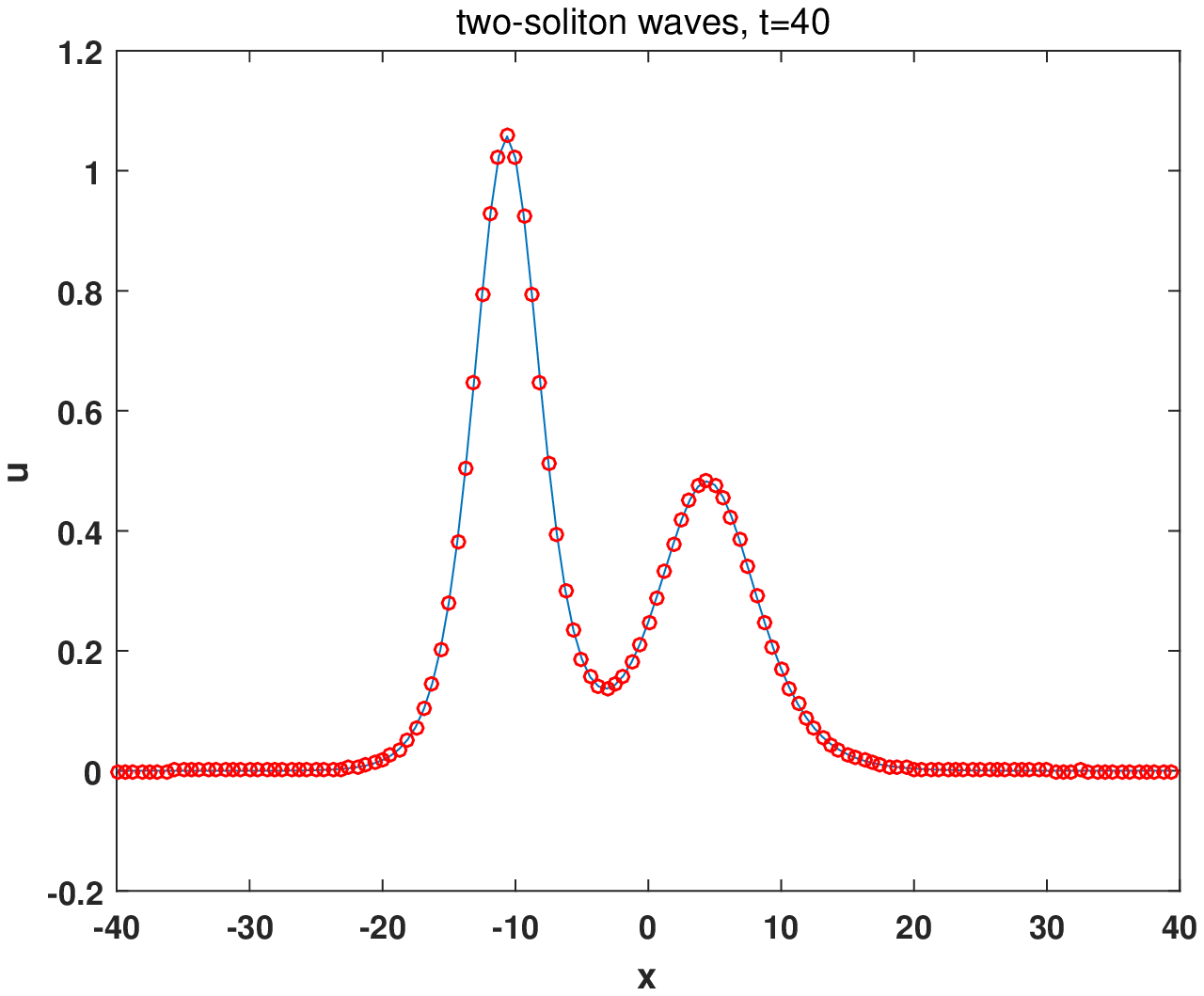}\hspace{-3.5mm}
\includegraphics[width=0.32\linewidth]{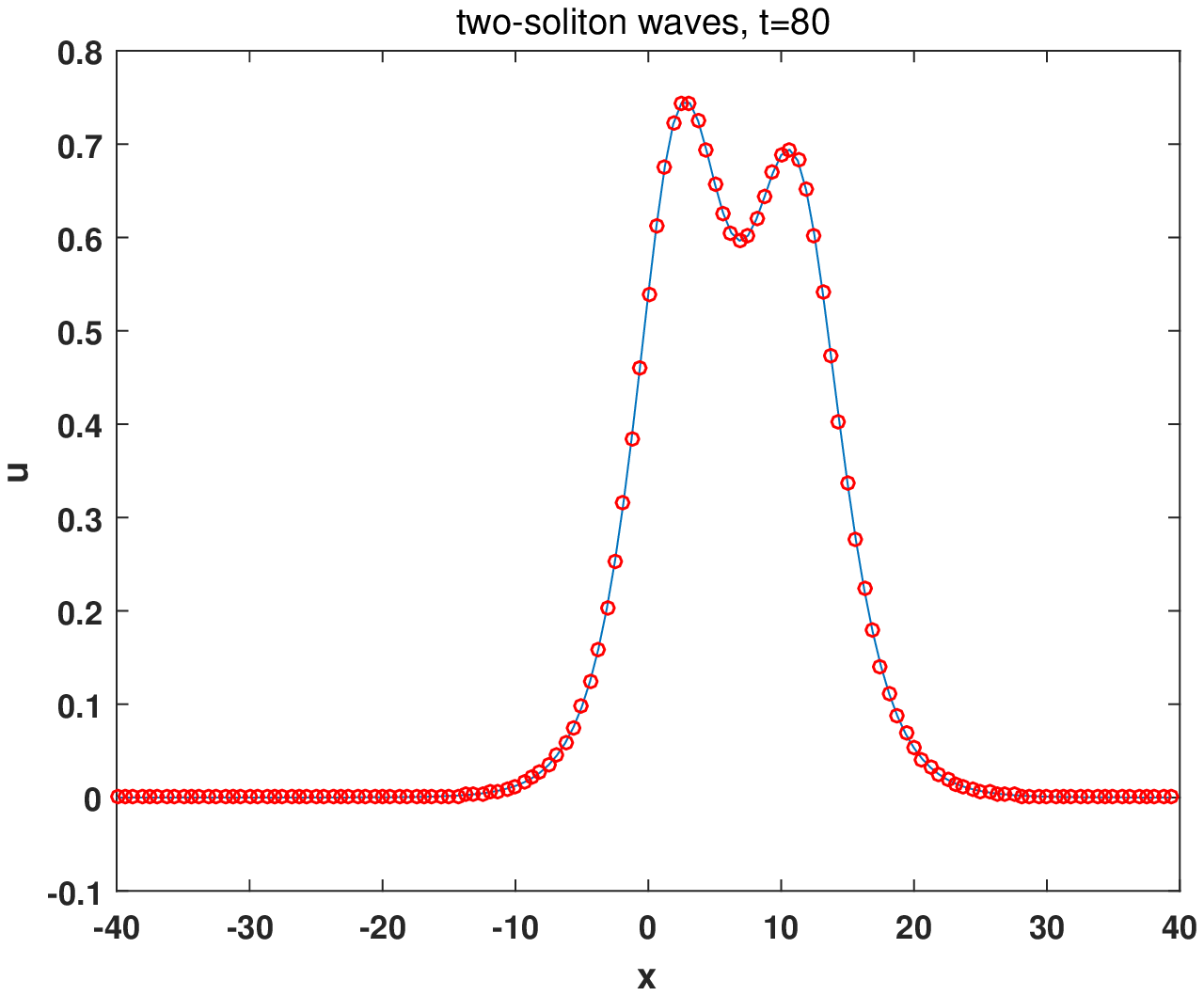}\hspace{-3.5mm}
\includegraphics[width=0.32\linewidth]{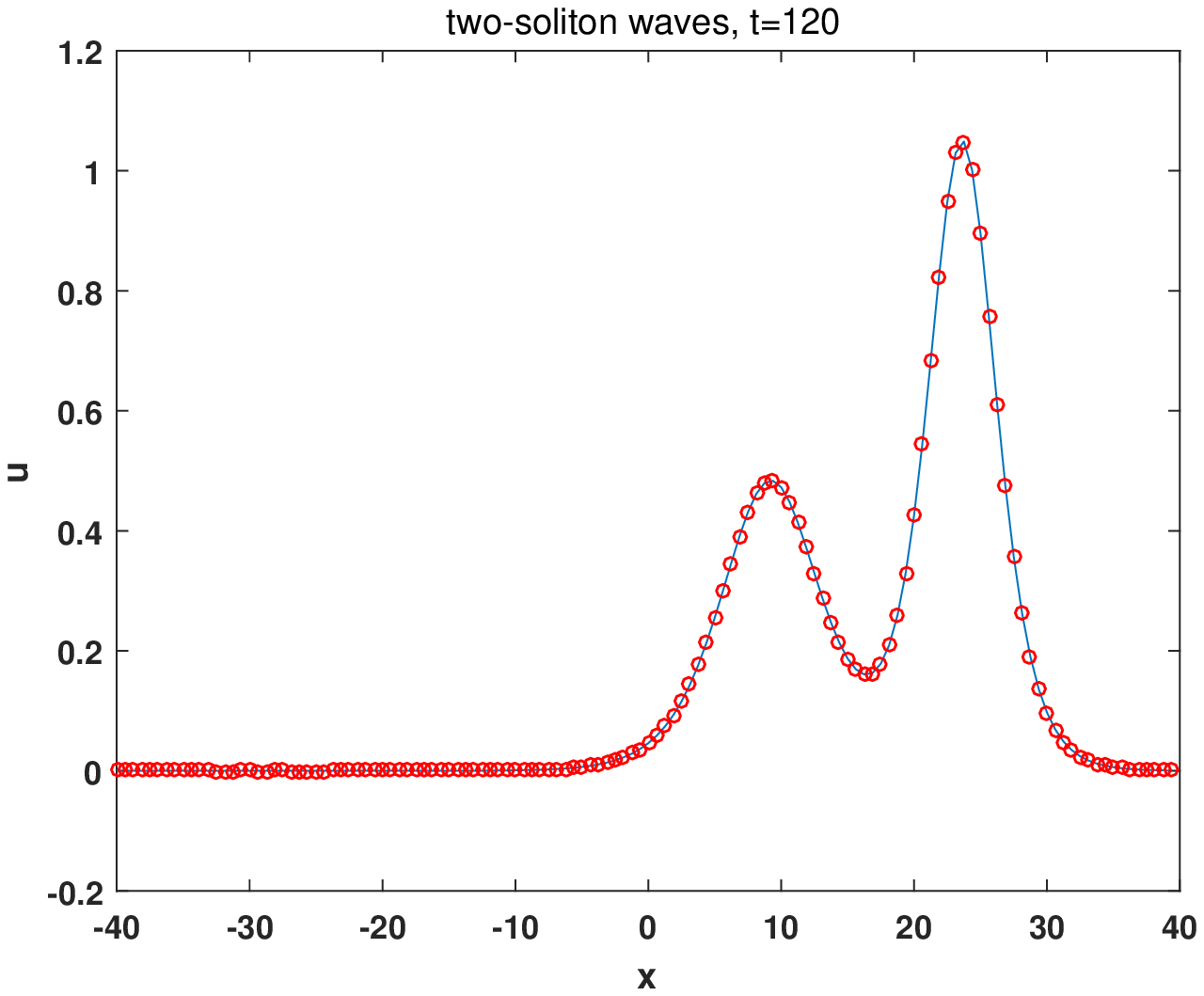}	
\caption{The two-soliton waves of \textbf{ESAV-CN} with $N_x=N_y=128$ and $\tau=0.01$. The blue solid line is the exact solution, and the red circle is the numerical solutions.}\label{Fig-10}
\end{figure}

\begin{figure}[H]
\centering
\includegraphics[width=0.42\linewidth]{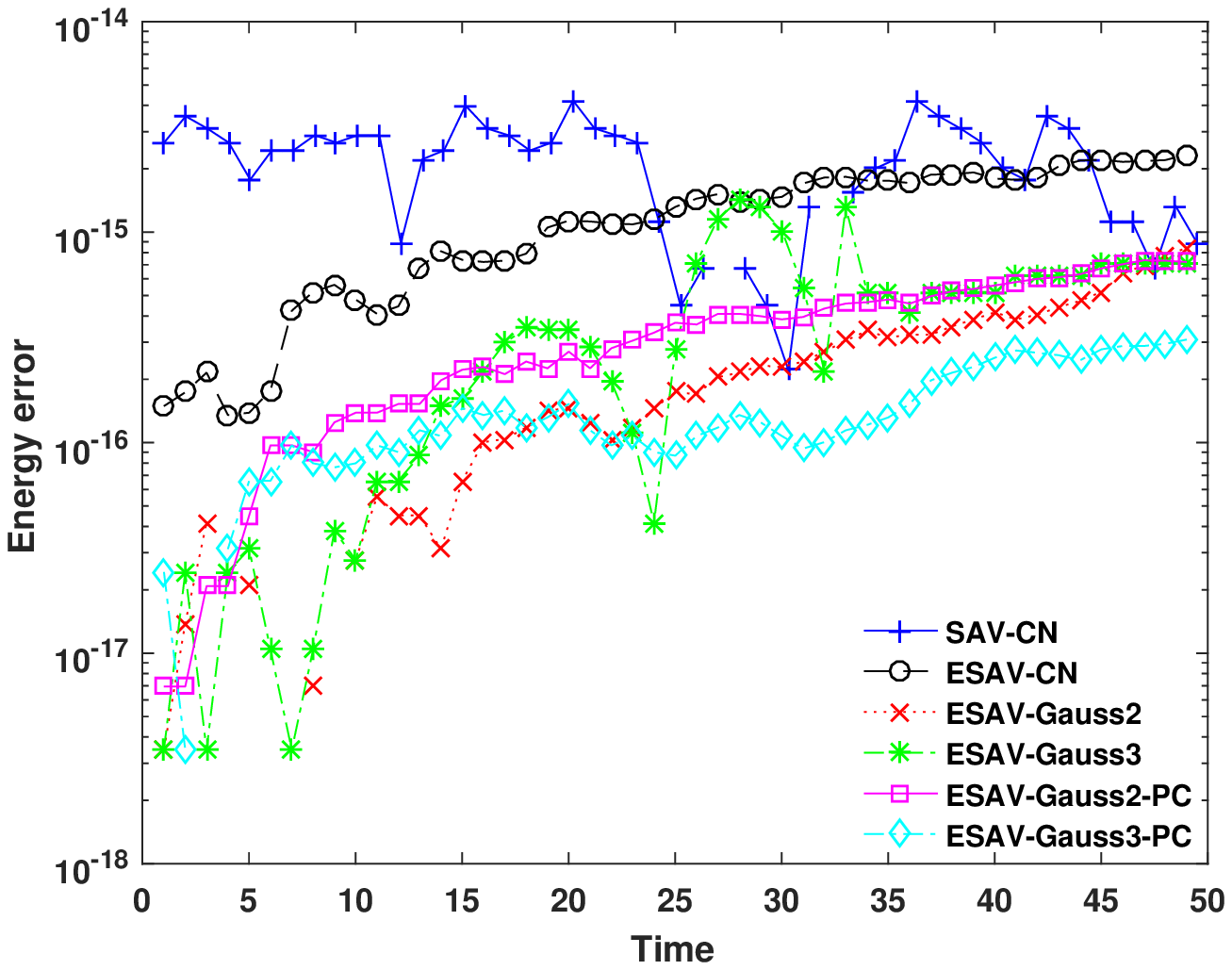}\hspace{-3.5mm}
\includegraphics[width=0.42\linewidth]{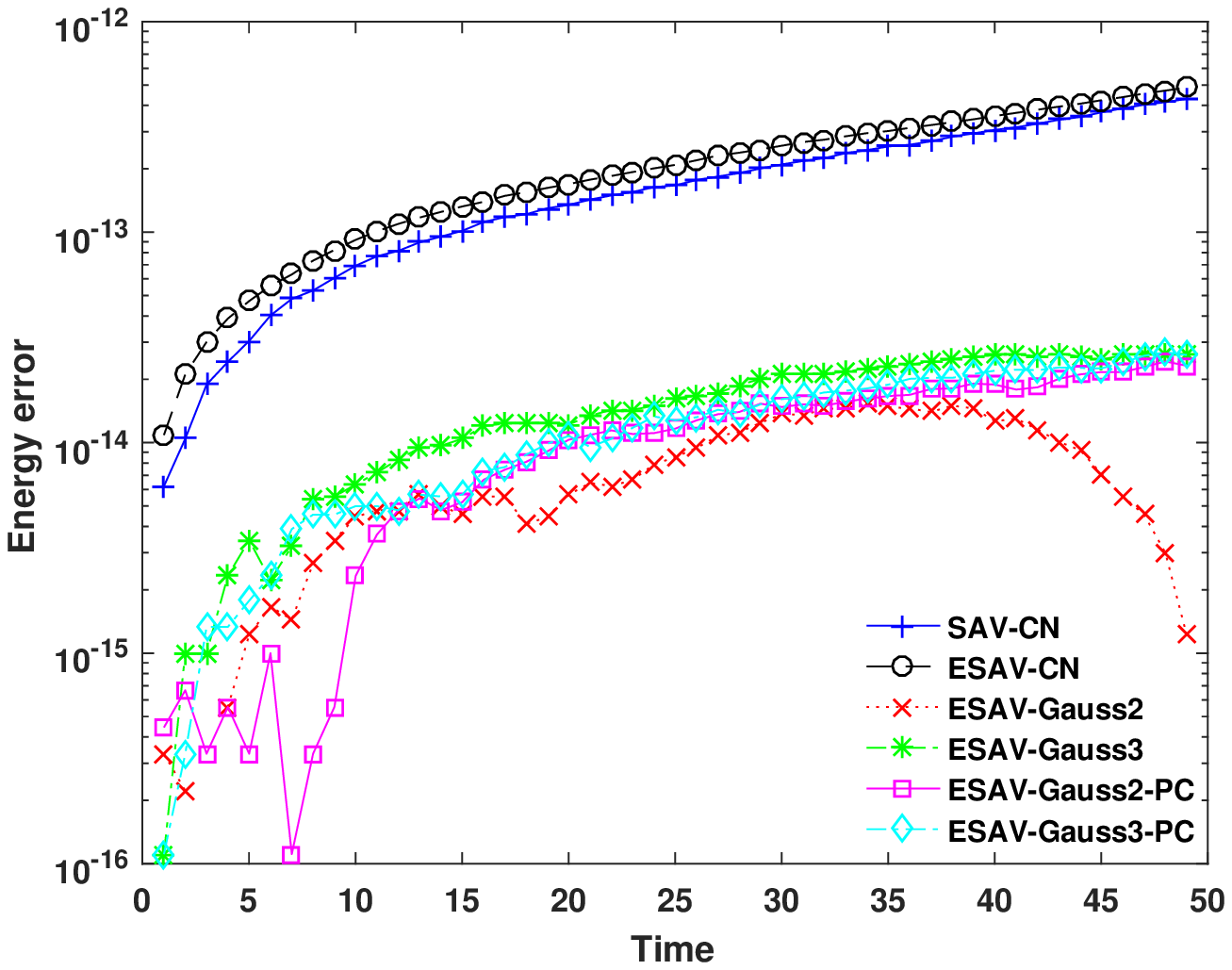}
\caption{Long-time energy errors of the proposed schemes for the one-soliton~(left)~and the two-soliton waves~(right)~with $N_x=N_y=128$ and $\tau=0.01$ until $T=50$.}\label{Fig-11}
\end{figure}

\section{Concluding remarks}\label{Sec-6}
In this paper, we systematically construct linearly implicit energy-preserving schemes with arbitrary order for Hamiltonian PDEs. The basic idea is to combine the newly proposed ESAV approach and the symplectic RK method, as well as an extrapolation strategy. The solution variables and the introduced auxiliary variable are totally decoupled in the implementation, and no extra inner products have to be added. When the pseudo-spectral method is used for spatial discretization, the resulting ESAV schemes are completely explicit so that the practical computation is more efficient than the classical SAV schemes. Numerical experiments are provided for three specific Hamiltonian PDEs to demonstrate the superior behaviors. 

\section*{Acknowledgements}
\noindent This work is supported by the National Key Research and Development Project of China (Grant No. 2018YFC1504205), the National Natural Science Foundation of China (Grant No. 11771213, 11971242), the Major Projects of Natural Sciences of University in Jiangsu Province of China (Grant No. 18KJA110003) and the Priority Academic Program Development of Jiangsu Higher Education Institutions.

\bibliographystyle{abbrv}
\bibliography{references}
\end{document}